\documentclass{article}
\usepackage[utf8]{inputenc}
\usepackage[margin = 1in]{geometry}
\usepackage{xparse}
\usepackage{amsmath}
\usepackage{amssymb}
\usepackage{amsfonts}
\usepackage{graphicx}
\usepackage{diagbox}
\usepackage{color}
\usepackage{multicol}
\usepackage{epstopdf}
\usepackage{bm}
\usepackage{booktabs}
\usepackage[sort&compress]{natbib}

\usepackage[labelformat=simple]{subcaption}

\graphicspath{{figures/}}
\usepackage[hidelinks]{hyperref}
\usepackage{amsthm}

\usepackage[format=plain,
            labelfont={it},
            textfont=it]{caption}

\newtheorem{proposition}{Proposition}

\newtheorem{lemma}{Lemma}

\usepackage[title]{appendix}

\renewcommand\theequation{\thesection.\arabic{equation}}

\renewcommand{\theequation}{\arabic{section}.\arabic{equation}}

\def\pw{0.13\textwidth}

\newcommand{\bsub}{\begin{subequations}}
\newcommand{\esub}{\end{subequations}$\!$}

\newcommand{\eps}{\varepsilon}
\newcommand{\Om}{\Omega}
\newcommand{\Gam}{\Gamma}
\newcommand{\x}{\mathbf{x}}
\newcommand{\K}{\mathcal{K}}
\newcommand{\y}{\mathbf{y}}
\newcommand{\X}{\mathbf{X}}
\newcommand{\T}{\mathbf{T}}
\newcommand{\MI}{\mathbf{M}_1}
\newcommand{\MII}{\mathbf{M}_2}

\newcommand{\dd}{\,\mathrm{d}}
\newcommand{\GN}{G}
\newcommand{\bxi}{\bm{\xi}}

\DeclareMathOperator{\dist}{dist}

\newcommand{\stacksum}{\sum_{\stackrel{i=1}{i\neq j}}^{N}}

\def\B{{\mathcal B}}
\def\pa{\partial{\mathcal B}}
\def\R{\mathbb{R}}
\def\C{\mathbb{C}}
\def\D{{\mathcal D}}

\title{ The Mean First-Reaction Time on a Thin Filament in a 3-D Domain}

\numberwithin{equation}{section}

\begin{document}

\author{Denis S. Grebenkov \thanks{Laboratoire de Physique de la Mati\'{e}re
    Condens\'{e}e, CNRS -- Ecole Polytechnique, Institut Polytechnique
    de Paris, 91120 Palaiseau, France. Email:
    {\tt denis.grebenkov@polytechnique.edu}} , \and
    Theodore Kolokolnikov \thanks{Dept. of Mathematics, Dalhousie University,
    Halifax, Nova Scotia, B3H 4R2, Canada. Email: {\tt
      tkolokol@mathstat.dal.ca}} , \and
    Alan E. Lindsay \thanks{NSF-Simons National Institute for Theory and Mathematics in Biology, Chicago IL.}  
    \thanks{Dept. of Applied and Computational Mathematics
    and Statistics, University of Notre Dame, Notre Dame, IN, 46556,
    USA.  Email: {\tt a.lindsay@nd.edu}} , \and
 Michael J. Ward \thanks{Department of Mathematics,  University of British Columbia, 
 Vancouver, British Columbia, V6T 1Z2, Canada. Email: {\tt ward@math.ubc.ca}
 (corresponding author)}} \date{\today}

\maketitle

\begin{abstract}
We study the mean first-reaction time (MFRT) of a Brownian particle in
a bounded three-dimensional domain $\Omega$ with a reflecting boundary
$\partial\Omega$, that contains a thin reactive filament. The filament
is a tube of cross-sectional diameter $\eps\ll 1$ around a smooth
closed curve $\Gamma$, and its surface is partially reactive with
reactivity $b$. As $\eps\to 0$ the filament reduces to a line source
in the outer problem. Its cross-section enters only through a
logarithmic reactive capacitance $d_c(b)$, so that the MFRT is
$\mathcal{O}(-\log[\eps d_c])$. This contrasts with the
$\mathcal{O}(\eps^{-1})$ scaling for spatially localized traps of
radius $\eps$.  Using matched asymptotic expansions, we derive a
Fredholm integral equation on $\Gamma$ for the line density of the
reactive flux. It sums all powers of $\nu\equiv -1/\log[\eps d_c]$ and
involves two geometric quantities: the regular part $R$ of the Neumann
Green's function, which encodes the interaction of $\Gamma$ with
$\partial\Omega$, and an intrinsic self-energy of $\Gamma$ that
depends only on its shape. The framework extends to multiple
filaments, splitting probabilities, a spatially heterogeneous
diffusivity, and the principal eigenvalue of the Laplacian. For the
unit sphere we solve the integral equation with a simple trapezoidal
Nystr\"om scheme, and we validate the predictions for rings, linked
rings and torus knots against full 3-D finite-element solutions. For
an equatorial ring we obtain the MFRT in closed form and determine the
optimal ring radius. We compare it with the optimal radius for $N$
small targets equally spaced on a ring. Finally, we show that
fragmenting a filament into $N$ small microtraps is asymptotically
equivalent to a continuous filament with an effective radius
determined by a periodic capacitance problem. This result connects the
logarithmic scaling of filaments to the algebraic scaling of
point-like traps.
\end{abstract}

\noindent Key words: Matched asymptotic expansions, filament, Neumann
Green's function, mean first-reaction time, intrinsic self-energy,
integral equation, logarithmic reactive capacitance.

\baselineskip=16pt 

\section{Introduction} \label{section:1}

\color{black} The first time a diffusing particle or agent hits a
reactive target is a fundamental quantity with applications in ecology, cell biology and physics
\cite{CHOU2014,redner2001guide,Grebenkov2024TargetSearch}. In
a cellular setting, the diffusing agent may be a signaling molecule
that is transported by diffusion to a reactive site (e.g. receptor) to
complete or initiate a biological event \cite{Fok2009, Lawley2023,
  BL2018,BressloffSchumm2022,Bressloff2025}. In an ecological
setting, first passage time theory is an essential tool for
characterizing the time needed for individuals to find a mate, food or
refuge.

In many of these examples the reactive sites are small regions centered on fixed separated points. Examples include
receptors on the cell membrane \cite{Miles2020,Coombs2009} or
trafficking by nuclear pore complexes, the small apertures on the
nucleus whose role is to transport cargo between the cytoplasm and
nucleoplasm \cite{BL2025,Holcman2004a,Holcman2014}. 
 When the target regions are small balls centered around fixed
points, an extensive literature determining the arrival
statistics of diffusing molecules exists
\cite{Bressloff2024_JMBIO,Pillay2010,LTK2015,KTW,Kurella2015,Lawley2023,BL2018}.

In this work we develop an extension of this theory to a class of
important biological structures characterized by elongated, thin
targets. Prominent examples include microtubule filaments,
actin fibers and strands of DNA
\cite{Mlynarczyk2019,Ciocanel2017,NewbyBressloff2009,NewbyBressloff2013,Maxian2024}. Related studies of ecological-scale transport have examined how
arrival statistics are influenced by linear features such as roads,
utility lines or fire breaks \cite{McKenzie2012,HDL2025,DML2026}. In 3-D problems in cell
biology, these filamentous structures provide essential functionality by
modulating search and signaling processes
\cite{Chatterjee2023,Kim2022PRE}. The goal of this paper is to present
an asymptotic analysis of the 3-D mean first-reaction time of a
diffusing molecule to thin closed filaments in the limit of vanishing
width. Before stating our main results, we describe the geometric
scenario under consideration.

We let $\Om\subset\mathbb{R}^3$ be a bounded domain with smooth
reflecting boundary $\partial\Om$, and we let $\Gam\subset\Om$ be a
smooth simple closed curve parameterized in terms of arclength $s$ as
$\X(s)$, where $s\in[0,L)$ and $L=|\Gam|$ is the length of the curve
(see Fig.~\ref{fig:intro_schem}).
\begin{figure}
    \centering
    \includegraphics[width = 0.95\textwidth]{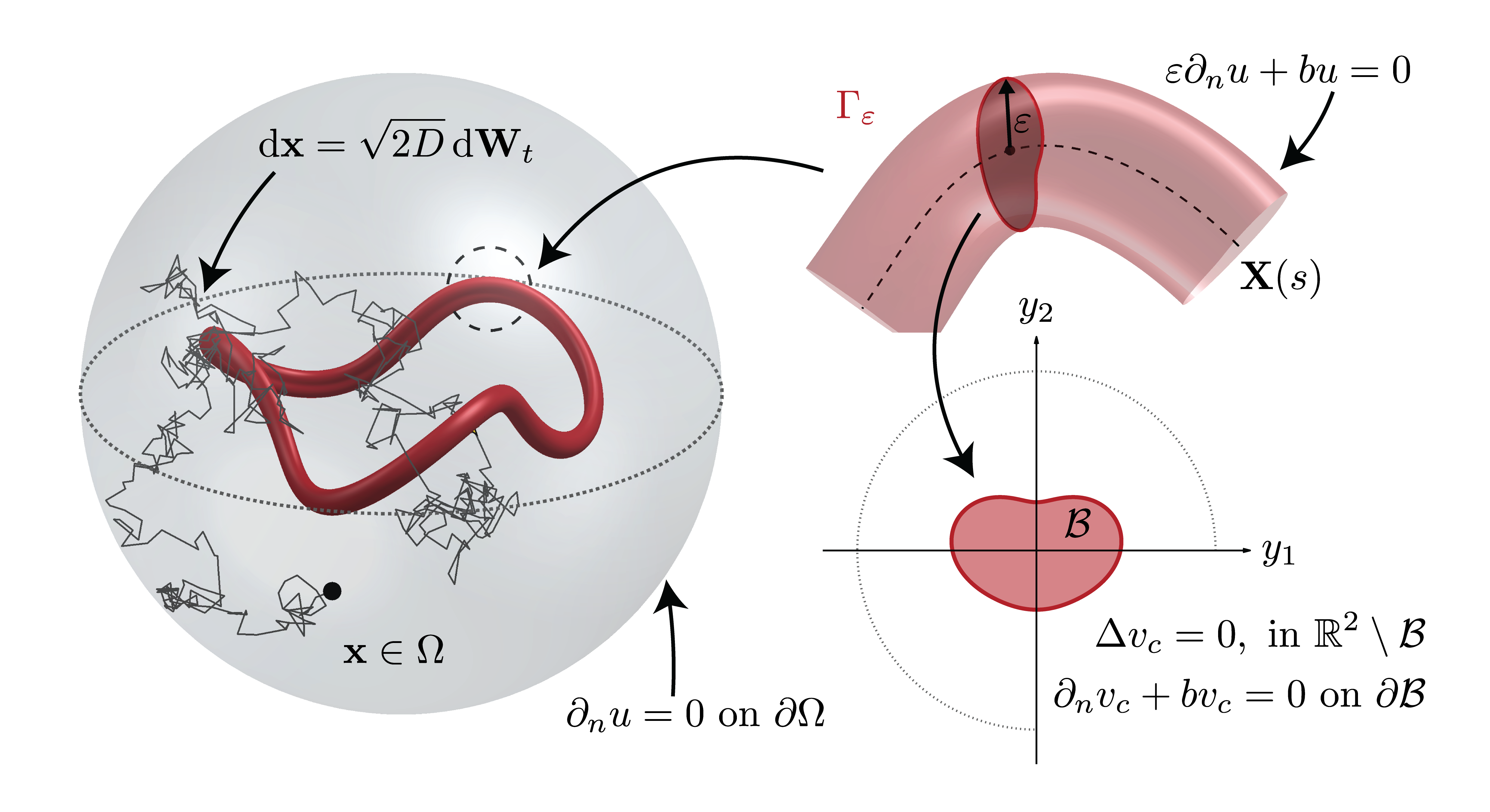}
    \caption{ Illustration of the considered first-reaction problem
      of a 3-D Brownian particle on a
      thin closed filament $\Gamma_{\eps}$. The enclosing region
      $\Omega\subset\mathbb{R}^3$ has reflecting boundary conditions
      on $\partial\Omega$. The centerline of the filament is given by
       a simple curve $\textbf{X}(s)$ and the fixed cross-sectional rescaled profile
      is given by $\mathcal{B}$.\label{fig:intro_schem}}
\end{figure}
The curve is assumed to lie completely within $\Om$ in the sense that
$\dist(\Gamma,\partial\Omega)={\mathcal O}(1)$. Around $\Gam$ we place
a reactive thin filament of diameter ${\mathcal O}(\eps)$ with a
specified cross-sectional shape ${\mathcal B}_\eps$. We assume the filament is well separated from itself, in the sense that if two points $\mathbf{X}(s)$ and $\mathbf{X}(s')$ are separated by an $\mathcal{O}(1)$ arclength along $\Gamma$, then $|\mathbf{X}(s)-\mathbf{X}(s')| = \mathcal{O}(1)$. To characterize
the tubular geometry around $\Gam$ we introduce the so-called Bishop
frame \cite{bishop1975,hester2023} which is used extensively in slender body theory \cite{ohm2025hierarchybloodvesselmodels,Ohm2024,MOS2019,Koens_Lauga_2018}. We let $\T(s)=\X^{\prime}(s)$
be the unit tangent vector of the centerline and we define two unit
vectors $\MI(s)$ and $\MII(s)$ in the plane orthogonal to $\T(s)$, so that
$\lbrace{\T(s),\MI(s),\MII(s)\rbrace}$ form an orthonormal basis. To
ensure that the reference frame along the curve does not twist, we
enforce that $\MI^{\prime}(s){\bf \cdot} \MII(s)=0$ and
$\MII^{\prime}(s){\bf \cdot} \MI(s)=0$. The reactive filament is
locally defined by
\begin{equation}
  \Gam_\eps= \left\{ \X(s) + \eta_1\MI(s) + \eta_2\MII(s) \in
  \mathbb{R}^3 \;\middle\vert{}\; s \in [0, L), \; (\eta_1, \eta_2) \in
  {\mathcal B}_\eps \right\}\,. \label{intro:tube}
\end{equation}
The mean first-reaction time (MFRT) $u(\x)$ of a particle, initially
at position $\x\in\Omega\setminus\Gamma_{\eps}$, undergoing Brownian
motion with diffusivity $D$, with a reactive filament $\Gamma_{\eps}$
satisfies
\begin{equation}\label{intro:mfrt}
\Delta_{\x} u=-\frac{1}{D} \,, \quad \x\in \Om\setminus\Gam_\eps\,, \qquad
\eps \partial_n u + b u =0 \,, \quad \x \in \partial\Gam_\eps\,,\qquad
\partial_n u=0\, ,\quad \x \in \partial\Om \,.
\end{equation}
Here $b>0$ is the reactivity of the filament. The strong reaction case
$b\gg1$ results in an instantaneous reaction when the particle reaches
$\Gamma_{\eps}$ while the weak reaction limit $b\ll1$ requires a
particle to spend a long time near $\Gamma_{\eps}$ for a reaction to
occur. The quantity $\partial_n$ denotes the outward normal
derivative.  The global MFRT (GMFRT) is defined as the spatially
averaged MFRT over a uniform distribution of the initial point,
\begin{equation}\label{intro:mfrt_ave}
  \bar{u} = \frac{1}{|\Om\setminus\Gam_\eps|} \int_{\Om\setminus\Gam_\eps}
  u(\x)  \dd \x \,.
\end{equation}

We will use the method of matched asymptotic expansions to analyze
\eqref{intro:mfrt} in the limit $\eps\to 0$ of small filament diameter and
determine $\bar{u}$. Our asymptotic analysis leads to a Fredholm
integral equation of the second kind for the strength of a line
singularity, which we refer to as the line density supported on
$\Gamma$, and a coupling condition to determine $\bar{u}$. The
derivation of our integral equation for the line density involves the
Neumann Green's function $\GN(\x;\bxi)$ in $\Om$, which for
$\bxi\in \Omega$ is given as the unique solution to
\bsub \label{intro:Gdef}
\begin{equation} 
    \Delta_{\x} \GN =
    \frac{1}{|\Om|}-\delta(\x-\bxi)\,, \quad \x\in \Omega \,;
    \qquad \partial_n\GN =0\,,\quad \x\in \partial\Omega \,;
    \qquad \int_\Om \GN(\x;\bxi) \dd\x =0 \,. \label{intro:Gdef1}
\end{equation}
It can be decomposed into a singular part and a smooth regular part
$R(\x;\bxi)$ as
\begin{equation}  \label{intro:Gdef2}
  \GN(\x;\bxi) =\frac{1}{4\pi|\x-\bxi|}+R(\x;\bxi)\,.
\end{equation}
\esub 
The Neumann Green's function is known analytically when $\Omega$ is a
sphere \cite{ChevWard2010} and can be computed numerically for general
shapes
\cite{lindsay20263D_GFun,chakraborty2025fastintegralmethodsneumann}. In
terms of this Neumann Green's function, in Sec.~\ref{sec2:asy} we use
inner-outer asymptotic matching to show that the line density and the
GMFRT $\bar{u}$ are determined in terms of the solution to a Fredholm
integral equation defined over the centerline $\Gam$ of the filament. 

As an example of our results, we derive in Sec.~\ref{sec:twoterm} the
behavior of the GMFRT for a single reactive filament to be
\begin{equation}\label{intro:prop:2term_chi} 
    \bar{u} \sim \frac{|\Om|}{2\pi D |\Gam|} \left(-\log[\eps d_c(b)] +
    \frac{2\pi}{|\Gam|}  \int_{\Gam} \int_{\Gam}
    R\left(\X(s); \X(s^{\prime}) \right) \dd s \dd s^{\prime} +
    \frac{1}{2|\Gam|} \int_{\Gam} {\mathcal E}(s) \dd s +
    {\mathcal O}(\nu)  \right) \,, \qquad \nu = \frac{-1}{\log[ \eps d_c]},
\end{equation}
as $\eps\to0$. Here  $d_c$ is the logarithmic reactive
capacitance, which is equal to $e^{-1/b}$ for a circular
cross-section. For general
cross-sectional profiles $\mathcal{B}$ and reactivities $b$, we provide in Appendix \ref{app:inner} an algorithm to calculate $d_c$. A notable
feature of \eqref{intro:prop:2term_chi} is the scaling law
$\bar{u}= \mathcal{O}(\log \eps)$ as $\eps\to0$ , which is the typical 2-D scaling law for point absorbers \cite{KTW}, and not
the $\bar{u}= \mathcal{O}(\eps^{-1})$ behavior associated with absorbers
centered around discrete points in 3-D domains \cite{ChevWard2010}. In
\eqref{intro:prop:2term_chi} the term $\int_{\Gam} \int_{\Gam}
R\left(\X(s); \X(s^{\prime}) \right) \dd s \dd s^{\prime}$ describes
the interaction of the filament with the domain $\Omega$ while
$\int_{\Gam} {\mathcal E}(s) \dd s$ describes the interaction of the
filament with itself through an \emph{intrinsic self-energy} term
${\mathcal E}(s)$ that we define in \eqref{sec2:intrinsic} and which
depends only on $\Gamma$.

Our approach parallels slender-body theory for potential and Stokes
flow \cite{HandelsmanKeller1967,Batchelor1970,KellerRubinow1976}, in
which a thin body is replaced by a line distribution of singularities
whose strength satisfies an integral equation with a local logarithmic
term and a nonlocal interaction along the centerline; see
\cite{MOS2019,Koens_Lauga_2018,Ohm2024} for its rigorous justification
and connection to boundary integral methods. We remark that in
\cite{Gjerde2019} and \cite{Gjerde2021} a finite element method was
used to numerically compute solutions to Poisson-type problems with
pre-specified line densities after analytically extracting an
appropriate singular behavior from the solution to be computed.

In Sec.~\ref{sec5:multiple} we extend the analysis to $N$ disjoint
filaments $\Gamma_{\eps,j}$ with centerlines $\Gamma_j$ of length
$L_j$, cross-sections $\mathcal{B}_j$ and reactivities $b_j$. Our main
result (Proposition~\ref{mprop:int_eq}) is that $\bar{u}\sim\chi$ and 
away
from the filaments, the solution of \eqref{intro:mfrt} is given by
\[u(\mathbf{x})\sim\chi-2\pi\sum_{j=1}^N\int_{\Gamma_j}A_j(s)\,
G(\mathbf{x};\mathbf{X}_j(s))\,\dd s, \qquad \dist(\x, \Gamma_j) \gg \eps, \quad j = 1,\ldots, N.\]
The line densities $A_j(s)$ on $\Gamma_j$ and the constant
$\chi$ solve the coupled system
\begin{equation}\label{intro:opsys}
  \mathcal{K}_j A_j + \sum_{\substack{i=1\\i\neq j} }^N\mathcal{G}_{ji}A_i = \chi\,,
  \quad j=1,\ldots,N\,;\qquad
  \sum_{j=1}^{N}\int_{\Gamma_j}A_j(s)\,\dd s=\frac{|\Omega|}{2\pi D}\,.
\end{equation}
The self-interaction operator $\mathcal{K}_j$ on $\Gamma_j$ and the
coupling operator $\mathcal{G}_{ji}$ between filaments $\Gamma_i$ and
$\Gamma_j$ are
\begin{subequations}\label{intro:ops}
\begin{align}
  (\mathcal{K}_jA)(s) &\equiv \Big(\frac{1}{\nu_j}+\frac{\mathcal{E}_j(s)}{2}\Big)A(s)
   -\frac12\int_{\Gamma_j}\frac{A(s)-A(s')}{|\mathbf{X}_j(s)-\mathbf{X}_j(s')|}\,\dd s'
   +2\pi\int_{\Gamma_j}A(s')\,R\big(\mathbf{X}_j(s);\mathbf{X}_j(s')\big)\,\dd s'\,,\\
  (\mathcal{G}_{ji}A)(s) &\equiv 2\pi\int_{\Gamma_i}A(s')\,G\big(\mathbf{X}_j(s);\mathbf{X}_i(s')\big)\,\dd s'\,,
\end{align}
\end{subequations}
with $\nu_j=-1/\log[\eps d_c(b_j)]$ and $\mathcal{E}_j$ the intrinsic
self-energy of $\Gamma_j$. The local term $1/\nu_j$ carries
information on the cross-sectional profile $\mathcal{B}_j$ and
reactivity $b_j$ and the self-energy terms are determined by the shape
of $\Gamma_j$. The terms involving $R$ and $G$ encode information on a
filament's interaction with the domain and with each other,
respectively. The line density $A_j(s)$ represents the local
flux arriving at the cross-section of the filament $j$ at $s$.  

The methods we develop here can be applied to several related problems
in first passage time theory. For example, the so called 
\emph{splitting probability} $\phi_k(\x)$ is defined as the
probability that a searcher starting at $\x$ is absorbed by filament
$k$ before any other and it has been used extensively in source
detection and signaling problems \cite{Miles2020,DOBRAMYSL2018,Lindsay2023a}. The
 splitting probabilities satisfy the elliptic problem,
\bsub\label{eq:intro_split-pde}
\begin{gather}
  \label{eq:intro_split-pde_a}  \Delta\phi_k=0\,, \quad
  \x\in\Omega\setminus \cup_{j=1}^{N} \Gamma_{\eps,j}\,;
  \qquad \partial_n\phi_k=0\,, \quad \x\in\partial\Omega\,,\\[5pt]
  \label{eq:intro_split-pde_b}  \eps\partial_n\phi_k+b_j\,(\phi_k-\delta_{jk})=0\,,
  \qquad
  \x\in\partial \Gamma_{\eps,j}\,, \quad j\in \lbrace{1,\dots,N\rbrace} \,,
\end{gather}
\esub 
with $\phi_k=\delta_{jk}$ on $\partial \Gamma_{\eps,j}$ when
$b_j=\infty$ and where $\delta_{jk}$ is the Kronecker symbol. In
Sec.~\ref{sec5:split}, we derive an asymptotic solution (Proposition \ref{prop:splitting}) to
\eqref{eq:intro_split-pde} in the limit as $\eps\to0$ in which the
solution is expressed in terms of densities supported over the
centerlines $\Gamma_j$ for $j = 1,\ldots, N$ which solve a system
related \eqref{intro:opsys} but with a modified right-hand side.

In Sec.~\ref{sec2:conc_ring} we analyze scenarios where the bounding
domain is a sphere and the filaments are 
arranged circular rings of radius $0<r_0<1$. In the case of a single
ring centered at the origin, or two rings arranged symmetrically on
the planes $z = \pm z_0$, we are able to solve the integral system in
closed form. In doing so, we identify GMFRT optimizing configurations
at finite $r_0$ values where $r_0$ is the radius of the rings.

In Sec.~\ref{sec4:numerics}, we describe an implementation of the
trapezoidal rule to discretize and solve the coupled integral equation
system \eqref{intro:opsys}. The accuracy of the expansions is
confirmed by full finite element solutions of the $\eps>0$ problems
\eqref{intro:mfrt} and \eqref{eq:intro_split-pde}. We show results on
several non-ring filaments and identify shielding behavior, in which distal
sections of highly undulating filaments exhibit locally reduced
capture rates.

In Sec.~\ref{sec5:extensions}, we demonstrate the potential for our
approach to extend to related problems in capture theory. One such
problem concerns the determination of the MFRT $u(\x)$ when the
environment has a spatially varying diffusivity $D(\x)>0$ (see
\cite{LawleyBMB2026}). In Sec.~\ref{sec5:varD}, we consider the
problem (assuming an It\^{o} interpretation)
\begin{equation}\label{intro:varD:mfrt}
\Delta u=-\frac{1}{D(\x)} \,, \quad \x\in \Om\setminus\Gam_\eps\,;\qquad 
\eps \partial_n u + b u =0 \,, \quad \x \in \partial\Gam_\eps\,;\qquad
\partial_n u=0\, ,  \quad \x \in \partial\Om \,.
\end{equation}
We determine in the limit as $\eps\to0$ a corresponding reduction of
\eqref{intro:varD:mfrt} to an integral equation supported on the
filament centerline. A second problem considered is the determination
of the principal eigenvalue $\lambda_0$ and eigenfunction $u_0(\x)$ of
the Laplacian for a domain with a reactive filament. These quantities
solve
\begin{equation}\label{Intro:eig:main}
\Delta u_0 + \lambda_0 u_0 =0 \,, \quad \x\in
\Om\setminus\Gam_\eps\,; \qquad\eps \partial_n u_0 + b u_0 =0 \,, \quad \x
\in \partial\Gam_\eps\,; \qquad \partial_n u_0=0 \,, \quad \x\in \partial\Om\,.
\end{equation}
The principal eigenvalue is simple (multiplicity $1$) and the
corresponding eigenfunction can be chosen positive. The survival
probability of the Brownian particle is proportional to
$e^{-D\lambda_0 t}$ and so $D\lambda_0 \propto 1/\bar{u}$. In
Sec.~\ref{sec5:eig} we derive a log-summed expansion of solutions to
\eqref{Intro:eig:main} which is accurate to all orders in $\nu =
-1/\log[\eps d_c]$. This result is a hybrid numerical asymptotic
solution formulated in terms of a nonlinear problem for $\lambda_0$
involving the associated Helmholtz Green's function. A two-term
expansion of $\lambda_0$ and the associated eigenfunction in terms of
$\nu$ is developed that gives an explicit, but 
slower converging expression involving the Neumann Green's
function \eqref{intro:Gdef}. 

In Sec.~\ref{sec6:ball-ring} we
investigate the fragmentation of a single absorbing filament into
multiple absorbing microtraps. We derive and solve a new periodic
capacitance problem, which captures the transition between the $\bar{u}
= \mathcal{O}(\log\eps)$ scaling behavior of the MFRT with an
absorbing filament and a $\bar{u}= \mathcal{O}(\sigma^{-1})$ scaling
regime of the MFRT with multiple point absorbers of radius
$\mathcal{O}(\sigma)$. Finally, in Sec.~\ref{sec7:discussion}, we
summarize results and discuss a few avenues for future study arising from
this work.

\section{Asymptotic reduction to an integral equation}\label{sec2:asy}

\subsection{Inner solution}

We first consider the  inner (or local) solution for
\eqref{intro:mfrt} near the filament. In terms of the tubular
coordinate system \eqref{intro:tube}, the Laplacian of an arbitrary
function $\Phi(\x)$ transforms to (see Appendix
\ref{app:inner} and
\cite{hester2023})
\begin{equation}\label{sec1:lapl1}
  \Delta_{\x} \Phi = \left(\partial_{\eta_1\eta_1} +
  \partial_{\eta_2\eta_2} \right) \Phi -\frac{\kappa_1}{h_{s}}
  \partial_{\eta_1}\Phi - \frac{\kappa_2}{h_{s}} \partial_{\eta_2}
  \Phi + \frac{1}{h_s^2} \partial_{ss} \Phi
  +\frac{1}{h_s^3} \left(\eta_1 \kappa_1^{\prime} + \eta_2
  \kappa_2^{\prime}\right) \partial_{s} \Phi \,,
\end{equation}
where the scale factor is $h_s=1-\kappa_1\eta_1-\kappa_2\eta_2$. Here
$\kappa_1=\kappa_1(s)$ and $\kappa_2=\kappa_2(s)$ are the curvatures
along each respective normal direction, defined by
$\MI^{\prime}(s)=-\kappa_1(s)\T(s)$ and
$\MII^{\prime}(s)=-\kappa_2(s)\T(s)$, where $\T(s)=\X^{\prime}(s)$ is
the unit tangent along the filament centerline. A sufficient condition for this local description to be  valid all along the filament is that
$\sqrt{\eta_1^2 + \eta_2^2} < {1/\kappa(s)}$ for $0\le s\le L$, where 
$\kappa(s)\equiv \sqrt{\kappa_1^2+\kappa_2^2} = |\X^{\prime\prime}(s)|$ is the curvature of the filament centerline.

Upon re-scaling the normal plane by $\eps$ by defining
$y_1={\eta_1/\eps}$ and $y_2={\eta_2/\eps}$, and labeling the inner
variable $\y=(y_1,y_2)^{T}$ and $|\y|=\rho\equiv
\sqrt{y_1^2+y_2^{2}}$, we use
\eqref{sec1:lapl1} to obtain that the MFRT in \eqref{intro:mfrt}
near the filament, written as $v(y_1,y_2,s)$ in the inner
variables, satisfies
\begin{equation}\label{sec1:lapl2}
  \Delta_{\x} u = \frac{1}{\eps^2} \Delta_{\y} v - \frac{\kappa_{1}}{\eps}
  v_{y_1} - \frac{\kappa_2}{\eps} v_{y_2} + {\mathcal O}(1) 
      = - \frac{1}{D} \,.
\end{equation} 
As a result, by expanding $v=v_0+\eps v_1+\ldots$, we obtain that the
local solution near the filament is a 2-D harmonic solution
$\Delta_{\y} v_0=0$.  In terms of the local variable $\y$, we denote
the  rescaled filament cross-section as
${\mathcal B}\equiv {{\mathcal B}_{\eps}/\eps}$.

The canonical inner problem associated with the filament cross-section
is, at each point $s$ on $\Gam$, the following exterior problem in
${\mathbb R}^2$ for $v_{c}(\y)$, which is made unique by
prescribing a unit logarithmic growth at infinity:
\begin{equation}\label{sec2:in_prob}
  \begin{split}
\Delta_\y v_c &=0\,, \quad \y\in {\mathbb R}^2\setminus {\mathcal B} \,,\\
\partial_{n} v_{c} + b v_{c} =0 \,, \quad \y\in \partial {\mathcal B}\,; &
\qquad v_{c} \sim \log|\y| - \log\left[ d_{c}(b)\right] + o(1) \,, \quad
\mbox{as} \quad |\y|\to \infty \,,
\end{split}
\end{equation}
where 
$\partial_n$ points into ${\mathcal B}$.  We refer to the constant
$d_{c}(b)$ as the {\em logarithmic reactive capacitance} of ${\mathcal B}$.

We remark that since the filament has a uniform cross-section, the
rescaled shape ${\mathcal B}$ and hence $d_{c}(b)$ is
independent of the position $s$ along the filament centerline. In
addition, since $d_{c}(b)$ is invariant under rotations of ${\mathcal
B}$, we can allow for the filament centerline $\X(s)$ to exhibit
twist, without influencing $d_{c}(b)$.

Only for a few special cases is $d_{c}(b)$ known analytically.
When ${\mathcal B}$ is the unit disk we have
$d_{c}(b)=e^{-{1/b}}$.  When $b=\infty$, $d_{c}(\infty)$ can
be determined in terms of the conformal map of ${\mathcal B}$ to the
unit disk. In Table 1 of \cite{Kurella2015} explicit formulas for
$d_{c}(\infty)$ are given for various simple choices of ${\mathcal
  B}$. However, no such results are typically available for finite
reactivities for non-circular shapes.  In Appendix \S
\ref{app:steklov} we provide a  methodology to numerically determine
$d_{c}(b)$ from a Steklov eigenfunction representation, as well as its
behavior for $b\ll 1$.

In terms of the canonical solution $v_c$, the leading-order inner
solution is taken as
\begin{equation}\label{sec2:v0_1}
  v_{0} = A(s) v_{c} \,,
\end{equation}
for some strength $A(s)$ that depends on the position along the
filament, and which must be determined by matching to the outer
solution.  In this way, the far-field behavior of the inner solution,
written in the outer variable $\y=\boldsymbol{\eta}{/\eps}$, is
\begin{equation}\label{sec2:inn_match}
  v = A(s) \left(
  \log\left(|{\boldsymbol{\eta}|/\eps}\right)-\log\left[d_{c}(b)\right] \right) + o(1) =
   A(s) \log|\boldsymbol{\eta}| + {A(s)/\nu} + o(1) \,, \quad \mbox{where} \quad
  \nu\equiv {-1/\log[\eps d_{c}(b)]} \,.
\end{equation}
By applying the 2-D divergence theorem on \eqref{sec2:in_prob} we
identify that
\begin{equation}\label{sec2:inn_A}
\quad  A(s) = -\frac{1}{2\pi}
\int_{\partial{\mathcal B}} \partial_n v_{0} \dd \ell  \,,
\end{equation}
where $\dd \ell$ represents arclength along $\partial{\mathcal B}$.
The far-field behavior \eqref{sec2:inn_match} will provide the
matching condition for the outer solution constructed in \S
\ref{sec2:outer}.

To interpret the role of $A(s)$, we estimate the diffusive flux $J$ into
the filament through its boundary. In terms of the tubular coordinates
\eqref{intro:tube}, and where $\ell_\eps$ denotes arclength along
$\partial{\mathcal B}_\eps$, we have
\begin{equation}\label{sec2:flux_1}
  J  = \int_{\partial \Gamma_{\eps}} (-D\partial_n u) \dd S
  = - D \int_{\Gam} \left( \int_{\partial {\mathcal B}_{\eps}}
  \left(1-\kappa_1(s)\eta_1 - \kappa_2(s) \eta_2\right)\partial_{\eta}
  u  \dd \ell_{\eps}
  \right) \dd s\,,
\end{equation}
where the curvature terms arise from the Jacobian between tubular and
Cartesian coordinates. In terms of the inner variables
$y_1={\eta_1/\eps}$, $y_2={\eta_2/\eps}$, $d \ell_{\eps}=\eps d\ell$,
$v\sim v_0$,  $\partial_\eta=\eps^{-1}\partial_n$, and using
\eqref{sec2:inn_A} in \eqref{sec2:flux_1}, we obtain to
leading-order, after neglecting the ${\mathcal O}(\eps)$ curvature
terms in \eqref{sec2:flux_1}, that
\begin{equation}\label{sec2:flux}
  J \sim - D \int_{\Gam} \left( \int_{\partial {\mathcal B}} \partial_n
    v_0 \dd \ell \right) \dd s = 2\pi D \int_{\Gam} A(s) \, \dd s \,.
\end{equation}
This relates the total flux across the boundary of the filament to a line
integral of the unknown $A(s)$ along the filament centerline.

Finally, we remark that if the filament has a variable cross-section
and non-uniform reactivity along the filament centerline, then we need
only replace $b=b(s)$ and $\partial {\mathcal B}=\partial {\mathcal
  B}(s)$ in \eqref{sec2:in_prob} to obtain a parameterized 2-D inner
problem, for which $d_c=d_c(s)$ can be readily computed numerically at
each fixed $s$ using the Steklov methodology in Appendix
\ref{app:steklov}. We will not pursue this simple generalization of
our theory in the analysis below.

\subsection{Outer solution}\label{sec2:outer}

In the limit $\eps\to 0$, the filament shrinks to its centerline:
$\Gamma_\eps\to \Gamma$. As a result, the outer solution exhibits a
line singularity of strength $A(s)$ to be determined
self-consistently.  In other words, when $\x$ approaches any
point of $\Gamma$, the solution becomes singular.  A common way to
incorporate such a line singularity on the ``degenerate boundary''
$\Gamma$ consists in introducing Dirac $\delta$ sources along
$\Gamma$:
\bsub \label{sec2:out_1}
\begin{equation}
    \Delta_{\x} u = -\frac{1}{D} + 2\pi \int_{\Gam} A(s^{\prime})
    \delta(\x-\X(s^{\prime})) \, \dd s^{\prime} \,, \quad \x\in \Omega\setminus
    \Gam \,; \qquad \partial_n u=0 \,, \quad \x\in
    \partial\Omega \,, \label{sec2:out_1a} 
\end{equation}
where we refer to $A(s)$ as the {\em line density}.  In this way, upon
matching the far-field behavior \eqref{sec2:inn_match} of the inner
solution near the filament to the outer solution, defined at
${\mathcal O}(1)$ distances from the filament, we obtain 
\begin{equation}
    u - A(s) \log|\x-\X(s)| \to \frac{A(s)}{\nu} \quad \mbox{as}
    \quad \x\to \X(s) \,, \quad s\in [0,L] \,. \label{sec2:out_1b}
\end{equation}
\esub 

The specification of the regular part of the singularity behavior in
\eqref{sec2:out_1b} is a constraint that will yield an integral
equation $A(s)$ that is accurate to all orders in the logarithmic
gauge $\nu={-1/\log\left[\eps d_c\right]}$.

Upon applying the divergence theorem to \eqref{sec2:out_1a} we derive
that
\begin{equation*}
  -\frac{|\Om|}{D} + 2\pi \int_{\Gam} A(s^{\prime}) \left(\int_{\Om}
  \delta\big(\x-\X(s^{\prime})\big) \dd \x\right) \dd s^{\prime} =0 \,,
\end{equation*}
which provides the constraint
\begin{equation}\label{sec2:cons}
  \int_{\Gam} A(s) \dd s = \frac{|\Om|}{2\pi D} \,.
\end{equation}

Assuming that \eqref{sec2:cons} holds, we now represent the solution
to \eqref{sec2:out_1} in terms of the Neumann Green's function $\GN$
of \eqref{intro:Gdef}. By using Green's second identity we derive that
\begin{align*}
  0=\int_{\Om} \left( \GN \Delta_{\x^{\prime}}u - u \Delta_{\x^{\prime}} \GN
  \right) \, \dd \x^{\prime}  = &- \frac{1}{D} \int_{\Om} \GN(\x^{\prime};\x)
  \dd \x^{\prime} + 2\pi \int_{\Gam} A(s^{\prime}) \left(
  \int_{\Om} \GN(\x^{\prime};\x) \delta(\x^{\prime}-\X(s^{\prime})) \dd
  \x^{\prime} \right) \, \dd s^{\prime} \\
  &  -\frac{1}{|\Om|} \int_{\Om} u(\x^{\prime}) \dd \x^{\prime}
  + u(\x) \,.
\end{align*}
Since $\GN(\x^{\prime};\x)=\GN(\x;\x^{\prime})$ and
$\int_{\Om} \GN(\x;\x^{\prime})\, \dd \x=0$, we obtain in terms of an unknown
constant $\chi$ that 
\begin{equation}\label{sec2:urep}
  u(\x) = \chi - 2\pi \int_{\Gam} A(s^{\prime}) \GN(\x;\X(s^{\prime}))\dd
  s^{\prime} \,, \quad \mbox{for} \quad \x \notin \Gam \,.
\end{equation}
Since the volume of the filament $\Gam_\eps$ is ${\mathcal O}(\eps^2)$, and
$\int_{\Om} \GN(\x;\x^{\prime})\, \dd \x=0$, we conclude from 
\eqref{intro:mfrt_ave} and \eqref{sec2:urep} that the {\em global MFRT} is
\begin{equation}\label{sec2:chi_ave}
   \bar{u} = \chi + o(1). \,
\end{equation}
The order of the error estimate in \eqref{sec2:chi_ave} arises from
neglecting the effect of the ${\mathcal O}(\eps)$ curvature correction
in the inner region.

Next, we will derive an integral equation formulation for $A(s)$ and
the constant $\chi$ by ensuring that the regular part of the singularity
structure in \eqref{sec2:out_1b} is satisfied.

To do so, we use \eqref{intro:Gdef2} to isolate the singular part of
$\GN$. Upon substituting \eqref{intro:Gdef2} into \eqref{sec2:urep}
we obtain that
\begin{equation}\label{sec2:uint_1}
  u(\x) = \chi - \frac{1}{2}\int_{\Gam}  \frac{A(s^{\prime})}{|\x-\X(s^{\prime})|}
  \dd s^{\prime} -2\pi \int_{\Gam} A(s^{\prime}) R(\x; \X(s^{\prime})) \dd
  s^{\prime} \,, \quad \mbox{for} \quad \x \notin \Gam \,.
\end{equation}
We now let $\X(s)$ be any target point on $\Gam$ and we let
$\x=\X_{\eta}(s)$  be any point at a distance $\eta$ from $\X(s)$ in the normal plane to $\Gamma$ at $\X(s)$.
In the limit $\eta\to 0$, we observe that the second integral in
\eqref{sec2:uint_1} has a well-defined limit, while the first integral
is divergent.
As such, we write $A(s^{\prime}) = (A(s^{\prime})-A(s)) + A(s)$ in
\eqref{sec2:uint_1} in order to obtain an integrand with a removable
singularity. Therefore, as $\x=\X_{\eta}(s) \to \X(s)$ we have
\begin{equation}\label{sec2:unit_2}
  u(\x) \sim \chi - \frac{A(s)}{2} \left( \int_{\Gam} \frac{1}{
    | \x - \X(s^{\prime})|} \dd s^{\prime} \right) + \frac{1}{2}
  \int_{\Gam} \frac{ A(s)-A(s^{\prime})}{|\X(s)-\X(s^{\prime})|} \dd s^{\prime}
  -2\pi \int_{\Gam} A(s^{\prime}) R(\X(s); \X(s^{\prime})) \dd
  s^{\prime} \,.
\end{equation}
In Appendix \ref{app:curve} we establish the leading-order singular behavior
\begin{equation}\label{sec2:csing}
  \int_{\Gam} \frac{\dd s^{\prime} }{|\X_{\eta}(s)-\X(s^{\prime})|} \sim
  -2\log\eta + {\mathcal O}(1) \,, \quad \mbox{as} \quad \eta\to 0\,.
\end{equation}
As a result, we conclude that \eqref{sec2:unit_2} has the following
asymptotic behavior as $\X_{\eta}(s)\to \X(s)$:
\begin{equation}\label{sec2:unit_3}
  u \sim  A(s) \log\eta + \chi - \frac{1}{2} A(s) {\mathcal E}(s) +
  \frac{1}{2}
  \int_{\Gam} \frac{ A(s)-A(s^{\prime})}{|\X(s)-\X(s^{\prime})|} \dd s^{\prime}
  -2\pi \int_{\Gam} A(s^{\prime}) R(\X(s); \X(s^{\prime})) \dd
  s^{\prime} \,.
\end{equation}  
Here ${\mathcal E}(s)$, referred to as the {\em intrinsic self-energy
  of the curve $\Gam$}, is defined by
\begin{equation}\label{sec2:intrinsic}
  {\mathcal E}(s) \equiv \lim_{\eta\to 0} \left( \int_{\Gam}
  \frac{\dd s^{\prime}}{
    |\X_{\eta}(s)-\X(s^{\prime})|}  + 2\log\eta \right) \,.
\end{equation}
In Appendix \ref{app:curve} we calculate ${\mathcal E}(s)$ when $\Gam$
is a planar loop and we provide a numerical algorithm to compute it
for an arbitrary simple closed curve. Finally, to obtain our integral
equation for $A(s)$ we enforce from \eqref{sec2:out_1b} that the
limiting behavior in \eqref{sec2:unit_3} must satisfy $u\sim
A(s)\log\eta + {A(s)/\nu}$ as $|\X_{\eta}(s)-\X(s)|=\eta\to 0$.
We summarize our result in the following proposition:

\begin{proposition}\label{prop:int_eq} 
Let $\Gam$ be any smooth simple closed curve in $\Omega$ parameterized
by arclength as $\X(s)$, which has an ${\mathcal O}(\eps)$
cross-sectional diameter. In the limit $\eps\to 0$, the asymptotic
solution to \eqref{intro:mfrt} in the outer region at ${\mathcal
O}(1)$ distances from $\Gam$ is
\begin{equation}\label{prop:uout}
  u(\x) \sim\chi - 2\pi \int_{\Gam} A(s) \GN(\x;\X(s))\dd
  s \,, \quad \mbox{for} \quad \x \notin \Gam \,,
\end{equation}
where the  line density $A(s)$ and the asymptotic approximation
$\chi$ to the volume-averaged GMFRT $\bar{u}$ are solutions to the
integral equation, defined for all $s\in [0,L]$ with $L=|\Gam|$, by
\begin{equation}\label{prop:inteq}
    (\mathcal{K}A)(s) \equiv \frac{A(s)}{\nu} + \frac{1}{2} A(s) {\mathcal E}(s) -
      \frac{1}{2}
  \int_{\Gam} \frac{ A(s)-A(s^{\prime})}{|\X(s)-\X(s^{\prime})|} \dd s^{\prime}
  +2\pi \int_{\Gam} A(s^{\prime}) R(\X(s); \X(s^{\prime})) \dd
  s^{\prime}=\chi \,, 
\end{equation}
subject to the constraint $\int_{\Gam} A(s) \dd s = {|\Om|/(2\pi D)}$.
In \eqref{prop:uout} and \eqref{prop:inteq}, $\GN$ is the Neumann
Green's function with regular part $R$ satisfying \eqref{intro:Gdef},
while ${\mathcal E}(s)$ is the intrinsic self-energy of $\Gam$ defined
by \eqref{sec2:intrinsic}. In addition, $\nu=-{1/\log\left[\eps
d_c\right]}$, where $d_c=d_c(b)$ is the logarithmic reactive
capacitance of the rescaled cross-sectional shape ${\mathcal
B}$ of the filament $\Gam_{\eps}$, as defined by the exterior 2-D
problem \eqref{sec2:in_prob}.
\end{proposition}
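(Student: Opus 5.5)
The plan is to assemble the matched-asymptotic argument laid out above into a single chain. The order is inner problem, outer representation, local expansion of the outer representation near $\Gam$, and then matching.

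\textbf{Step 1: inner solution.} Using the rescaled Laplacian \eqref{sec1:lapl2}, the leading inner problem at each fixed $s$ is the 2-D exterior problem \eqref{sec2:in_prob}. Hence $v_0=A(s)v_c$. Its far field, written in outer variables, is \eqref{sec2:inn_match}:
\[
v\sim A(s)\log|\boldsymbol{\eta}|+A(s)/\nu .
\]
This is the matching condition \eqref{sec2:out_1b}. The curvature terms enter only at ${\mathcal O}(\eps)$, so they do not affect the ${\mathcal O}(1)$ constant in the far field.

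\textbf{Step 2: outer problem and representation.} The outer problem is \eqref{sec2:out_1a}, with a distributional line source of density $2\pi A$. Integrating it over $\Om$ gives the solvability condition \eqref{sec2:cons}, which is the stated constraint $\int_\Gam A\,\dd s=|\Om|/(2\pi D)$. Green's second identity with the Neumann function \eqref{intro:Gdef}, together with the symmetry and zero-mean normalization of $\GN$, yields \eqref{sec2:urep}, i.e.\ \eqref{prop:uout}. The constant $\chi$ equals the spatial average of $u$ up to $o(1)$, since the tube volume is ${\mathcal O}(\eps^2)$.

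\textbf{Step 3: local behavior near $\Gam$.} Split $\GN$ by \eqref{intro:Gdef2}. The $R$-term is continuous up to $\Gam$, because $\dist(\Gam,\partial\Om)={\mathcal O}(1)$. For the Coulomb term, subtract $A(s)$ from $A(s')$. The difference kernel
\[
\frac{A(s)-A(s')}{|\X(s)-\X(s')|}
\]
is bounded as $s'\to s$, because $|\X(s)-\X(s')|\sim|s-s'|$ and $A$ is assumed smooth. The limit $\eta\to0$ therefore passes inside this integral by dominated convergence. What remains is $A(s)$ times $\int_\Gam|\X_\eta(s)-\X(s')|^{-1}\dd s'$, whose expansion is $-2\log\eta+{\mathcal E}(s)+o(1)$ by \eqref{sec2:csing} and the definition \eqref{sec2:intrinsic}.

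\textbf{Main obstacle: existence of ${\mathcal E}(s)$.} The hardest point is showing that the limit defining ${\mathcal E}(s)$ exists, with the coefficient of $\log\eta$ being exactly $-2$. I would split the integral into a local window $|s'-s|<\delta$ and its complement. On the complement the integrand converges uniformly, by the well-separatedness assumption on $\Gam$. On the window, expand $\X(s')-\X_\eta(s)$ in a Taylor series. Because the offset $\eta$ lies in the normal plane, this gives
\[
|\X_\eta(s)-\X(s')|^2=\eta^2+(s'-s)^2+{\mathcal O}\bigl(\eta(s'-s)^2+|s'-s|^3\bigr).
\]
Comparing with $\int_{-\delta}^{\delta}(\eta^2+\sigma^2)^{-1/2}\dd\sigma=2\,\mathrm{arcsinh}(\delta/\eta)=-2\log\eta+2\log(2\delta)+o(1)$ isolates the logarithm. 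The remainder is bounded by an integrable kernel, so its contribution converges.

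One further point needs checking: the result must not depend on the direction of $\eta$ in the normal plane. The curvature correction $\eta\,\kappa\cdot(s'-s)^2$ contributes only $o(1)$ after integration, so ${\mathcal E}(s)$ is indeed independent of that direction.

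\textbf{Step 4: matching.} The expansion of Step 3 gives \eqref{sec2:unit_3}. Its coefficient of $\log\eta$ is $A(s)$, which automatically agrees with the inner far field \eqref{sec2:inn_match}. Equating the ${\mathcal O}(1)$ terms to $A(s)/\nu$ and rearranging gives \eqref{prop:inteq}, for every $s\in[0,L]$.

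Together with the constraint from Step 2, this closes the system for $(A,\chi)$. I would also remark that because $1/\nu$ multiplies $A$, the operator $\mathcal K$ is a second-kind Fredholm operator for small $\nu$, so the system is well posed in the asymptotic regime.
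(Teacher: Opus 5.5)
Your proposal is correct and follows the paper's derivation step for step: the inner problem defining $d_c$, the Green's-function representation with the solvability constraint, subtraction of $A(s)$ to isolate the universal $-2\log\eta$ singularity and $\mathcal{E}(s)$, and then matching of the $\mathcal{O}(1)$ terms; your window-plus-Taylor argument for the existence and direction-independence of $\mathcal{E}(s)$ is simply a more careful version of the local/global split in Appendix~\ref{app:curve}. One caveat concerns your closing remark, which goes beyond the proposition and which the paper also states loosely: the difference-kernel term acts like a $-\log|k|$ Fourier multiplier (on a ring, mode $k$ picks up $-2\sum_{j=1}^{k}(2j-1)^{-1}$), so $\mathcal{K}$ is not a compact perturbation of $\nu^{-1}I$, its symbol vanishes at arclength wavenumbers of order $1/(\eps d_c)$, and well-posedness holds only for densities that vary slowly on the $\eps$ scale.
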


We remark that \eqref{prop:inteq} is a Fredholm integral equation of
the second kind on $\Gam$. By solving \eqref{prop:inteq} numerically
as $A=A(s;\nu)$ and $\chi=\chi(\nu)$ it effectively sums all powers of
the logarithmic gauge function $\nu$. A related approach to sum
logarithmic expansions in 2-D domains with spatially localized defects
was initially undertaken in \cite{WHK}.  Through the logarithmic
reactive capacitance $d_c(b)$, this gauge function encodes the
reactivity $b$ of the boundary of the filament and its cross-sectional
shape ${\mathcal B}$. Recall that in Appendix \ref{app:steklov} it is
shown how to numerically compute $d_c(b)$ by using a Steklov
representation. Moreover, in \eqref{prop:inteq} we emphasize that
${\mathcal E}(s)$ is independent of the shape of the confining domain
$\Omega$ and describes only the interaction of the curve $\Gamma$ with
itself. The effect of $\Omega$, and the relative position of $\Gam$
within $\Omega$, is encoded in \eqref{prop:inteq} by the regular part
$R$ of the Neumann Green's function.

\subsection{A two-term approximation}\label{sec:twoterm}

We now determine a more explicit two-term asymptotic approximation for
$\chi$ in powers of $\nu$. To do so, in \eqref{prop:inteq} and
\eqref{sec2:cons} we expand $A(s;\nu)$ and $\chi(\nu)$ as
\begin{equation}\label{sec2a:asy}
  A(s;\nu) = A_{0}(s) + \nu A_{1}(s) + \ldots \,, \qquad
  \chi(\nu)= \frac{\chi_0}{\nu} + \chi_{1} + \ldots \,.
\end{equation}
From the leading terms in \eqref{prop:inteq}, we obtain that
$A_0=\chi_0$ for all $s$. From the constraint \eqref{sec2:cons}, we get
\begin{equation}\label{sec2a:asy_0}
  A_0 = \chi_0 = \frac{|\Omega|}{2\pi D |\Gam|} \,.
\end{equation}
At next order, we obtain from \eqref{prop:inteq} and \eqref{sec2:cons}
that
\begin{equation}\label{sec2a:asy_1}
  \chi_1 = 2\pi A_0 \int_{\Gam} R\left(\X(s); \X(s^{\prime}) \right) 
  \dd s^{\prime} + A_1(s) + \frac{1}{2} A_0 {\mathcal E}(s) \,, \qquad
  \int_{\Gam} A_1  \dd s = 0 \,.
\end{equation}
To isolate the constant $\chi_1$, we integrate the first equation in
\eqref{sec2a:asy_1} over $\Gam$ to conclude that
\begin{equation}\label{sec2a:chi_1}
  \chi_{1} = \frac{A_0}{|\Gam|} \left( 2\pi \int_{\Gam} \int_{\Gam}
    R\left(\X(s); \X(s^{\prime}) \right) \dd s \dd s^{\prime} +
    \frac{1}{2} \int_{\Gam} {\mathcal E}(s) \, \dd s\right) \,.
\end{equation}
Upon substituting $\chi_1$ back into \eqref{sec2a:asy_1}, we determine
$A_1=A_{1}(s)$ as
\begin{equation}\label{sec2a:a1}
  A_1 = -\frac{A_0}{2} \left( {\mathcal E}(s) - \frac{1}{|\Gam|}
  \int_{\Gam} {\mathcal E}(s) \dd s \right) - 2\pi A_0 \left(
  \int_{\Gam} R\left(\X(s);\X(s^{\prime})\right)\dd s^{\prime} -
  \frac{1}{|\Gam|} \int_{\Gam} \int_{\Gam}
  R\left(\X(s); \X(s^{\prime}) \right) \dd s \dd s^{\prime} \right) \,,
\end{equation}
where $A_0$ is given in \eqref{sec2a:asy_0}. We summarize the results
of our two-term expansion for the volume-averaged MFRT $\chi$ and the
outer solution in \eqref{prop:uout} as follows:

\begin{proposition}\label{prop:2term} 
In the limit $\nu\to 0$,
we have the explicit two-term approximation:
\bsub\label{prop:2term_full}
\begin{equation}\label{prop:2term_chi}
    \chi \sim \frac{|\Om|}{2\pi D |\Gam|\nu} \left( 1 +
    \frac{2\pi \nu}{|\Gam|}  \int_{\Gam} \int_{\Gam}
    R\left(\X(s); \X(s^{\prime}) \right) \dd s \dd s^{\prime} +
    \frac{\nu}{2|\Gam|} \int_{\Gam} {\mathcal E}(s) \dd s +
    {\mathcal O}(\nu^2)  \right) \,,
\end{equation}
where $|\Gam|=L$. The corresponding outer solution for $\x\notin \Gam$
is
\begin{equation}\label{prop:2term_uout}
    u(\x) \sim \frac{|\Om|}{2\pi D |\Gam|\nu} \left( 1 +
     \frac{2\pi\nu}{|\Gam|}  \int_{\Gam} \int_{\Gam}
    R\left(\X(s); \X(s^{\prime}) \right) \dd s \dd s^{\prime} +
    \frac{\nu}{2|\Gam|} \int_{\Gam} {\mathcal E}(s) \dd s - 2\pi \nu
    \int_{\Gamma} G\left(\x;\X(s)\right) \dd s + {\mathcal O}(\nu^2) \right) \,.
\end{equation}
\esub
\end{proposition}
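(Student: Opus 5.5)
The plan is to treat Proposition~\ref{prop:2term} as a regular perturbation of the Fredholm system of Proposition~\ref{prop:int_eq}. We substitute the ansatz \eqref{sec2a:asy} into \eqref{prop:inteq} and into the constraint $\int_{\Gam}A\,\dd s=|\Om|/(2\pi D)$, and collect powers of $\nu$.

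At order $\nu^{-1}$ only the local term $A/\nu$ survives on the left and $\chi_0/\nu$ on the right. This forces $A_0(s)\equiv\chi_0$. Substituting into the constraint at $\mathcal{O}(1)$ gives \eqref{sec2a:asy_0}.

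At order $\nu^0$ we first note a key simplification. Since $A_0$ is constant, the principal-value integral $\int_{\Gam}(A_0(s)-A_0(s'))/|\X(s)-\X(s')|\,\dd s'$ vanishes identically. What remains is \eqref{sec2a:asy_1}, with $\int_{\Gam}A_1\,\dd s=0$ coming from the constraint at order $\nu$. Integrating \eqref{sec2a:asy_1} over $s\in[0,L]$ annihilates $A_1$ and yields $\chi_1$ in the form \eqref{sec2a:chi_1}. Factoring $\chi_0/\nu$ out of $\chi\sim\chi_0/\nu+\chi_1$ then gives exactly \eqref{prop:2term_chi}. As a consistency check, $A_1$ is recovered as \eqref{sec2a:a1}, and it does have zero mean.

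For the outer solution \eqref{prop:2term_uout} we insert $A\sim A_0+\nu A_1$ and $\chi\sim\chi_0/\nu+\chi_1$ into \eqref{prop:uout}. The integral term $-2\pi\int_{\Gam}A\,\GN\,\dd s$ equals $-2\pi A_0\int_{\Gam}\GN(\x;\X(s))\,\dd s+\mathcal{O}(\nu)$. After factoring out $A_0/\nu$ this term appears at relative order $\nu$, giving the stated bracket. The $A_1$ contribution is $\mathcal{O}(\nu)$ absolutely, hence $\mathcal{O}(\nu^2)$ relatively, and is absorbed into the error.

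The main point requiring care is justifying the truncation, i.e.\ that the remainder is genuinely $\mathcal{O}(\nu^2)$ in relative terms. For this I would argue that the operator $\nu\mathcal{K}=I+\nu\mathcal{L}$ is a small perturbation of the identity. Here $\mathcal{L}$ collects the $\mathcal{E}$, difference-quotient and $R$ terms. On a smooth, well-separated closed curve, $\mathcal{E}$ and $R$ are smooth, and the difference-quotient operator maps smooth functions to smooth functions. Hence $\mathcal{L}$ is a bounded operator on $C^1(\Gam)$.

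A Neumann series then shows that the expansion of $A$ and $\chi$ is valid to all orders. The next corrections $\nu^2A_2$ and $\nu\chi_2$ are uniformly bounded, which gives the stated error terms.
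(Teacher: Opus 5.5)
Your formal calculation is correct and is the paper's derivation: the $\mathcal{O}(\nu^{-1})$ and $\mathcal{O}(1)$ balances, the vanishing of the difference-quotient term on the constant $A_0$, integration over $\Gam$ to eliminate $A_1$, and substitution into \eqref{prop:uout}. The paper presents the result as a formal expansion and does not justify the truncation.

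The gap is in your last two paragraphs, which try to supply that justification. The claim that $\mathcal{L}$ is bounded on $C^1(\Gam)$ is false. The difference-quotient operator $A\mapsto\int_{\Gam}(A(s)-A(s'))/|\X(s)-\X(s')|\,\dd s'$ acts like $\log|\partial_s|$, so it maps $C^1$ boundedly only into $C^0$. On the equatorial ring of radius $r$ this is explicit. On the mode $e^{iks/r}$ the operator has eigenvalue $4\sum_{j=1}^{|k|}(2j-1)^{-1}\sim 2\log|k|+4\log 2+2\gamma_e$. With $\mathcal{E}=2\log(8r)$, the symbol of $\K$ for large $|k|$ is therefore $\log\bigl(2r/(|k|\,\eps d_c\,e^{\gamma_e})\bigr)$, plus rapidly decaying Fourier coefficients of the $R$-term. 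So $\nu\K=I+\nu\mathcal{L}$ is not a small perturbation of the identity. Its symbol vanishes, and then becomes negative, at $|k|\approx 2r e^{1/\nu-\gamma_e}$, i.e.\ at wavelengths of order $\eps d_c$; this is the familiar high-frequency breakdown of slender-body integral equations. Consequently no Neumann series converges in operator norm on any fixed space. Moreover, along a sequence $\nu\to 0$ the system is not even uniquely solvable, since a mode with $k\neq0$ lies in the kernel of $\K$ and has zero mean. ``Valid to all orders'' therefore does not follow from your argument.

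Because the proposition is a formal asymptotic statement, the simplest fix is to drop that paragraph. If you want a genuine error bound, it must use the smoothness of the data, not a global bound on $\mathcal{L}$. Each $A_n$ is $C^\infty$, and the residual of the truncated ansatz $A_0+\nu A_1$, $\chi_0/\nu+\chi_1$ in the $\K$-equation is exactly $\nu\mathcal{L}A_1$, which is smooth. You would then need a stability estimate for $\nu\K$ restricted to low frequencies; on the ring, for $|k|\le e^{1/(2\nu)}$, the symbol is at least $1/2-\mathcal{O}(\nu)$. You would also need to show that the solution carries negligible high-frequency content. For a general curve, where $\mathcal{L}$ is not a Fourier multiplier, that last step is a nontrivial additional argument.
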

From \eqref{prop:2term_chi} we observe that this two-term expansion
depends on the average of both the intrinsic self-energy of $\Gam$ and
of the regular part of the Neumann Green's function.

\subsection{Multiple non-intersecting thin filaments}\label{sec5:multiple}

We now extend our asymptotic theory 
to derive a coupled set of integral equations for the case of multiple
non-intersecting simple closed curves.

We let $\Gam_{i}$ for $i\in \lbrace{1,\ldots,N\rbrace}$ be $N$ smooth,
simple, and closed non-intersecting curves with $\Gam_{i}\subset
\Omega$ and $\mbox{dist}(\Gam_i,\Gam_j)>0$ for $i\neq j$. Each curve
$\Gam_i$ is parameterized by $\X_{i}(s_i)$ where $s_i$ with $0\leq
s_i\leq L_i$, is arclength along $\Gam_{i}$, and $L_i$ is the length
of $\Gam_i$.  Each thin filament $\Gam_{\eps,i}$ around the centerline
$\Gam_{i}$ is assumed to have a uniform cross-section $\eps
{\mathcal B}_i$ at each $s_i\in [0,L_i]$, but its shape ${\mathcal
B}_i$ may be different for each filament. We assume that the boundary
of each filament has a uniform reactivity $b_i$. With the local
geometry defined as similar to \eqref{intro:tube}, we obtain $N$
canonical inner problems for each filament, with each given by the
solution to (see \eqref{sec2:in_prob})
\begin{equation}\label{sec5:in_prob}
  \begin{split}
    \Delta_\y v_{ci} &=0\,, \quad \y \in {\mathbb R}^{2} \setminus
          {\mathcal B}_{i} \,,\\
          \partial_{n} v_{ci} + b_i v_{ci} =0
          \,, \quad \y\in \partial {\mathcal B}_{i}\,; & \qquad v_{ci}
          \sim \log|\y| - \log\left[ d_{ci}(b_i)\right] + o(1) \,,
          \quad \mbox{as} \quad |\y|\to \infty \,.
\end{split}
\end{equation}
For $\Gam_i$, the far-field of $v_{ci}$ provides a matching condition
written in terms of outer variables and a line density $A_{i}(s_i)$ as
(see \eqref{sec2:inn_match})
\begin{equation}\label{sec5:inn_match}
  v_i \sim A_i(s_i) \left(
  \log\left({\eta/\eps}\right)-\log\left[d_{ci}(b_i)\right] \right) \sim
  A_{i}(s_i) \log\eta + {A_{i}(s_i)/\nu_i} \,, \qquad \forall s_i\in [0,L_i] \,,
\end{equation}
where $\nu_i\equiv {-1/\log[\eps d_{ci}(b_i)]}$ and $\eta=|\x-\X_i(s_i)|$ is the perpendicular distance from any given target point $\X_i(s_i)$ on the $i$-th filament.  Upon matching the inner
solution near each filament to the outer solution, we obtain in place
of \eqref{sec2:out_1} that the outer solution satisfies
\bsub \label{sec5:out_1}
\begin{align}
  \Delta_{\x} u &= -\frac{1}{D} + 2\pi \sum_{i=1}^{N} \int_{\Gam_i}
  A_{i}(s_{i}^{\prime})
\delta(\x-\X_{i}(s_{i}^{\prime})) \, \dd s_{i}^{\prime} \,,
\quad \x\in \Omega\setminus\cup_{i=1}^{N}\Gam_i \,;
\quad \partial_n u=0 \,, \quad \x\in
    \partial\Omega \,, \label{sec5:out_1a} \\
    u &- A_{i}(s_i) \log|\x-\X_{i}(s_i)| \to
    \frac{A_{i}(s_i)}{\nu_i} \quad \mbox{as}
    \quad \x\to \X_{i}(s_i) \,, \quad s_i\in [0,L_i] \,,
    \quad i\in \lbrace{1,\ldots,N\rbrace} \,. \label{sec5:out_1b}
\end{align}
\esub
Our goal is to derive a coupled set of integral equations for the line
densities $A_i(s_i)$ from enforcing the $N$ constraints obtained from
the regular parts of the singularity structures in
\eqref{sec5:out_1b}. Upon applying the divergence theorem to
\eqref{sec5:out_1a} we readily derive that
\begin{equation}\label{sec5:cons}
 \sum_{i=1}^{N} \int_{\Gam_i} A_{i}(s_i) \dd s_i = \frac{|\Om|}{2\pi D} \,.
\end{equation}
In place of \eqref{sec2:urep} the outer solution is now represented in
terms of the Neumann Green's function $\GN$ and an unknown constant
$\chi$ by
\begin{equation}\label{sec5:urep}
  u(\x) = \chi - 2\pi \sum_{i=1}^{N} \int_{\Gam_i} A_{i}(s_{i}^{\prime})
  \GN(\x;\X_{i}(s_{i}^{\prime}))\dd s_{i}^{\prime} \,,
  \quad \mbox{for} \quad \x \notin \cup_{i=1}^{N} \Gam_i \,,
\end{equation}
where the global MFRT is related to $\chi$ by
\begin{equation}\label{sec:chi_ave}
   \bar{u} = \chi + o(1) \,.
\end{equation}

For each fixed $j\in \lbrace{1,\ldots,N\rbrace}$, we decompose $\GN$ into
its singular part and regular part as
\begin{equation}\label{sec5:decomp}
  G\big(\x;\X_{j}(s_{j}^{\prime})\big) =
  \frac{1}{4\pi|\x-\X_{j}(s_j^{\prime})|} + R\big(\x;\X_{j}(s_j^{\prime})\big)
  \,,
\end{equation} 
in order to reduce \eqref{sec5:urep} to
\begin{equation}\label{sec5:uint_1}
  \begin{split}
  u(\x) &= \chi - \frac{1}{2}\int_{\Gam_j}  \frac{A_j(s_{j}^{\prime})}
  {|\x-\X_j(s_{j}^{\prime})|} \dd s_{j}^{\prime} -
  2\pi \int_{\Gam_j} A_j(s_{j}^{\prime}) R(\x; \X_j(s_{j}^{\prime})) \dd
  s_{j}^{\prime} \\
  & \qquad -2 \pi \stacksum \int_{\Gam_i} A_{i}(s_{i}^{\prime}) 
  G\big(\x;\X_{i}(s_{i}^{\prime})\big) \dd s_{i}^{\prime}  
  \,, \quad \mbox{for} \quad \x \notin \cup_{i=1}^{N} \Gam_i \,.
  \end{split}
\end{equation}

We now fix a target point $\X_{j}(s_j)$ on $\Gam_j$ and we let
$\x=\X_{\eta,j}(s_j)$ denote any point at a distance $\eta$ from
$\X_j(s_j)$ in the normal plane to $\Gamma_j$ at $\X_j(s_j)$.
As similar to the analysis in
\eqref{sec2:unit_2}-\eqref{sec2:unit_3} for a single filament, we
obtain in the limit $\eta\to 0$ that in place of \eqref{sec2:unit_3}
we now have
\begin{equation}\label{sec5:unit_3}
  \begin{split}
  u &\sim A_j(s_j) \log\eta + \chi - \frac{1}{2} A_{j}(s_j) {\mathcal
    E}_{j}(s_j) + \frac{1}{2} \int_{\Gam_j} \frac{
    A_{j}(s_j)-A_{j}(s_{j}^{\prime})}
  {|\X_{j}(s_j)-\X_{j}(s_{j}^{\prime})|} \dd s_{j}^{\prime} -2\pi
  \int_{\Gam_j} A_{j}(s_{j}^{\prime}) R(\X_{j}(s_j);
  \X_{j}(s_j^{\prime})) \dd s_{j}^{\prime} \\
   & \qquad  -2 \pi \stacksum \int_{\Gam_i} A_{i}(s_{i}^{\prime}) 
  G\big(\X_{j}(s_j);\X_{i}(s_{i}^{\prime})\big) \dd s_{i}^{\prime}  
  \,, \quad \mbox{as} \quad \X_{\eta,j}(s_j)\to \X_{j}(s_j)\,.
  \end{split}
\end{equation}  
Here ${\mathcal E}_{j}(s_j)$, is {\em intrinsic self-energy
  of $\Gam_j$} defined by
\begin{equation}\label{sec5:intrinsic}
  {\mathcal E}_{j}(s_j) \equiv \lim_{\eta\to 0^{+}} \left( \int_{\Gam_j}
  \frac{\dd s_{j}^{\prime}}{
    |\X_{\eta,j}(s_j)-\X_{j}(s_{j}^{\prime})|}  +
  2\log\eta \right) \,.
\end{equation}

Finally, to obtain our coupled set of integral equations for the line
densities $A_{j}(s_j)$ for $j\in \lbrace{1,\ldots,N\rbrace}$ and
$\chi$, we enforce from \eqref{sec5:out_1b} that the limiting behavior
in \eqref{sec5:unit_3} must satisfy $u\sim A_{j}(s_j)\log\eta +
{A(s_j)/\nu_j}$ as $|\X_{\eta,j}(s_j)-\X_{j}(s_j)|=\eta\to 0$.  We
summarize our result in the following proposition:

\begin{proposition}\label{mprop:int_eq} 
Let $\cup_{j=1}^{N}\Gam_j$ be a collection of smooth, simple,
non-intersecting closed curves in $\Omega$ parameterized by arclength
as $\X_{j}(s_j)$ for $j\in \lbrace{1,\ldots,N\rbrace}$ with
$\mbox{dist}(\Gam_i,\Gam_j)={\mathcal O}(1)$ for $i\neq j$. Assume
that each filament centered around $\Gam_j$ has an ${\mathcal
O}(\eps)$ cross-sectional diameter and a uniform shape $\eps
{\mathcal B}_j$.  In the limit $\eps\to 0$, the asymptotic solution
to \eqref{intro:mfrt} in the outer region at ${\mathcal O}(1)$
distances from $\cup_{j=1}^{N}
\Gam_j$ is
\begin{equation}\label{mprop:uout}
  u(\x) = \chi - 2\pi \sum_{j=1}^{N} \int_{\Gam_j}
  A_{j}(s_{j}) \GN(\x;\X_{j}(s_{j}))\dd
  s_{j} \,, \quad \mbox{for} \quad \x \notin \cup_{j=1}^{N}
  \Gam_j \,,
\end{equation}
where the line densities $A_{j}(s_j)$ for $j\in \lbrace{1,\ldots,N
\rbrace}$ and the asymptotic approximation $\chi$ to the
GMFRT $\bar{u}$ are solutions to the coupled 1-D Fredholm integral
equations of the second kind, defined for all $s_j\in [0,L_j]$ by
\begin{equation}\label{mprop:inteq}
\mathcal{K}_j A_j + \sum_{\substack{i=1\\i\neq j}}^N \mathcal{G}_{ji} A_i = \chi, \quad j = 1,\ldots,N; \qquad \sum_{j=1}^{N}\int_{\Gam_j} A_{j}(s_j) \dd
s_j = \frac{|\Om|}{2\pi D},
\end{equation}
where the integral operators $\mathcal{K}_j$ and $\mathcal{G}_{ji}$ are defined in \eqref{intro:ops}.  In \eqref{mprop:uout} $\GN$ is the Neumann Green's function with
regular part $R$ satisfying \eqref{intro:Gdef}.
\end{proposition}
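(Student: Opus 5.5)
The plan is to follow the single-filament derivation leading to Proposition~\ref{prop:int_eq} and run it separately near each centerline $\Gam_j$. The filaments are $\mathcal{O}(1)$ apart, so near any one of them the other filaments enter only through smooth, bounded contributions.

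The first step is the inner analysis. Near $\Gam_j$ we use the Bishop-frame coordinates \eqref{intro:tube} and the rescaling $\y=\boldsymbol{\eta}/\eps$. Exactly as in \eqref{sec1:lapl2}, the leading-order inner problem is the 2-D exterior problem \eqref{sec5:in_prob}, so $v_0=A_j(s_j)v_{cj}$. Its far field \eqref{sec5:inn_match} supplies the matching condition \eqref{sec5:out_1b}. The local flux identity \eqref{sec2:inn_A} identifies $A_j$ as the cross-sectional flux density.

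The second step is the outer problem. We replace each filament by the line source in \eqref{sec5:out_1a}. Applying the divergence theorem gives the solvability constraint \eqref{sec5:cons}, which is the second relation in \eqref{mprop:inteq}. Green's second identity with the Neumann Green's function \eqref{intro:Gdef} then gives the representation \eqref{sec5:urep}, which is \eqref{mprop:uout}. Because $\int_\Om G\,\dd\x=0$ and the filaments occupy volume $\mathcal{O}(\eps^2)$, we get $\bar u=\chi+o(1)$.

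The third step, and the main one, is enforcing the regular part of the singularity behavior. Fix $j$ and a target point $\X_j(s_j)$, and split \eqref{sec5:urep} as in \eqref{sec5:uint_1}. The terms with $i\neq j$ involve $G(\x;\X_i(s_i'))$ with $\dist(\x,\Gam_i)=\mathcal{O}(1)$, so they are smooth and converge to $\mathcal{G}_{ji}A_i$ as $\x\to\X_j(s_j)$. The regular-part term $R$ on $\Gam_j$ is also continuous.

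For the singular Coulomb term we write $A_j(s_j')=(A_j(s_j')-A_j(s_j))+A_j(s_j)$. The difference quotient $(A_j(s_j)-A_j(s_j'))/|\X_j(s_j)-\X_j(s_j')|$ is bounded for smooth $A_j$ near the diagonal, and it stays bounded away from the diagonal by the well-separatedness assumption. Its integral therefore passes to the limit $\eta\to0$ by dominated convergence. The remaining term $A_j(s_j)\int_{\Gam_j}|\X_{\eta,j}-\X_j(s_j')|^{-1}\dd s_j'$ is handled by \eqref{sec2:csing} and the definition \eqref{sec5:intrinsic}. It produces $-2A_j\log\eta+A_j\mathcal{E}_j(s_j)+o(1)$.

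Collecting these terms gives \eqref{sec5:unit_3}. The coefficient of $\log\eta$ is automatically $A_j(s_j)$, consistent with matching. Equating the constant term to $A_j(s_j)/\nu_j$ and rearranging yields $\mathcal{K}_jA_j+\sum_{i\neq j}\mathcal{G}_{ji}A_i=\chi$, with the operators given in \eqref{intro:ops}.

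The step I expect to be the main obstacle is justifying \eqref{sec2:csing} and the existence of the limit \eqref{sec5:intrinsic} uniformly in $s_j$. My approach would be to split $\Gam_j$ into a local arc $|s_j'-s_j|<\delta$ and its complement. On the local arc we expand $\X_j(s_j')=\X_j(s_j)+(s_j'-s_j)\T+\mathcal{O}((s_j'-s_j)^2)$. Because the offset $\eta$ lies in the normal plane, this gives $|\X_{\eta,j}-\X_j(s_j')|^2=\eta^2+(s_j'-s_j)^2+\mathcal{O}(\eta|s_j'-s_j|^2+|s_j'-s_j|^3)$. Integrating $(\eta^2+t^2)^{-1/2}$ over $|t|<\delta$ gives $2\log(2\delta/\eta)+o(1)$, and the correction terms are integrable uniformly in $\eta$. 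On the complement, the integrand converges by well-separatedness.
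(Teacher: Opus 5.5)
Your proposal is correct and follows the paper's derivation essentially step for step. The steps are: the 2-D inner problem and far-field matching for each filament, the divergence theorem for the constraint, and Green's identity for the representation; then splitting off the $i=j$ Coulomb term, subtracting $A_j(s_j)$ in its numerator, and using the local/global arc decomposition of Appendix~\ref{app:curve} to extract $-2\log\eta+\mathcal{E}_j(s_j)$ before matching the constant term to $A_j(s_j)/\nu_j$. Your expansion $|\X_{\eta,j}-\X_j(s_j')|^2=\eta^2+t^2+\mathcal{O}(\eta t^2+|t|^3)$ is a slightly more careful version of the paper's Pythagoras approximation. It also supplies the $\eta$-uniform bound $|\X_{\eta,j}-\X_j(s_j')|\gtrsim|s_j'-s_j|$, which your dominated-convergence step for the difference-quotient term tacitly needs, so state it there.
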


\subsection{Splitting probabilities}\label{sec5:split}

In this subsection we describe an extension of our analysis to
determine the splitting probabilities for the case of $N$ filaments
$\Gam_{\eps,j}$ for $j\in \lbrace{1,\ldots,N\rbrace}$. We again assume
that the centerlines $\Gamma_j$ for $j\in \lbrace{1,\ldots,N\rbrace}$
are a collection of non-intersecting, smooth and simple closed
curves. Let $\phi_k(\x)$ be the probability that a searcher starting
at $\x$ is absorbed by filament $k$ before any other. Then,
$\phi_k(\x)$ satisfies 
\bsub\label{eq:split-pde}
\begin{gather}\label{eq:split-pde_a}
  \Delta_{\x}\phi_k=0\,, \quad \x\in\Omega\setminus \cup_{j=1}^{N} \Gamma_{\eps,j}\,;
  \qquad \partial_n\phi_k=0\,, \quad \x\in\partial\Omega\,,\\[5pt]
\label{eq:split-pde_b} \eps \partial_n\phi_k+b_j\,(\phi_k-\delta_{jk})=0\,, \qquad
  \x\in\partial \Gamma_{\eps,j}\,, \quad j\in \lbrace{1,\dots,N\rbrace} \,,
\end{gather}
\esub 
with $\phi_k=\delta_{jk}$ on $\partial \Gamma_{\eps,j}$ when
$b_j=\infty$ and where $\delta_{jk}$ is the Kronecker symbol. The analysis outlined in the previous section can be applied in an
identical fashion to solve \eqref{eq:split-pde}. The solvability
condition is altered by the homogeneous nature of the right-hand side
of \eqref{eq:split-pde_a} and an extra constant term is introduced by
the boundary condition \eqref{eq:split-pde_b}. The result is
summarized as follows:
\begin{proposition}\label{prop:splitting}
Let $\cup_{j=1}^{N}\Gam_j$ be a collection of smooth, simple,
non-intersecting closed curves in $\Omega$ parameterized by arclength
as $\X_{j}(s_j)$ for $j\in \lbrace{1,\ldots,N\rbrace}$ with
$\mbox{dist}(\Gam_i,\Gam_j)={\mathcal O}(1)$ for $i\neq j$. Assume
that each filament centered around $\Gam_j$ has an ${\mathcal
O}(\eps)$ cross-sectional diameter and a uniform shape $\eps
{\mathcal B}_j$.  In the limit $\eps\to 0$, the asymptotic solution
to \eqref{eq:split-pde} in the outer region at ${\mathcal O}(1)$
distances from $\cup_{j=1}^{N}
\Gam_j$ takes the form
\begin{equation}\label{eq:split-outer}
  \phi_k(\x)\sim\bar{\phi}_k-2\pi\sum_{j=1}^{N}\int_{\Gam_j}
  B_{kj}(s_{j})\,
  G\big(\x;  \X_j(s_{j})\big)\dd s_{j} \,, \quad \x\in
  \Omega\setminus\cup_{j=1}^{N}\Gam_j \,,
\end{equation}
where $B_{kj}(s_j)$ for $j\in{1,\ldots,N}$ are the line densities of on
$\Gamma_j$ for the $k$-th splitting probability. The quantity
$\bar{\phi}_k$ is the leading-order volume average of $\phi_k$ over
all starting locations. The line densities satisfy a system of
coupled 1-D Fredholm integral equations with a constraint, defined for all $s_j\in
[0,L_j]$ by
\begin{equation}\label{eq:split-IE}
\mathcal{K}_j B_{kj} + \sum_{\substack{i=1\\i\neq j}}^N \mathcal{G}_{ji} B_{ki} = \bar{\phi}_k -\delta_{jk}, \quad j = 1,\ldots,N; \qquad \sum_{j=1}^{N}\int_{\Gam_j} B_{kj}(s_j) \dd
s_j = 0,
\end{equation}
where the integral operators $\mathcal{K}_j$ and $\mathcal{G}_{ji}$ are defined in \eqref{intro:ops}.
\end{proposition}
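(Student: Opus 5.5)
The plan is to repeat the matched-asymptotic derivation that gave Proposition~\ref{mprop:int_eq}, with the MFRT problem replaced by \eqref{eq:split-pde}. Only two ingredients change: the inner boundary condition and the solvability condition.

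\emph{Inner problem.} Near filament $j$, set $w=\phi_k-\delta_{jk}$. Then $w$ is harmonic in $\y$ to leading order, by \eqref{sec1:lapl2} with zero right-hand side, and satisfies $\partial_n w + b_j w=0$ on $\partial\mathcal{B}_j$. So $w\sim B_{kj}(s_j)\,v_{cj}(\y)$, where $v_{cj}$ is the canonical solution of \eqref{sec5:in_prob}. Its far field is
\[
\phi_k\sim \delta_{jk}+B_{kj}\log\eta+\frac{B_{kj}}{\nu_j}.
\]
This replaces \eqref{sec5:out_1b}, and the only difference is the additional constant $\delta_{jk}$. For $b_j=\infty$ the same form holds with the Dirichlet canonical solution.

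\emph{Outer problem.} The filaments shrink to line sources, so
\[
\Delta\phi_k=2\pi\sum_j\int_{\Gamma_j}B_{kj}\,\delta(\x-\X_j(s'))\,\dd s'
\]
in $\Omega$, with $\partial_n\phi_k=0$ on $\partial\Omega$. Integrating over $\Omega$ with the divergence theorem gives $\sum_j\int_{\Gamma_j}B_{kj}\,\dd s_j=0$, because there is no volume source; this replaces \eqref{sec5:cons}. Green's second identity with $G$, exactly as in the derivation of \eqref{sec2:urep}, then yields \eqref{eq:split-outer}. The constant $\bar\phi_k=|\Omega|^{-1}\int_\Omega\phi_k\,\dd\x$ up to $o(1)$, since $\int_\Omega G\,\dd\x=0$ and the filament volume is $\mathcal{O}(\eps^2)$.

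\emph{Matching.} Fix $\X_j(s_j)$ and split $G$ as in \eqref{sec5:decomp}. Add and subtract $B_{kj}(s_j)$ in the singular integral, and use \eqref{sec2:csing} together with the definition \eqref{sec5:intrinsic} of $\mathcal{E}_j$. This gives the analogue of \eqref{sec5:unit_3}: the local expansion of $\phi_k$ near $\X_j(s_j)$ is $B_{kj}\log\eta+\bar\phi_k-(\mathcal{K}_jB_{kj}-B_{kj}/\nu_j)-\sum_{i\ne j}\mathcal{G}_{ji}B_{ki}$. Equating the $\mathcal{O}(1)$ terms with the inner far field $\delta_{jk}+B_{kj}/\nu_j$ gives $\mathcal{K}_jB_{kj}+\sum_{i\neq j}\mathcal{G}_{ji}B_{ki}=\bar\phi_k-\delta_{jk}$, which is \eqref{eq:split-IE}.

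The step requiring the most care is the local expansion of the Green's function sum near the curve, namely \eqref{sec2:csing} and the existence of $\mathcal{E}_j$. That part is already established in Appendix~\ref{app:curve} and reused verbatim, so what remains is bookkeeping of the constant shift $\delta_{jk}$ and the zero-flux constraint.

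A consistency check is that $\sum_k\phi_k\equiv1$. Summing \eqref{eq:split-IE} over $k$, the choice $\sum_k B_{kj}=0$ and $\sum_k\bar\phi_k=1$ solves the summed system, provided the coupled system is uniquely solvable for small $\nu_j$. That unique solvability follows from the dominant diagonal term $1/\nu_j$.
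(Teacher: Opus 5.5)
Your derivation is correct and follows the same route as the paper. The paper only states that the MFRT analysis carries over verbatim, with two changes: the homogeneous equation turns the flux constraint into $\sum_j\int_{\Gam_j}B_{kj}\,\dd s_j=0$, and the Robin condition contributes the constant $\delta_{jk}$ to the inner far field. Your substitution $w=\phi_k-\delta_{jk}$ and the subsequent matching spell out exactly those two changes. The closing check $\sum_k\phi_k\equiv 1$ is not in the paper and is fine as a heuristic, but unique solvability does not literally follow from the $1/\nu_j$ diagonal dominance. The finite-part term in $\mathcal{K}_j$ contributes roughly $-\log n$ on a mode of wavenumber $n$, so it overtakes $1/\nu_j$ at wavelengths of order $\eps d_{cj}$, and invertibility holds only on slowly varying densities.
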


\section{Closed-form solution: A concentric circular filament in a sphere}\label{sec2:conc_ring}

To illustrate our analysis, we first consider a simple example where
$\Om$ is the unit sphere, with $|\Om|={4\pi/3}$, and where $\Gam$ is a
simple closed circular ring of radius $r_0<1$ in the equatorial plane
centered at the origin, so that $L=|\Gam|=2\pi r_0$ (see
Fig.~\ref{fig:sphere_tube} for a schematic). In this case,
it is shown in Appendix \ref{appa:der_loop} that its intrinsic
self-energy has the constant value ${\mathcal E}=2\log(8r_0)$.

\begin{figure}[h!tbp]
  \centering
      \begin{subfigure}[b]{0.48\textwidth}  
        \includegraphics[width=\textwidth, trim={90bp 90bp 90bp
            90bp}, clip]{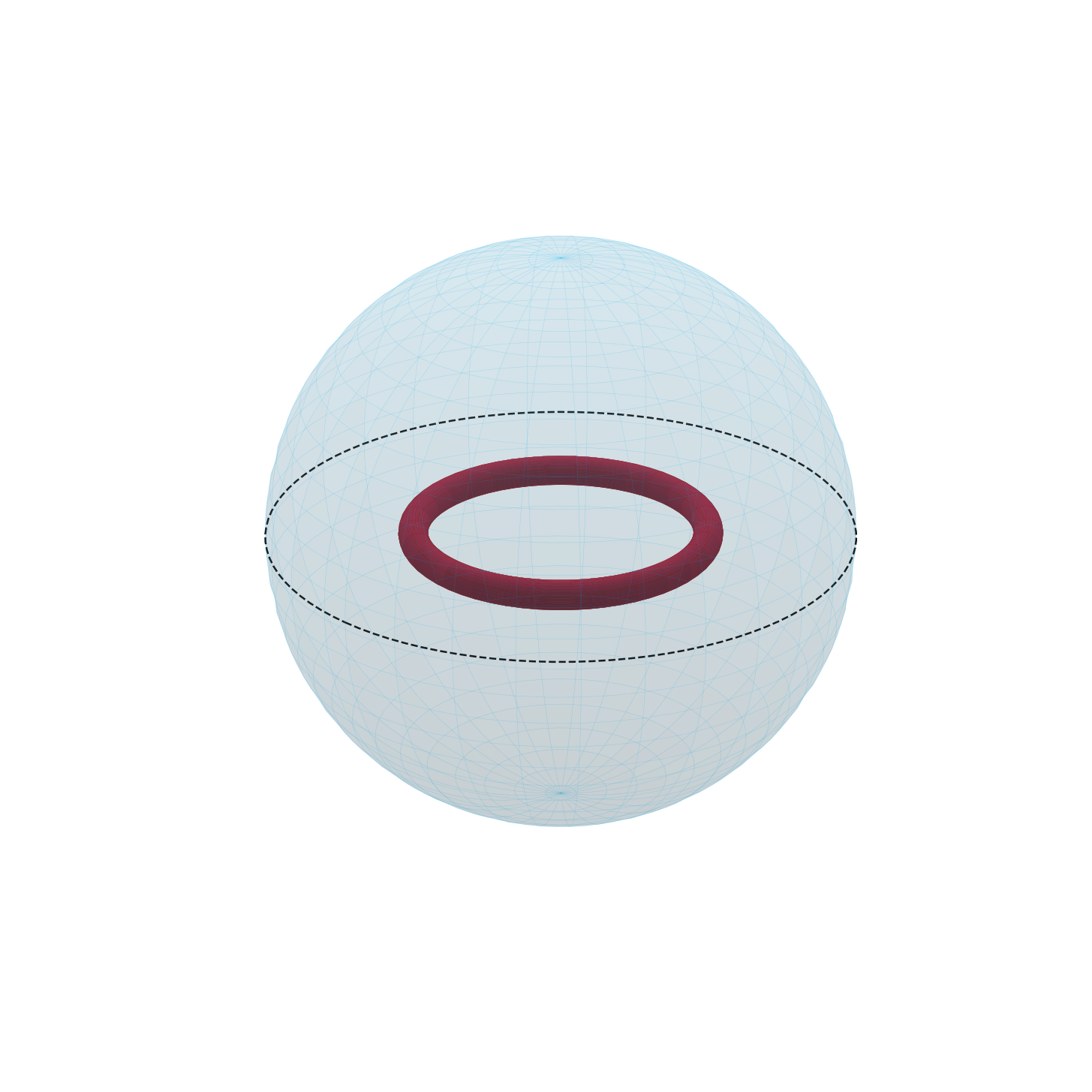}
        \caption{Unit sphere with filament}
        \label{fig:sphere_tube}
    \end{subfigure}
      \begin{subfigure}[b]{0.435\textwidth}
            \includegraphics[width=\textwidth, trim={90bp 90bp 90bp
            90bp}, clip]{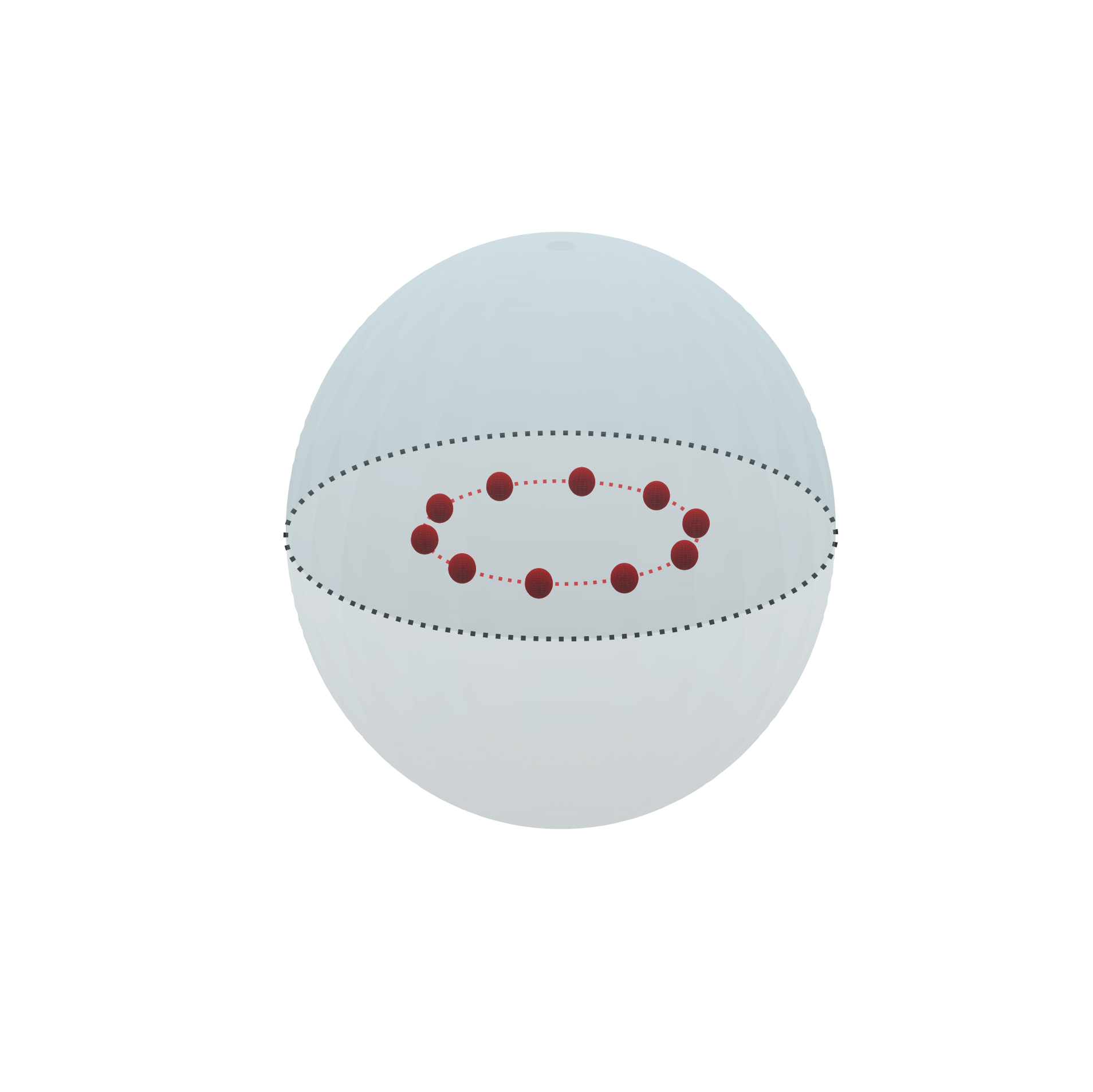}
      \caption{Unit sphere with balls}
      \label{fig:sphere_balls}
    \end{subfigure}
    \caption{Left: The unit sphere with a thin filament (torus) of
      circular cross-section with radius $\varepsilon=0.05$ centered
      concentrically on the equatorial plane. The distance from the
      centerline of the filament to the origin is $r_0=0.5$. Right: $N=10$
      small spheres of a common radius $\sigma=0.05$
      equidistantly spaced on a concentric ring of radius $r_0=0.5$ within
      the unit sphere.}
    \label{fig:sphere_tube_balls}
\end{figure}

For the unit sphere, the regular part $R(\x;\xi)$ of the Neumann
Green's function $G(\x;\bxi)$ satisfying \eqref{intro:Gdef}, as
derived in Appendix A of \cite{ChevWard2010}, is
\begin{equation}\label{gr:sphere}
  R(\x;\bxi) = \frac{1}{4\pi |{\x/|\x|}-|\x| \bxi| }
  + \frac{1}{4\pi} \log\left(\frac{2}{1 - \x {\bf \cdot} \bxi +
    |{\x/|\x|}- |\x|\bxi| } \right)  + \frac{1}{8\pi} \left(
 |\x|^2 + |\bxi|^2 \right)  -\frac{7}{10\pi} \,.
\end{equation}

Since the ring lies concentrically on the equatorial plane, it follows
by symmetry that the line density $A(s)$ is independent of position
along the ring. As a result, since $A(s)=A$ for all $s\in [0,L]$, we
obtain from the constraint \eqref{sec2:cons} that
\begin{equation}\label{sec3:aring}
  A = \frac{|\Om|}{2\pi D |\Gam|} = \frac{1}{3\pi D r_0} \,.
\end{equation}
As a result, we obtain from \eqref{prop:inteq} upon using
${\mathcal E}=2\log(8 r_0)$ that
\begin{equation}\label{sec3:chi_ring}
  \bar{u}\sim  \chi = \frac{1}{3\pi D}
  \left[ \frac{1}{r_0} \left(-\log(\eps d_c)\right)
    + \frac{\log(8r_0)}{r_0} + {\mathcal J} \right] \,,
  \quad \mbox{where} \quad {\mathcal J} \equiv \frac{2\pi}{r_0} \int_{\Gam}
  R\left( \X(s);\X(s^{\prime})\right) \dd s^{\prime}\,,
\end{equation}
where $\X(s)=r_0\left(\cos({s/r_0}),\sin({s/r_0}),0\right)^{T}$ is the
parameterization of the circular ring in terms of arclength $s$. By
symmetry it follows that the integral ${\mathcal J}$ is independent of $s$.

To evaluate ${\mathcal J}$ it is convenient to introduce $\phi$ and
$\phi^{\prime}$ by $s=r_0\phi$ and $s^{\prime}=r_0\phi^{\prime}$, so that
\begin{equation}\label{sec3:I}
   {\mathcal J} = 2\pi \int_{0}^{2\pi}
  R\left( \X(r_0\phi);\X(r_0\phi^{\prime})\right) \dd \phi^{\prime} \,.
\end{equation}

To evaluate $R\left( \X(r_0\phi);\X(r_0\phi^{\prime})\right)$
from
\eqref{gr:sphere} we first calculate that
\begin{equation*}
  \X(r_0\phi) {\bf \cdot} \X(r_0\phi^{\prime}) = r_0^{2}
  \cos\left(\phi-\phi^{\prime}\right) \,, \qquad
  \left|\frac{\X(r_0\phi)}{|\X(r_0\phi)|} - \X(r_0\phi^{\prime}) \,|\X(r_0\phi)| \right|
  =  \left( r_0^{4} + 1 - 2 r_0^2 \cos\left(\phi-\phi^{\prime}\right)\right)^{1/2}\,,
\end{equation*}
so that from \eqref{gr:sphere} we get
\begin{equation}\label{sec3:r1}
  \begin{split}
  2\pi  R\left( \X(r_0\phi);\X(r_0\phi^{\prime})\right) &=
  \frac{1}{2 \left(r_{0}^4 + 1 - 2 r_0^2 \cos\left(\phi-\phi^{\prime}\right)
    \right)^{1/2}} + \frac{1}{2}\left(r_0^2 - \frac{14}{5} + \log{2}\right) \\
     & \qquad -\frac{1}{2}
  \log\left[ 1 - r_0^2 \cos\left(\phi-\phi^{\prime}\right) +
    \left(r_{0}^4 + 1 - 2 r_0^2 \cos\left(\phi-\phi^{\prime}\right)
    \right)^{1/2}\right] \,.
  \end{split}
\end{equation}
Upon substituting \eqref{sec3:r1} into \eqref{sec3:I} we get
\begin{equation}\label{sec3:r2}
  {\mathcal J} = \frac{1}{2} I(r_0) + \pi \left(r_0^2 -\frac{14}{5} +
  \log{2}\right) - \frac{1}{2} J(r_0)\,,
\end{equation}
where we have defined
\begin{equation}\label{sec3:I0J0}
  I(r_0) \equiv \int_{0}^{2\pi} \frac{\dd\psi}{\left(r_0^4 + 1 - 2r_0^2\cos\psi
    \right)^{1/2}}  \,, \quad
  J(r_0) \equiv \int_{0}^{2\pi}  \log\left[ 1 - r_0^2 \cos\psi +
    \left(r_{0}^4 + 1 - 2 r_0^2 \cos\psi\right)^{1/2}\right] \dd\psi\,.
\end{equation} 

Next, we evaluate $I(r_0)$ in terms of the complete elliptic
integral of the first kind of modulus $m$: $K(m)\equiv
\int_{0}^{\pi/2} {d\phi/\sqrt{1-m^2\sin^2\phi}}$.  To do so, we let
$\cos\psi=2\cos^2\left({\psi/2}\right)-1$ and then set
$\phi={\psi/2}$. We obtain that
\begin{equation}\label{sec3:I01}
  I(r_0) = 4 \int_{0}^{\pi/2} \frac{\dd\phi}{\left[ (r_0^2+1)^2 -
      4r_0^2 \cos^2\phi \right]^{1/2}} = \frac{4}{1+r_0^2} K\left(
  \frac{2 r_0}{1+r_0^2} \right) = 4 K\left( r_0^2\right) \,,
\end{equation}
where we used the Landen transformation to obtain the last equality in
\eqref{sec3:I01}.

Upon using \eqref{sec3:I01} in \eqref{sec3:r2} and recalling
\eqref{sec3:chi_ring}, we conclude that
\bsub \label{sec3:ubar}
\begin{equation}
  \bar{u}\sim \chi = \frac{1}{3\pi D} {\mathcal F}_{\eps}(r_0) \,,
  \label{sec3:ubar_1}
\end{equation}
where ${\mathcal F}_{\eps}(r_0)$ is defined by
\begin{equation}  \label{sec3:ubar_2}
  {\mathcal F}_{\eps}(r_0) \equiv
  \frac{\log(a_{\eps} r_0)}{r_0} + 2
  K\left(r_0^2\right) + \pi \left(r_0^2 + \log{2} -\frac{14}{5}
  \right) - \frac{1}{2} J(r_0) \,, \quad \mbox{with} \quad a_{\eps}\equiv
  \frac{8}{\eps d_c} \,.
\end{equation}
\esub Here $J(r_0)$ as defined in \eqref{sec3:I0J0}. From an
evaluation of $J(r_0)$ by the trapezoidal rule, in
Fig.~\ref{fig:ring_a}A we plot ${\mathcal F}_{\eps}(r_0)$ on ${\mathcal
  O}(\eps)\ll r_0 \ll 1-{\mathcal O}(\eps)$ for several values of
$\eps d_c$. This numerical evidence suggests that there is an optimum
ring radius $r_{0}=r_{0\min}$ at which ${\mathcal F}_{\eps}(r_0)$ is
minimized. From \eqref{sec3:ubar_1}, this optimum value determines the
optimal location of a concentric equatorial ring that minimizes the
global MFRT $\bar{u}$ in the unit sphere to all logarithmic
orders.

\begin{figure}[h!tbp]
      \centering
      \includegraphics[width = 0.95\textwidth]{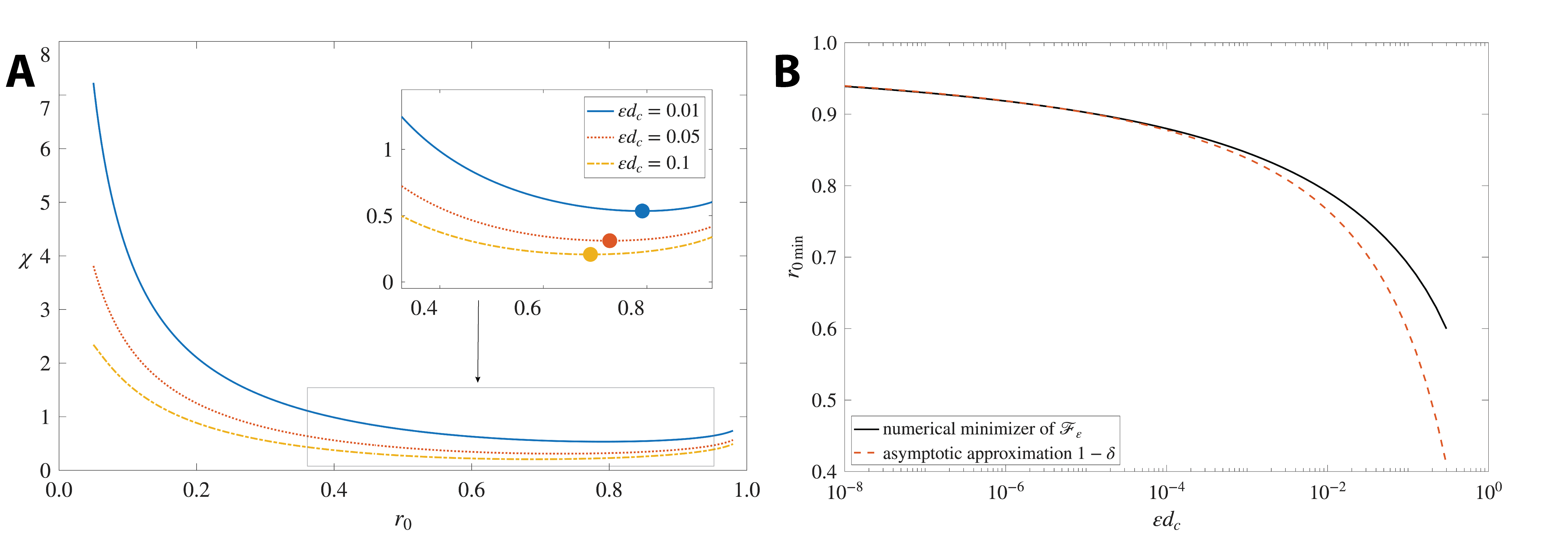}
\caption{
The exactly solvable one-ring case. Panel A: The MFRT
$\bar{u}\sim \chi$, defined in \eqref{sec3:ubar}, versus the ring
radius $r_0$ for three values of $\varepsilon d_c$ when $D=1$. The
minimizing radii $r_{0\min}$ is highlighted for each case by a solid
dot . Panel B: The minimizing ring radius $r_{0\min}$ against
$\varepsilon d_c$ from the numerical minimization of ${\mathcal
F}_{\eps}$ (solid line) and the asymptotic approximation $r_{0\min}
\sim 1 - \delta$ (dashed line) given in Proposition
\ref{prop:opt_ring}.} \label{fig:ring_a}
\end{figure}

In Appendix \ref{appr:optim} we  prove that ${\mathcal F}_{\eps}(r_0)$ 
has a unique minimum $r_{0\min}$ on 
${\mathcal O}(\eps)\ll r_0 \ll 1-{\mathcal O}(\eps)$ and determine 
an approximation to $r_{0\min}$ for $\eps d_c\ll 1$. The result
derived in Appendix \ref{appr:optim} is summarized as follows:

\begin{proposition}\label{prop:opt_ring} In the limit $\eps\to 0$, there
  is a unique centerline radius $r_0=r_{0\min}$ of a concentric
  equatorial filament that minimizes the global MFRT inside the unit
  sphere. The optimal MFRT is
  \begin{equation}\label{prop:opt_ring_mfpt}
    \bar{u}_{\min}\sim \chi_{\min} = \frac{1}{3\pi D} {\mathcal
      F}_{\eps}(r_{0\min}) \quad \mbox{with} \quad {\mathcal
      F}_{\eps}(r_0) \equiv \frac{\log(a_{\eps} r_0)}{r_0} + 2
    K\left(r_0^2\right) + \pi \left(r_0^2 + \log{2} -\frac{14}{5}
    \right) - \frac{1}{2} J(r_0) \,,
  \end{equation}
  where $r_{0min}$ is the unique root on ${\mathcal
    O}\left({1/a_{\eps}}\right)\ll r_0\ll 1-{\mathcal
    O}\left({1/a_{\eps}}\right)$ of ${\mathcal C}_{\eps}(r_0)=0$ given
  in \eqref{sec3:fp}.  Here $J(r_0)$ is defined in \eqref{sec3:I0J0}
  and $a_{\eps}={8/(\eps d_c)}$. For $a_{\eps}\gg 1$, we have the
  explicit approximation
  \begin{equation}\label{prop:opt_r0}
    r_{0\min} \sim 1 - \frac{1}{\log{a_{\eps}}} -
    \frac{\left[\log(4 \log a_\eps) +{1/2}\right]}{(\log{a_{\eps}})^2}
    \,.
  \end{equation}
\end{proposition}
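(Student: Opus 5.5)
The plan is to reduce the minimization of ${\mathcal F}_{\eps}$ in \eqref{prop:opt_ring_mfpt} to a scalar root-finding problem. I would write
\[
{\mathcal C}_{\eps}(r_0)\equiv r_0^2{\mathcal F}_{\eps}^{\prime}(r_0)=1-\log(a_{\eps}r_0)+H(r_0),\qquad H(r_0)\equiv r_0^2\Big(4r_0K^{\prime}(r_0^2)+2\pi r_0-\tfrac12 J^{\prime}(r_0)\Big).
\]
Here $K^{\prime}$ denotes the derivative of $K$ with respect to its modulus, so that $\frac{\dd}{\dd r_0}2K(r_0^2)=4r_0K^{\prime}(r_0^2)$. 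The function $H$ is independent of $\eps$, and $a_{\eps}$ enters only through the constant $\log a_{\eps}$. The critical points of ${\mathcal F}_{\eps}$ are then exactly the solutions of
\[
\log a_{\eps}=Q(r_0)\equiv 1+H(r_0)-\log r_0 .
\]

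\textbf{Existence.} I would study the end behaviour of $Q$.
\begin{itemize}
\item As $r_0\to0$, $K(r_0^2)=\frac{\pi}{2}(1+\frac14 r_0^4+\cdots)$. Expanding the integrand of $J$ in powers of $r_0^2\cos\psi$ shows $J^{\prime}(r_0)=\mathcal O(r_0^3)$. Hence $H=\mathcal O(r_0^3)$ and $Q\sim-\log r_0\to+\infty$.
\item As $r_0\to1^-$, put $\delta=1-r_0$, so that $1-r_0^4\sim4\delta$. The classical expansion $K(m)\sim\log\big(4/\sqrt{1-m^2}\big)$ gives $K(r_0^2)\sim\log2-\frac12\log\delta$. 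Then $4r_0K^{\prime}(r_0^2)\sim1/\delta$ and $Q\to+\infty$.
\end{itemize}
Since $\log a_{\eps}\gg1$, a root exists in the stated window. Because ${\mathcal C}_{\eps}<0$ for small $r_0$ and ${\mathcal C}_{\eps}>0$ near $r_0=1$, at least one of the roots is a minimum.

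\textbf{Uniqueness.} I would show that $Q$ has a single interior minimum, so that for $\log a_{\eps}$ large each of the two branches is hit at most once. The root on the decreasing branch near $r_0=\mathcal O(1/a_{\eps})$ is excluded by the window hypothesis of the proposition, which is why the statement restricts to $\mathcal O(1/a_{\eps})\ll r_0$.

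Concretely, I would expand $K(r_0^2)=\frac{\pi}{2}\sum_n c_n r_0^{4n}$ with $c_n>0$. For $J$, I would write the integrand as $\log\big[1-r_0^2\cos\psi+|1-r_0^2e^{i\psi}|\big]$ and derive a power series in $r_0^2$ by expanding in Legendre-type generating functions. Using these, I would show that $H^{\prime}$ is increasing, so that $Q^{\prime}=H^{\prime}-1/r_0$ changes sign exactly once. This convexity/sign argument is the step I expect to be the main obstacle, because the coefficients of $J$ need not all have a definite sign. If a clean series proof fails, I would instead bound $J^{\prime}$ explicitly on a middle interval and handle the two end regions by the asymptotics above.

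\textbf{Asymptotic location.} For $a_{\eps}\gg1$ the relevant root has $\delta\to0$, so I would expand ${\mathcal C}_{\eps}$ at $r_0=1-\delta$ to the necessary order:
\begin{itemize}
\item Using $K(m)=\log(4/k^{\prime})+\mathcal O(k^{\prime 2}\log k^{\prime})$ with $k^{\prime}=\sqrt{1-r_0^4}$, I get $4r_0K^{\prime}(r_0^2)=1/\delta+\mathcal O(\log\delta)$.
\item I would check that $J^{\prime}(1)$ and the remaining terms are finite at $r_0=1$, or at worst $\mathcal O(\log\delta)$, and compute their constant contribution. At $r_0=1$ the integrand of $J$ reduces to $\log[1-\cos\psi+2\sin(\psi/2)]$.
\end{itemize}
The balance is then $1/\delta=\log a_{\eps}-c-\beta\log\delta+\cdots$ for explicit constants $c$ and $\beta$. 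Iterating once gives $\delta\sim1/\log a_{\eps}+\big[\log(4\log a_{\eps})+\tfrac12\big]/(\log a_{\eps})^2$. This yields \eqref{prop:opt_r0}, provided the computed $c$ and $\beta$ reproduce the constants $\log4$ and $1/2$; confirming these is the bookkeeping part of this step.
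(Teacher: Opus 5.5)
You reduce the problem correctly to $\log a_\eps=Q(r_0)$ with an $\eps$-independent $Q$. Proving uniqueness through unimodality of $Q$ is a legitimate alternative to the paper's argument, which instead shows that $r_0{\mathcal F}_\eps'$ is increasing once $a_\eps r_0>e^2$, using $E(m)-(1-m^2)K(m)>0$.

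\textbf{The gap.} The proposal stops short of the one computation that carries the proof: you have no usable expression for $J'$. As a result, three things remain unproved. First, the convexity of $H$, which you yourself flag as the main obstacle; the fallback of bounding $J'$ numerically on a middle interval is not an argument. Second, the constants $\log 4$ and $\tfrac12$ in \eqref{prop:opt_r0}. Third, the endpoint claim $Q\to+\infty$ as $r_0\to1^-$ silently assumes control of $J'$. Your guess that $J'$ might be finite at $r_0=1$ is also wrong: it diverges like $4\log\delta$.

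\textbf{The missing idea.} It is the closed form
\[
J'(r_0)=\frac{4\pi}{r_0}\Big(1-\frac{2}{\pi}K(r_0^2)\Big).
\]
Set $g=1-r_0^2\cos\psi+C$ with $C=(r_0^4-2r_0^2\cos\psi+1)^{1/2}$. The formula follows from the algebraic identity $\partial_{r_0}g/g=2/r_0-2/(r_0C)$ together with $\int_0^{2\pi}C^{-1}\,d\psi=4K(r_0^2)$ from \eqref{sec3:I01}. Combine it with $\frac{d}{dr_0}K(r_0^2)=\frac{2E(r_0^2)}{r_0(1-r_0^4)}-\frac{2K(r_0^2)}{r_0}$. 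Then all $K$-terms cancel and
\[
H(r_0)=\frac{4r_0E(r_0^2)}{1-r_0^4}+2\pi r_0(r_0^2-1)\,.
\]
Hence your $r_0^2{\mathcal F}_\eps'$ equals $r_0$ times the function ${\mathcal C}_\eps$ of \eqref{sec3:fp}. Note the notational clash: you use ${\mathcal C}_\eps$ for $r_0^2{\mathcal F}_\eps'$, while the proposition refers to the explicit function in \eqref{sec3:fp}, which you never reach.

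\textbf{Uniqueness once this is in hand.} Your route then closes, and your worry about indefinite signs is unfounded. Write $K(k)=\frac{\pi}{2}\sum_{n\ge0}c_nk^{2n}$ with $c_n=\big(\frac{(2n)!}{2^{2n}(n!)^2}\big)^2>0$. The formula gives $-\frac12J'(r_0)=2\pi\sum_{n\ge1}c_nr_0^{4n-1}$, and hence
\[
H(r_0)=2\pi r_0^3+2\pi\sum_{n\ge1}(2n+1)c_nr_0^{4n+1},
\]
a series with positive coefficients. Therefore $H''>0$ on $(0,1)$, and $Q'=H'-1/r_0$ is strictly increasing. For large $a_\eps$, ${\mathcal F}_\eps$ then has exactly two critical points on $(0,1)$: a local maximum at $r_0={\mathcal O}(1/a_\eps)$ and the minimum in the window. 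This global statement is slightly stronger than the paper's window-restricted monotonicity.

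\textbf{The asymptotic step.} The same formula is what this step needs. The logarithmic divergence of $-\frac12J'$ exactly cancels the $-4K(r_0^2)/r_0$ part of $4r_0K'(r_0^2)$. So differentiating $K(m)=\log(4/k')+{\mathcal O}(k'^2\log k')$, which only yields $1/\delta+{\mathcal O}(\log\delta)$, cannot produce the subleading constants. They come instead from
\[
H(1-\delta)=\frac{(1-\delta)E(r_0^2)}{\delta(1-3\delta/2)}-4\pi\delta
\]
up to higher-order corrections, and from $E(m)\sim1-\frac{1-m}{2}\log(1-m)+\frac{1-m}{2}(3\log2-1)$. These give $\log a_\eps=\frac1\delta+\log\frac4\delta+\frac12+{\mathcal O}(\delta\log\delta)$, and one iteration then yields \eqref{prop:opt_r0}.
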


Although the optimal ring radius tends to the boundary of the unit
sphere as $\eps$ decreases, the requirement that $r_{0\min}\ll 1
-{\mathcal O}\left({1/a_{\eps}}\right)$ is never violated.  In
addition, \eqref{prop:opt_r0} shows that the optimal ring radius
depends on the local geometry and reactivity of the filament
cross-section through its logarithmic reactive capacitance
$d_{c}=d_{c}(b)$. Finally, we remark that as $r_0$ increases the
surface area of the filament $4\pi^2\eps r_0$ increases, but is still
${\mathcal O}(\eps)$ for any $r_0={\mathcal O}(1)$.

In Table \ref{tab:mfpt_optim} we show that the approximate root
\eqref{prop:opt_r0} provides an increasingly adequate approximation to
the numerically computed root of ${\mathcal C}_{\eps}(r_0)=0$ as $\eps
d_c$ decreases. The prediction for the minimum of the MFRT as obtained
from \eqref{prop:opt_r0} and the numerically computed root are also
favorably compared in Table \ref{tab:mfpt_optim}. See also the agreement shown in Fig.~\ref{fig:ring_a}B.
             
\begin{table}[htbp]
    \centering
        \begin{tabular}{rcccccc}
        \hline
  $\eps d_c$ & $r_{0\min}$ (num) & $r_{0\min}$ (asy) & $\%$ error for $r_{0\min}$ & $\chi_{\min}$ (num) & $\chi_{\min}$ (asy) \\ \hline
  0.05  & 0.7290  & 0.6667  & 8.55$\%$  &  0.3109  & 0.3187  \\ 
  0.02  & 0.7680  & 0.7307  & 4.86$\%$  &  0.4406  & 0.4439  \\ 
  0.01  & 0.7914  & 0.7657  & 3.25$\%$  &  0.5349  & 0.5367  \\ 
 0.005  & 0.8110  & 0.7931  & 2.21$\%$  &  0.6267 &  0.6277   \\
 0.0025 & 0.8276  & 0.8151  & 1.52$\%$  &  0.7164 &  0.7170  \\ \hline
    \end{tabular}
    \caption{Comparison of the asymptotic result \eqref{prop:opt_r0} for the
      optimal ring radius with the numerically computed root of
      ${\mathcal C}_{\eps}(r_0)=0$, given in \eqref{sec3:fp}, for several
      values of $\eps d_c$. The last two columns show a favorable comparison
      for the optimal MFRT given in \eqref{prop:opt_ring_mfpt} with either
      the approximate root or the numerically computed root.}
    \label{tab:mfpt_optim}
\end{table}

\subsection{Optimization of a ring pattern of small spheres}\label{sec3:ball}

We now compare results from our previous analysis for the MFRT for a
thin equatorial filament in the unit sphere to a related problem where
there are $N$ small identical perfectly absorbing targets
equidistantly spaced on a concentric ring of radius $r_0$ within the
unit sphere. A schematic of the geometry is shown in
Fig.~\ref{fig:sphere_balls} when the targets are small spheres.  In a
3-D domain punctured by $N$ such small targets, the MFRT $u(\x)$
satisfies 
\bsub
\begin{gather}
  \Delta u = -\frac{1}{D} \,,  \quad \x \in \Omega\setminus \Omega_a 
   \,, \label{ball:ssp_1}\\
  \partial_n u  = 0 \,,  \quad \x\in \partial \Omega \,; \qquad
        u = 0\,, \quad \x \in \partial \Omega_a \equiv \cup_{j=1}^{N} 
 \partial \Omega_{\sigma,j} \,, \label{ball:ssp_2}
\end{gather}
\esub 
where $\Omega_{\sigma,j}$ is a small 3-D target of diameter ${\mathcal
O}(\sigma)$ centered at $\x_j\in \Omega$, and $\Omega_a =
\cup_{j=1}^{N} \Omega_{\sigma,j}$.  We assume that the
targets are well-separated in the sense that $\dist(\x_j,\x_k)\gg
{\mathcal O}(\sigma)\,$ $\forall j\ne k$ and
$\dist(\x_j,\partial\Omega)\gg {\mathcal O}(\sigma)$.  The volume
averaged MFRT, denoted by $\bar{u}$, is defined by
\begin{equation} 
  \bar{u} \equiv \frac{1}{|\Omega\setminus \Omega_a|} \int_{\Omega\setminus
  \Omega_a} u(\x) \,
\dd \x \,. \label{ball:barv}
\end{equation}

For the small target limit $\sigma\to 0$, by using the method of matched
asymptotic expansions it was derived in Principal Result 3.1 of
\cite{ChevWard2010} that
\begin{equation}\label{ball:mfpt}
  \bar{u} \sim \chi_{\text{ball}} =
  \frac{|\Omega|}{4\pi D} \left[ \frac{1}{\sigma N C} +
    p(\x_1,\ldots,\x_N) + o(1) \right] \,; \quad
  p(\x_1,\ldots,\x_N) \equiv \frac{4\pi}{N^2} \sum_{j=1}^{N} \left(
  R(\x_j;\x_j) + \stacksum G(\x_i;\x_j) \right)\,,
\end{equation}
where $G(\x;\bxi)$ is the Neumann Green's function with regular part
$R(\bxi;\bxi)$ (see \eqref{intro:Gdef}). Here $C$ is the
capacitance of the rescaled target $\Omega_0 =
\Omega_{\sigma,j}/\sigma$, defined in terms of the far-field behavior
of the canonical local problem
\begin{equation}\label{ball:capac}
   \Delta_\y w_c =0 \,, \quad \y \in {\mathbb R}^{3}\setminus\Omega_0 \,; \qquad
   w_c = 1 \,, \quad \y \in \partial\Omega_0 \,; \qquad w_c =
   \frac{C}{|\y|} + {\mathcal O}(|\y|^{-2}) \quad \mbox{as} \quad
   |\y|\to \infty \,,
\end{equation}
 When $\Omega_0$ is the unit ball we have $C=1$, while $C={2/\pi}$ when
$\Omega_0$ is a flat disk of unit radius.

Now suppose that the $N$ identical traps are equidistantly spaced on a
concentric ring of radius $r_0$ within the unit sphere and are
centered at $\x_j=r_0\left( \cos\theta_j,\sin\theta_j,0\right)^{T}$
with $\theta_j={2\pi (j-1)/N}$ for $j\in \lbrace{1,\ldots,N\rbrace}$.
Since this pattern has a cyclic symmetry in the sphere,
\eqref{ball:mfpt} reduces to
\begin{equation}\label{ball:pnew}
  p(\x_1,\ldots,\x_N) = \frac{4\pi}{N} \left( R(\x_1;\x_1) +
  \sum_{j=2}^{N} G(\x_1;\x_j)\right) \,.
\end{equation}
From \eqref{gr:sphere} we calculate that
\begin{equation}\label{ball:R}
  4\pi R(\x_1,\x_1) = \frac{1}{(1-r_0^2)} -\log(1-r_0^2) + r_0^2-\frac{14}{5}
  \,,
\end{equation}
while for $j\in \lbrace{2,\ldots,N\rbrace}$, we have
\begin{equation}\label{ball:G}
  4\pi G(\x_1;\x_j) = \frac{1}{|\x_1-\x_j|} + \frac{1}{r_{c,j}}
  + \log\left( \frac{2}{1-\x_1 {\bf \cdot} \x_j + r_{c,j}}\right) +
  \frac{1}{2}\left(|\x_1|^2 + |\x_j|^2\right) - \frac{14}{5} \quad
  \mbox{where} \quad r_{c,j} \equiv \big{\vert} \frac{\x_1}{|\x_1|} -
  \x_j |\x_1| \big{\vert} \,.
\end{equation}
Since $\theta_1=0$, we calculate that
\begin{equation}\label{ball:G_1}
  \x_1 {\bf \cdot} \x_j=r_{0}^2\cos(\theta_j) \,, \qquad
  |\x_1-\x_j|=2 r_0\sin\left( \frac{\theta_j}{2}\right) \,, \qquad
  r_{c,j} =\sqrt{r_0^4 - 2 r_0^2 \cos\theta_j + 1} \,.
\end{equation}
In order that the small traps are well-separated on the ring, we require
that $2\sigma N\ll 2\pi r_0$, so that $\sigma N\ll \pi r_0$.

Upon substituting \eqref{ball:G_1} into \eqref{ball:G}, and absorbing
the regular part \eqref{ball:R} into the $j=1$ term, \eqref{ball:pnew}
becomes solely a function of $N$ and the ring radius $r_0$. We
summarize our result as follows:

\begin{proposition}\label{ball:prop} 
For $N$ identical small targets equidistantly spaced on a concentric
ring of radius $r_0$ inside the unit sphere, the volume-averaged MFRT
is given for $\sigma\to 0$ by
\bsub \label{ball:finalN}
\begin{equation}\label{ball:finalN1}
     \bar{u} \sim \chi_{\text{ball}} =
  \frac{|\Omega|}{4\pi D} \left[ \frac{1}{\sigma N C} +
    P_{N}(r_0) + o(1) \right] \,,
\end{equation}
where $|\Omega|={4\pi/3}$ and
\begin{equation}\label{ball:pfinal1}
   P_{N}(r_0) \equiv  r_0^2 -\frac{14}{5} + \log{2}
  + \frac{{\mathcal S}_{N}}{2r_0 N} + \frac{1}{N} \left( \sum_{j=1}^{N}
  \frac{1}{r_{c,j}} - \log\left(1-r_0^2 \cos\theta_j + r_{c,j} \right)
  \right)\,.
\end{equation} 
Here $r_{c,j}$ is given in \eqref{ball:G_1}, $\theta_j=2\pi{(j-1)/N}$
for $j\in \lbrace{1,\ldots,N\rbrace}$, and ${\mathcal S}_{N}$ is the
discrete sum
\begin{equation}\label{ball:sn}
  {\mathcal S}_{N} \equiv \sum_{j=1}^{N-1} \frac{1}{\sin\left({\pi j/N}
    \right)} \,.
\end{equation}
\esub 
\end{proposition}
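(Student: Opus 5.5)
The plan is to substitute the ring configuration directly into the general small-target formula \eqref{ball:mfpt} of \cite{ChevWard2010}, which we take as given. By cyclic symmetry this formula has already been reduced to \eqref{ball:pnew}. It therefore remains only to show that $\frac{4\pi}{N}\big(R(\x_1;\x_1)+\sum_{j=2}^N G(\x_1;\x_j)\big)$ equals $P_N(r_0)$ in \eqref{ball:pfinal1}. The strategy is to split every term of \eqref{ball:R} and \eqref{ball:G} into four pieces: a free-space piece, an image piece $1/r_{c,j}$, a logarithmic piece, and a constant/quadratic piece. Each piece is then summed separately.

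\textbf{Step 1 (the $j=1$ term absorbed into the sum).} Setting $\theta_1=0$ in \eqref{ball:G_1} gives $r_{c,1}=1-r_0^2$ and $1-r_0^2\cos\theta_1+r_{c,1}=2(1-r_0^2)$. Hence
\[
\frac{1}{r_{c,1}}+\log\frac{2}{1-r_0^2\cos\theta_1+r_{c,1}}+r_0^2-\frac{14}{5}
\]
reproduces \eqref{ball:R} exactly. So $4\pi R(\x_1;\x_1)$ is precisely the $j=1$ member of the image, log and constant parts of \eqref{ball:G}, just without the free-space term $1/|\x_1-\x_j|$. This matches the paper's remark about "absorbing the regular part into the $j=1$ term."

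\textbf{Step 2 (summing the non-singular parts).} Using $|\x_1|=|\x_j|=r_0$, each of the $N$ terms contributes $r_0^2-\tfrac{14}{5}+\log 2$ from the constant, quadratic and $\log 2$ parts. Dividing by $N$ gives $r_0^2-\tfrac{14}{5}+\log 2$. What remains of these parts is $\frac1N\sum_{j=1}^N\big(r_{c,j}^{-1}-\log(1-r_0^2\cos\theta_j+r_{c,j})\big)$, as in \eqref{ball:pfinal1}.

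\textbf{Step 3 (the free-space sum).} By \eqref{ball:G_1}, $|\x_1-\x_j|=2r_0\sin(\pi(j-1)/N)$. Reindexing $k=j-1$ gives
\[
\sum_{j=2}^N\frac{1}{|\x_1-\x_j|}=\frac{1}{2r_0}\,{\mathcal S}_N .
\]
After the factor $1/N$ this yields ${\mathcal S}_N/(2r_0N)$.

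Combining Steps 1–3 with the prefactor $|\Omega|/(4\pi D)$ gives \eqref{ball:finalN1}. The $o(1)$ error and the requirement $\sigma N\ll\pi r_0$ are inherited from the hypotheses of \eqref{ball:mfpt}, namely well-separated traps that stay away from $\partial\Omega$.

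The main obstacle is mostly bookkeeping. The delicate step is Step 1: one must confirm that the $j=1$ evaluation of the image and log terms coincides with \eqref{ball:R}, including the sign of the logarithm and the $\log 2$ constant. Otherwise a spurious term survives. I would check this first, by direct substitution of $\theta_1=0$.
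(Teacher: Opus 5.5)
Your proposal is correct and follows the same route as the paper: substitute \eqref{ball:G_1} into \eqref{ball:G} and absorb \eqref{ball:R} as the $j=1$ term. Your check that $r_{c,1}=1-r_0^2$ and $1-r_0^2+r_{c,1}=2(1-r_0^2)$ is exactly what makes this absorption work, and identifying the free-space sum with ${\mathcal S}_N/(2r_0)$ then completes it, so you have supplied the bookkeeping the paper leaves implicit.
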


Since there are no analytical formulas for the discrete sums in
\eqref{ball:pfinal1}, the optimal ring radius $r_{opt}$ for a given
$N$ is obtained by a numerical line search to determine the minimum of
$P_{N}(r_0)$ on $0<r_0<1$, with the results shown in Table
\ref{tab:ropt_ball}.  The numerical results in the second column of
Table \ref{tab:ropt_ball} suggest that $r_{opt}$ tends very slowly to
the boundary as $N$ increases. In order to estimate the optimal ring
radius for $N\gg 1$, we need the following lemma, as proved in
Appendix \ref{appa:sn}, for the behavior of $S_N$ for large $N$.

\begin{lemma}\label{lemma:sn} For $N\gg 1$, we have
  \begin{equation}\label{ball:sn_asy}
  {\mathcal S}_{N} = {\mathcal S}_{NA} -\frac{\pi}{36 N} + {\mathcal O}(N^{-3}) \,, 
   \qquad \mbox{where} \quad
   {\mathcal S}_{NA} \equiv   \frac{2N}{\pi} \left[ \log\left(\frac{2N}{\pi}
      \right) + \gamma_e \right]\,.
  \end{equation}
  Here $\gamma_e\approx 0.5772$ is Euler's constant.
\end{lemma}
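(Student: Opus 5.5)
We will derive the expansion of $\mathcal{S}_N=\sum_{j=1}^{N-1}1/\sin(\pi j/N)$ by the Euler--Maclaurin formula, after subtracting the endpoint singularities of $f(x)=1/\sin(\pi x)$ on $[0,1]$. By the symmetry $j\to N-j$,
\begin{equation*}
\mathcal{S}_N = 2\sum_{1\le j<N/2}\frac{1}{\sin(\pi j/N)} \quad(+1 \text{ if } N \text{ even}).
\end{equation*}
It is cleaner, however, to work on the full range with the decomposition
\begin{equation*}
\frac{1}{\sin(\pi x)} = \frac{1}{\pi x}+\frac{1}{\pi(1-x)} + g(x),
\end{equation*}
where $g$ is smooth (indeed analytic) on $[0,1]$. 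This follows from the partial fraction expansion $\pi/\sin(\pi x)=\sum_k(-1)^k/(x-k)$.

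\textbf{Singular part.} It contributes
\begin{equation*}
\frac{1}{\pi}\sum_{j=1}^{N-1}\Bigl(\frac{N}{j}+\frac{N}{N-j}\Bigr)=\frac{2N}{\pi}H_{N-1},
\end{equation*}
and we expand with $H_{N-1}=\log N+\gamma_e-\frac{1}{2N}-\frac{1}{12N^2}+\mathcal{O}(N^{-4})$.

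\textbf{Smooth part.} Apply the trapezoidal form of Euler--Maclaurin to $g$:
\begin{equation*}
\sum_{j=1}^{N-1} g(j/N)=N\int_0^1 g\,dx-\frac{g(0)+g(1)}{2}+\frac{g'(1)-g'(0)}{12N}+\mathcal{O}(N^{-3}).
\end{equation*}
The $N^{-2}$ term vanishes since Bernoulli odd terms are absent. The needed values are:
\begin{itemize}
\item[(i)] $g(0)=g(1)=0$, from the Laurent expansion $\frac{1}{\sin \pi x}=\frac{1}{\pi x}+\frac{\pi x}{6}+\dots$ together with $-\frac{1}{\pi(1-x)}$ evaluated near $x=0$, which gives $g(0)=-1/\pi$ and by symmetry $g(1)=-1/\pi$. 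We therefore recompute carefully, including this boundary term.
\item[(ii)] $g'(0)=\pi/6-1/\pi$, and $g'(1)=-g'(0)$ by the symmetry $g(x)=g(1-x)$.
\item[(iii)] $\int_0^1 g\,dx$, obtained from $\int_\delta^{1-\delta}\csc(\pi x)\,dx=\frac{2}{\pi}\log\cot(\pi\delta/2)$ minus $\frac{2}{\pi}\log((1-\delta)/\delta)$, letting $\delta\to0$. This gives $\frac{2}{\pi}\log(2/\pi)$.
\end{itemize}

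\textbf{Assembly.} Combining the pieces,
\begin{equation*}
\mathcal{S}_N=\frac{2N}{\pi}\Bigl(\log N+\gamma_e\Bigr)-\frac{1}{\pi}-\frac{1}{6\pi N}+\frac{2N}{\pi}\log\frac{2}{\pi}+\frac{1}{\pi}-\frac{1}{6N}\Bigl(\frac{\pi}{6}-\frac{1}{\pi}\Bigr)+\mathcal{O}(N^{-3}).
\end{equation*}
The constants $\mp 1/\pi$ cancel (the $-g(0)$ term supplies $+1/\pi$ against the $-\frac{2N}{\pi}\cdot\frac{1}{2N}$ from the harmonic sum), as do the $1/(6\pi N)$ terms. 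What remains is $\mathcal{S}_{NA}-\pi/(36N)$.

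\textbf{Main obstacle.} The delicate points are bookkeeping rather than conceptual. First, one must justify applying Euler--Maclaurin to $g$ on the closed interval, which is fine since $g$ extends analytically past the endpoints (the poles of $\csc$ at $0$ and $1$ are removed and the next poles lie at $-1$ and $2$). Second, the remainder must be shown to be $\mathcal{O}(N^{-3})$, which follows from the next Bernoulli term involving $g'''$ and being $\mathcal{O}(N^{-3})$. Third, the harmonic-number expansion must be tracked to order $N^{-2}$ so that all $1/N$ terms cancel correctly. I would double-check the final coefficient $-\pi/36$ numerically for moderate $N$.
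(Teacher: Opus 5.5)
Your proposal is correct and follows essentially the same route as the paper: the same subtraction $g(x)=\csc(\pi x)-\frac{1}{\pi x}-\frac{1}{\pi(1-x)}$, the harmonic-number expansion of $\frac{2N}{\pi}H_{N-1}$, and Euler--Maclaurin with $\int_0^1 g\,dx=\frac{2}{\pi}\log(2/\pi)$, $g(0)=g(1)=-1/\pi$ and $g'(0)=-g'(1)=\frac{\pi}{6}-\frac{1}{\pi}$. Delete the stray claim ``$g(0)=g(1)=0$'' at the start of item (i), which contradicts the value $-1/\pi$ you actually use; your observation that no $N^{-2}$ term appears (only even Bernoulli numbers occur, and $H_{N-1}$ has no $N^{-3}$ term) is what justifies the stated $\mathcal{O}(N^{-3})$ remainder, and is slightly sharper than the paper's own bookkeeping.
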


In Table \ref{tab:sn} we show that the simple approximation
${\mathcal S}_{NA}$ in \eqref{ball:sn_asy} compares very favorably
with the discrete sum ${\mathcal S}_N$ even for $N=4$.

\begin{table}[ht!]
\centering
\begin{tabular}{rccc}
   \hline
  $N$ & ${\mathcal S}_N$ &  ${\mathcal S}_{NA}$ & Percent error (\%) \\ \hline
4 & 3.828 & 3.850 & 0.566\% \\
6 & 7.309 & 7.324 & 0.198\% \\
8 & 11.219 & 11.230 & 0.097\% \\
10 & 15.450 &15.459 & 0.056\% \\
20 & 39.738 & 39.742 & 0.011\% \\
40 & 97.134 & 97.136 & 0.002\% \\
\hline 
\end{tabular}
\caption{Comparison between exact sum ${\mathcal S}_N$ and its
  large-$N$ asymptotics ${\mathcal S}_{NA}$ in \eqref{ball:sn_asy}. We
  observe that even for $N=4$, the error in the approximation is less
  than $1\%$.}
\label{tab:sn}
\end{table}

To analyze the large $N$ limit of \eqref{ball:finalN}, but under the
well-separated assumption that $\sigma N\ll \pi r_0$, we simply
approximate the discrete sums in \eqref{ball:pfinal1} by their
corresponding Riemann sums and we use ${\mathcal S}_{NA}$ for
${\mathcal S}_N$. The Riemann sums are identified by recalling the two
integrals $I(r_0)$ and $J_{0}(r_0)$ in \eqref{sec3:I0J0} and
\eqref{sec3:I01}. In this way, for large $N$ we conclude that
\bsub \label{ball:ucont}
\begin{equation}\label{ball:ularge}
  \bar{u}\sim \chi_{\text{ball}} = \frac{1}{3\pi D}
  \left( \frac{\pi}{\sigma N C} + {\mathcal F}_{N}(r_0) \right) \,,
\end{equation}
where ${\mathcal F}_{N}(r_0)$ is defined by
\begin{equation}\label{ball:fn}
 {\mathcal F}_{N}(r_0) \equiv \frac{\log(a_{N})}{r_0} + 2
    K\left(r_0^2\right) + \pi \left(r_0^2 + \log{2} -\frac{14}{5}
    \right) - \frac{1}{2} J(r_0) \,, \qquad a_{N}\equiv
    \frac{2N e^{\gamma_e}}{\pi} \,.
\end{equation} 
\esub

\begin{table}[ht]
\centering
\begin{tabular}{ccc}
\hline
 $N$  & $r_{\text{opt}}$ \eqref{ball:pfinal1}  & $r_{\text{opt}}$ \eqref{ball:fn} \\
\hline
2    & 0.4289     & 0.4798 \\
4    & 0.5613     & 0.5740 \\
6    & 0.6095     & 0.6133 \\
8    & 0.6357     & 0.6369 \\
10   & 0.6529     & 0.6537 \\
20   & 0.6967     & 0.6969 \\
40   & 0.7315     & 0.7320 \\
60   & 0.7488     & 0.7492  \\
80   & 0.7600     & 0.7603 \\
100  & 0.7681     & 0.7685 \\
\hline
\end{tabular}
\caption{Optimal ring radius for $N$ small targets
  equidistantly-spaced on a concentric ring obtained from minimizing
  either the discrete sum $P_{N}(r_0)$ in \eqref{ball:pfinal1} or its
  large-$N$ approximation ${\mathcal F}_{N}(r_0)$ in
  \eqref{ball:fn}. Remarkably, we observe that the large-$N$ approximation
  \eqref{ball:fn} agrees with the discrete optimum in \eqref{ball:pfinal1}
  to within $5 \times 10^{-4}$ for $N \ge 10$ (and to within $0.013$
  already when $N = 4$).}\label{tab:ropt_ball}
\end{table}

We remark that ${\mathcal F}_{N}(r_0)$ is identical to
\eqref{prop:opt_ring_mfpt} for the filament problem
if $a_N$ is replaced by $a_{\eps}r_0$.
As such, we can repeat an almost identical analysis as done for an 
equatorial ring filament to establish that there is an optimal 
ring radius for $N\gg 1$ equidistantly spaced small targets that is 
close to the boundary of the sphere. In the third column of Table
\ref{tab:ropt_ball} we show that the minimizer of ${\mathcal F}_{N}(r_0)$
provides a close approximation to that obtained for the
discrete sum $P_{N}(r_0)$ even when $N$ is only moderately large.

For $N\gg 1$, we can derive an analytical scaling law for the optimal
radius by repeating a similar analysis as for the thin filament. We
find for large $N$ that
\begin{equation}\label{ball:optim}
  r_{opt} = 1 - \frac{1}{\log{a_{N}}} -
    \frac{\left[\log(\log a_N) + \log{4}-{1/2}\right]}{(\log{a_{N}})^2}
    \,.
\end{equation}
In particular if we set $N={\mathcal O}(\sigma^{-1})$, we observe that
the optimal ring radius is ${\mathcal O}({-1/\log\sigma})$ close to the
boundary; a result that is qualitatively the same as that derived
in \eqref{prop:opt_r0} for the thin filament. However, since $\log(a_{N})$
is not very large unless $N$ is very large, \eqref{ball:optim} does not
compare very favorably at moderate values of $N$ with the results shown
in Table \ref{tab:ropt_ball}.

Finally, we compare our MFRT result for $N\gg 1$ small perfectly
absorbing targets, each of diameter ${\mathcal O}(\sigma)$, that are
uniformly spaced on a ring with that obtained earlier for an
equatorial filament of circular cross-section of radius $\eps$
with a perfectly absorbing boundary.  For this filament, we set
$d_c=1$, $L=2\pi r_0$, and $(3\pi D r_0)^{-1}= {|\Omega|/(2\pi D L)}$
in
\eqref{sec3:ubar} to obtain
\begin{equation}\label{sec3:fin_film}
\chi_{\text{filam}} = \frac{|\Om|}{2\pi D L} \left( \underbrace{- \log\eps + \log\left(
\frac{4L}{\pi}\right)} + u_{b} \right), \quad
u_{b}\equiv 2r_0 K(r_0^2) + \pi r_0 (r_0^2+\log{2} - {14/5} ) -\frac{r_0}{2}
J(r_0) \,.
\end{equation}
The corresponding result for $N\gg 1$ small targets on the ring,
as obtained from \eqref{ball:ucont}, can be written as
\begin{equation}\label{sec3:fin_ball}
  \chi_{\text{ball}} = \frac{|\Om|}{2\pi D L} \left(
  \underbrace{\frac{L}{2\sigma N C} +
  \log\left(\frac{2N}{\pi}\right) + \gamma_e} + u_{b} \right) .
\end{equation}
Upon equating the underlined terms in \eqref{sec3:fin_film} and
\eqref{sec3:fin_ball} we observe that $N\gg 1$ identical small targets
of diameter ${\mathcal O}(\sigma)$ will yield asymptotically the same GMFRT
as that for a filament of cross-sectional radius $\eps$, when
\begin{equation}\label{sec3:eps_to_sigma}
  \eps = \frac{2L}{N} \exp\left( - \frac{L}{2\sigma N C} - \gamma_e
  \right) \,.
\end{equation}

In \S \ref{sec6:ball-ring} we perform a more detailed and general
analysis of the fragmentation of a filament of length $L$ into $N$
localized uniformly spaced small targets. We will show that the
relation \eqref{sec3:eps_to_sigma} can be derived from the analysis of
a new inner problem in the {\em transition} scaling regime ${\mathcal
O}(\sigma)\ll {L/N} \ll {\mathcal O}(1)$ that exists between
well-separated and closely-spaced targets.

\section{Trapezoidal Nystr\"{o}m discretization of the boundary integral equation}\label{sec4:numerics}

In this section we describe a boundary integral method for the
solution of the system \eqref{mprop:inteq} based on the trapezoidal
rule. The first step is to decompose the system into
filament self-interacting terms and filament coupling terms. For a
density $A(s)$ on a filament $\Gamma_j$, with the
self-filament integral operator $\K_j$ by
\begin{equation}\label{eqn:filament_self}
    (\K_j A)(s)\equiv\frac{A(s)}{\nu_j}+\frac12A(s)\,\mathcal{E}_j(s)
  -\frac12\int_{\Gamma_j}\frac{A(s)-A(s^{\prime})}{|\mathbf{X}_j(s)-
    \mathbf{X}_j(s^{\prime})|}\,\dd s^{\prime}
  +2\pi\int_{\Gamma_j}A(s^{\prime})\,R\big(\mathbf{X}_j(s);
  \mathbf{X}_j(s^{\prime})\big)\,\dd s^{\prime}\,,
\end{equation}
for $s\in [0,L_j]$ and $L_j$ is the length of the $j^{th}$
filament. For a line density $A(s)$,  the coupling terms
between filaments $\Gamma_i$ and $\Gamma_j$ for $i\neq j$ is
\begin{equation}\label{eqn:filament_coupled}
  \big(\mathcal{G}_{ji}A\big)(s)\equiv2\pi\int_{\Gamma_i}A(s^{\prime})\,
  G\big(\mathbf{X}_j(s);\mathbf{X}_i(s^{\prime})\big)\dd s^{\prime} .
\end{equation}

We now describe a trapezoidal rule for the discretization of the
integral operators
\eqref{eqn:filament_self}-\eqref{eqn:filament_coupled} and the numerical
solution of \eqref{mprop:inteq}. Our method is adapted
to the case of simple closed curves. We subdivide the $j^{th}$ filament
$\Gamma_j$ into $M_j$ equally spaced nodes in terms of arc-length,
by writing
\begin{equation}
    h_j = \frac{L_j}{M_j}\,, \quad s_{j,p} = p\, h_j\,, \quad \X_{j,p} =
    \X_j(s_{j,p})\,, \qquad p = 1,\ldots, M_j \,.
\end{equation}
In this setting, the trapezoid rule applied to a periodic function
$f(s)$ on the filament $\Gamma_j$ is 
\begin{equation}\label{eqn:trap_gam}
\int_{\Gamma_j} f(s)\dd s = h_j\sum_{p = 1}^{M_j} f(s_{j,p})\,.
\end{equation}
The unknowns are $A_{j,p} = A_j(s_{j,p})$ and
$\chi$.  Along each point on the curve, we employ the following
notation
\begin{equation}
r^{ji}_{pq}=|\X_{j,p}-\X_{i,q}|\,,\qquad
R^{ji}_{pq}=R(\X_{j,p};\X_{i,q})\,,\qquad
G^{ji}_{pq}=G(\X_{j,p};\X_{i,q})\,.
\end{equation}

We now focus on the discretization of the operator $\K_j$ for $j =
1,\ldots,N$ as defined in \eqref{eqn:filament_self}. We first recall
the reduced form of the self-energy term $\mathcal{E}_j(s)$ as derived
in \eqref{eq:E} of Appendix \ref{appa:num_intrinsic}
\begin{equation}
  \mathcal E_j(s)=2\log\left(\frac{4L_j}{\pi}\right) +\int_{\Gamma_j}
  \left[\frac{1}{|\X_j(s)-\X_j(s^{\prime})|}-
    \frac{\pi/L_j}{\big|\sin\big(\pi(s-s^{\prime})/L_j\big)\big|}\right]
  \dd s^{\prime}\,.
\end{equation}
Let ${ d_{pq}^j} = (L_j/\pi)\,|\sin(\pi(s_{j,p}-s_{j,q})/L_j)|$ be the
chord between nodes $p$ and $q$ {on $\Gamma_j$.}  Then applying
the trapezoidal rule to the first two terms of
\eqref{eqn:filament_self} yields
\begin{align}
  \frac12A_{j,p}\mathcal{E}_{j,p}-\frac{h_j}{2}\sum_{\substack{q=1\\ q\neq p}}^{M_j}
  \frac{A_{j,p}-A_{j,q}}{r^{jj}_{pq}}
&=\frac12A_{j,p}\left[2\log\frac{4L_j}{\pi}+h_j\sum_{\substack{q=1\\ q\neq p}}^{M_j}
    \left(\frac{1}{r^{jj}_{pq}}-\frac{1}{d_{pq}^j}\right)\right]-
  \frac{h_j}{2}\sum_{\substack{q=1\\ q\neq p}}^{M_j}
  \frac{A_{j,p}-A_{j,q}}{r^{jj}_{pq}}\notag\\
  &=\beta_{j}\,A_{j,p}+\frac{h_j}{2}\sum_{\substack{q=1\\ q\neq p}}^{M_j}
  \frac{A_{j,q}}{r^{jj}_{pq}} \,,\label{trap:self}
\end{align}
where after canceling the terms $A_{j,p}/r^{jj}_{pq}$, the remaining
diagonal term is of form
\begin{equation}\label{trap:beta}
  \beta_{j}\equiv\log\left(\frac{4L_j}{\pi}\right) - \frac{h_j}{2}
  \sum_{\substack{q=1\\ q\neq p}}^{M_j} \frac{1}{d_{pq}^j} \,,
\end{equation}
and can be evaluated once per filament. The remaining smooth terms
featuring the regular part $R$ in $\K_j$ and the coupling terms
$\mathcal{G}_{ji}$ are smooth and treated by a simple application of
\eqref{eqn:trap_gam}. The discretization of \eqref{mprop:inteq} for
each $j=1,\dots,N$ and $p=1,\dots,M_j$ is then,
\begin{equation}\label{trap:disc}
\Big(\frac{1}{\nu_j}+\beta_{j}\Big)A_{j,p}
+\frac{h_j}{2}\sum_{\substack{q=1\\ q\neq p}}^{M_j}\frac{A_{j,q}}{r^{jj}_{pq}}
+2\pi h_j\sum_{q=1}^{M_j}R^{jj}_{pq}A_{j,q}
+2\pi\sum_{\substack{i=1\\ i\neq j}}^{N}h_i\sum_{q=1}^{M_i}G^{ji}_{pq}A_{i,q}=\chi \,,
\end{equation}
together with the discrete form of the constraint,
\begin{equation}\label{trap:constraint}
\sum_{j=1}^{N}h_j\sum_{p=1}^{M_j}A_{j,p}=\frac{|\Omega|}{2\pi D}\,.
\end{equation}
This system of linear equations can be compactly represented in block
form as,
\begin{equation}\label{trap:system}
\begin{pmatrix}
\mathsf{M}_{11}&\cdots&\mathsf{M}_{1N}&-\mathbf{e}\\
\vdots&\ddots&\vdots&\phantom{-}\vdots\\
\mathsf{M}_{N1}&\cdots&\mathsf{M}_{NN}&-\mathbf{e}\\
h_1\mathbf{e}^{T}&\cdots&h_N\mathbf{e}^{T}&\phantom{-}0
\end{pmatrix}
\begin{pmatrix}\mathbf{A}_1\\ \vdots\\ \mathbf{A}_N\\ \chi\end{pmatrix}
=
\begin{pmatrix}\mathbf{0}\\ \vdots\\ \mathbf{0}\\ |\Omega|/(2\pi D)
\end{pmatrix}\,,
\end{equation}
where $\mathbf{A}_j=(A_{j,1},\dots,A_{j,M_j})^{T}$ and $\mathbf{e} =
(1,\ldots,1)^T$.  
 Each block matrix $\mathsf{M}_{ji}$ (of size $M_j \times M_i$)
is given elementwise as
\begin{equation}\label{trap:blocks}
\big(\mathsf{M}_{ji}\big)_{pq} = \left\{\begin{array}{cl}
\Big(\frac{1}{\nu_j}+\beta_{j}\Big)\delta_{pq}+
h_j\left[\frac{1-\delta_{pq}}{2r^{jj}_{pq}}+2\pi R^{jj}_{pq}\right],& i = j,\\
h_i\left[\frac{1}{2r^{ji}_{pq}}+2\pi R^{ji}_{pq}\right],& i\neq j \end{array}\right.
\qquad { \left( \genfrac{}{}{0pt}{}{p = 1,\ldots,M_j}{q = 1,\ldots,M_i}\right).}
\end{equation}
This system of dense equations is solved directly for the discretized
line strengths $\{\mathbf{A}_j\}_{j=1}^N$ and constant $\chi$. The
outer solution \eqref{mprop:uout} away from the filaments can then be
evaluated with the same weights:
\begin{equation}\label{trap:outer}
  u(\mathbf{x})\approx\chi-2\pi\sum_{j=1}^{N}h_j\sum_{q=1}^{M_i}A_{j,q}
  \,G(\mathbf{x};\mathbf{X}_{j,q})\,,\qquad
  \operatorname{dist}(\mathbf{x},\Gamma_j)\gg {\mathcal O}(\varepsilon)\, \quad j = 1,\ldots, N.
\end{equation}
The equivalent discretization scheme for the splitting probabilities $\phi_k(\x)$ described in
Proposition \ref{prop:splitting} is
\begin{equation}\label{trap:system_split}
\begin{pmatrix}
\mathsf{M}_{11}&\cdots&\mathsf{M}_{1N}&-\mathbf{e}\\
\vdots&\ddots&\vdots&\phantom{-}\vdots\\
\mathsf{M}_{N1}&\cdots&\mathsf{M}_{NN}&-\mathbf{e}\\
h_1\mathbf{e}^{T}&\cdots&h_N\mathbf{e}^{T}&\phantom{-}0
\end{pmatrix}
\begin{pmatrix}\mathbf{B}_1\\ \vdots\\ \mathbf{B}_N\\ \bar{\phi}_k\end{pmatrix}
=-
\begin{pmatrix}\mathbf{\delta}_{k1}\mathbf{e} \\ \vdots\\ \mathbf{\delta}_{kN}\mathbf{e}\\ 0
\end{pmatrix}\,,
\end{equation}
where $\mathbf{B}_j=(B_{j,1},\dots,B_{j,M_j})^{T}$ are the
corresponding splitting probability line densities and the constant
$\bar{\phi}_k$ is the spatially averaged splitting probability. After
solving the system \eqref{trap:system_split}, the splitting
probabilities are
\begin{equation}
  \phi_k(\mathbf{x})\approx\bar{\phi}_k-2\pi\sum_{j=1}^{N}h_j
  \sum_{q=1}^{M_i}B_{j,q}
  \,G(\mathbf{x};\mathbf{X}_{j,q})\,,\qquad
  \operatorname{dist}(\mathbf{x},\Gamma_j)\gg {\mathcal O}(\varepsilon)\, \quad j = 1,\ldots,N.
\end{equation}

\paragraph{Errors in the numerical method.}
The trapezoidal rule is spectrally accurate for smooth periodic
integrands \cite{DahlquistBjorck1974}. After the subtraction of the
chord kernel, the self-interaction integrands are continuous but have
a local behavior of the form $|s-s^{\prime}|$ at $s^{\prime}=s$, arising from
the curvature of $\Gamma_j$ and the variations in the line
densities, which results in a local discontinuity in the 
derivative (i.e. a corner). The corner produces
an $\mathcal{O}(h^2)$ quadrature error, resulting in a scheme that
converges as $M^{-2}$ for $M\to\infty$. All other integrals are smooth
and are computed with spectral accuracy. Higher-order accuracy can be
obtained by using Richardson extrapolation or locally corrected
trapezoidal rules
\cite{KapurRokhlin1997,Alpert1999}. For the purposes of validating the
asymptotic expansions, we find that this second-order method is
sufficient.

\paragraph{Non-arclength parameterizations. }
In the derivation, above we assumed that the filament centerline
is parameterized by arc-length $\X = \X(s)$. To treat
simple closed filament centerlines $\tilde{\X}(t)$ that are not
parameterized by arc-length, but instead on $0\leq t<2\pi$, our
approach is to numerically provide a map between $t$ and $s$ that can
be interpolated with high precision. In particular, by computing
$s=\int_{0}^{t} |\tilde{\X}^{\prime}(\tau)| \dd\tau$ and then
interpolating the map to obtain equal spacing in arclength, the
numerical discretization of the integral equations described above can
be used. The map is sufficiently resolved numerically so that we can 
neglect any interpolation errors in comparison to the errors associated 
with the trapezoidal rule.

\subsection{Examples: Ring filaments}\label{sec4:arc}

\begin{figure}[htbp]
    \centering
    \includegraphics[width=0.95\textwidth]{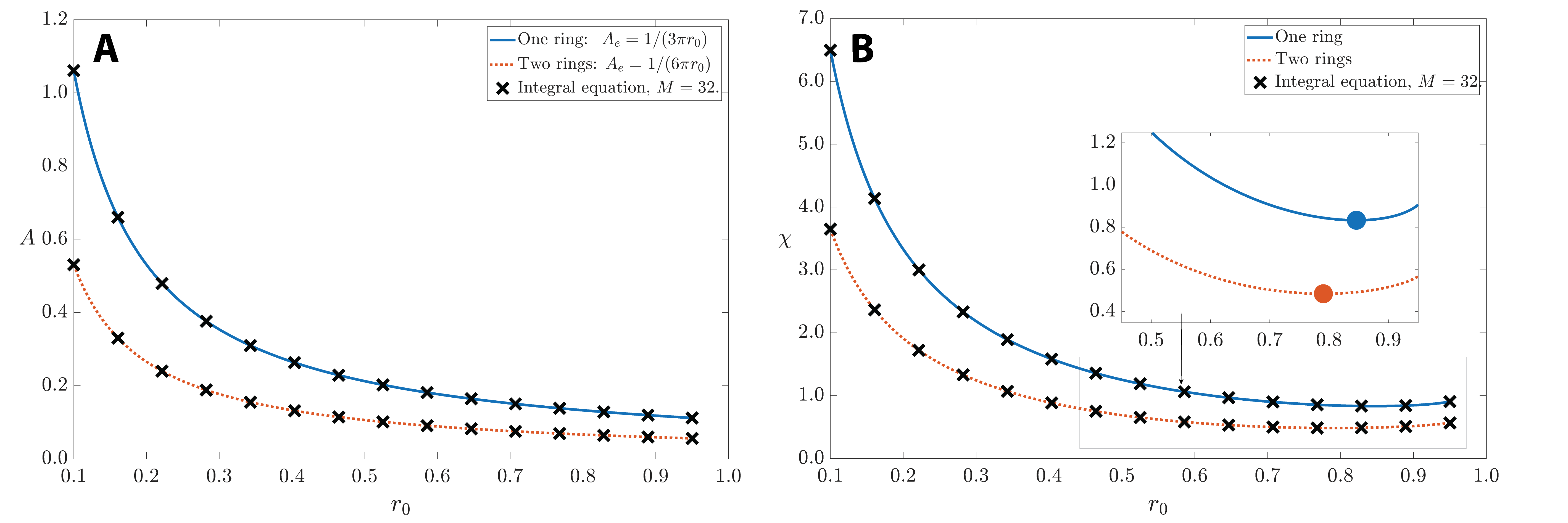}
\caption{
Convergence of the integral method in two examples: a single
concentric ring of radius $r_0$ centered at the origin, and two
concentric rings of radius $r_0$ on planes $z = \pm 0.1$, with $D=1$
and $\eps=10^{-3}$.  The numerical results by the integral equation
solver (crosses) compare favorably with the exact solutions (lines) 
for $A$ (Panel A) and $\chi$ (Panel B).
The inset shows the local minimum of the MFRT in $r_0$.}
\label{fig:ie_conv} 
\end{figure}

In this subsection we validate our numerical method for a few
configurations of ring or loop-type filaments, for which the intrinsic
self-energy of the filament is known analytically.

\begin{figure}[htbp]
    \centering
    \includegraphics[width = 0.8\textwidth]{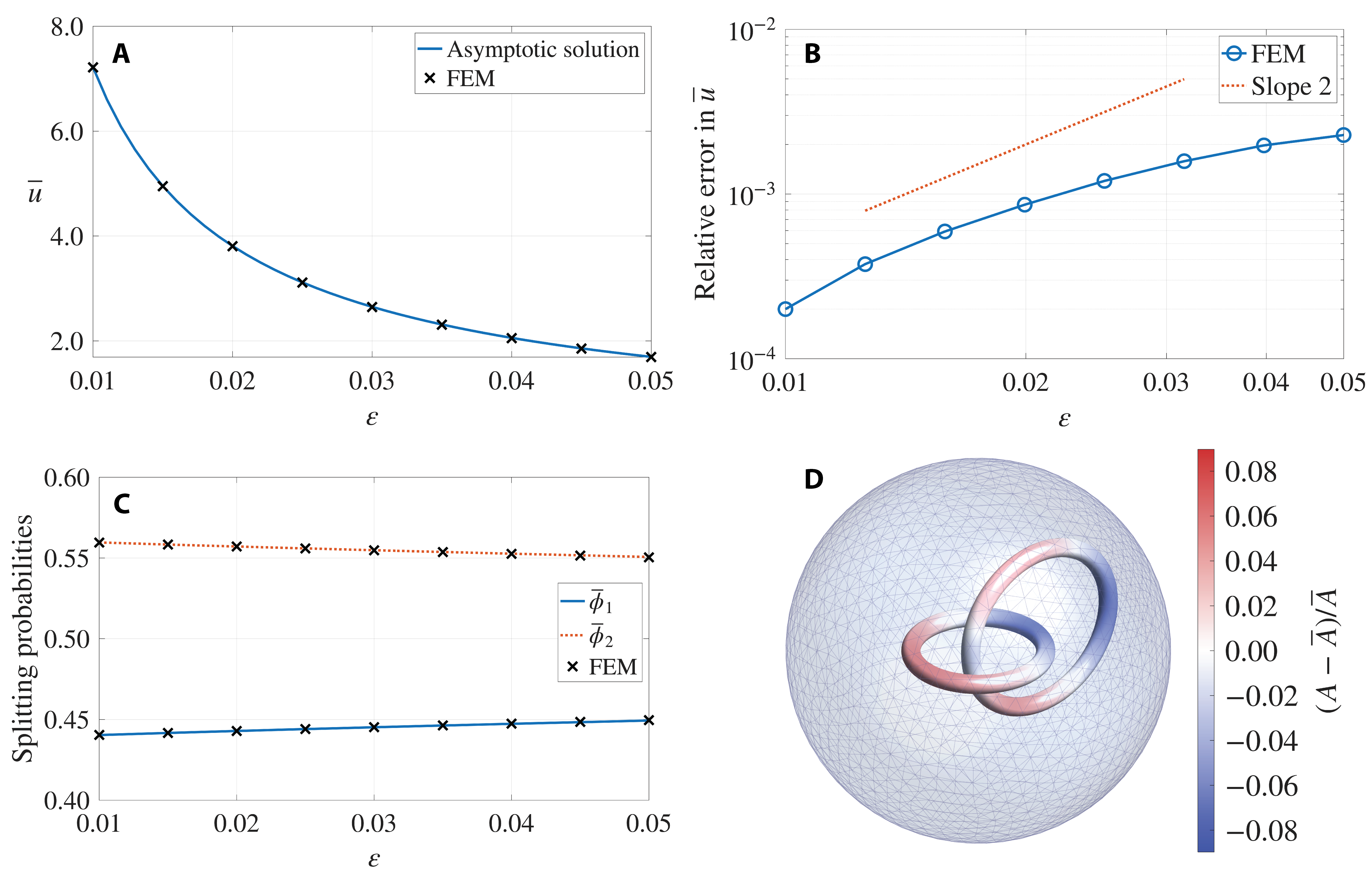}
\caption{
Validation of asymptotic reduction with full numerical
solution (FEM) for the two
filaments with common reactivity $b=2$ and $d_c(2)=e^{-\frac12}$. Panel A: Agreement between $\bar{u}$ from FEM solution and
the $\chi$ determined from the solution of the integral equation
\eqref{mprop:inteq}. Panel B: The relative error $|\bar{u}-\chi|/\bar{u}$ as $\eps\to0$ indicating the validity of the leading order approximation.  Panel C: Agreement of the splitting probabilities $\bar{\phi}_{1,2}$ with FEM and asymptotic method. Panel D: The spherical domain and filaments for $\eps=0.05$. The surfaces are shaded with relative fluxes 
$(A_i(s)-\bar{A}_i)/\bar{A}_i$ showing shielding near
the center of the loop. \label{fig:Interlacing_Filaments}}
\end{figure}

As an example, we first consider one perfectly absorbing ring filament
of radius $r_0$ of circular cross-section of radius $\eps$ in the
$z=0$ plane such as shown in Fig.~\ref{fig:sphere_tube}. The integral
system \eqref{prop:inteq} was solved explicitly for this case in
\S~\ref{sec2:conc_ring} and it was determined in \eqref{sec3:aring}
that the line density is simply $A=\equiv 1/(3\pi D r_0)$. Secondly, we
consider two identical perfectly absorbing rings with circular
cross-section centered at $(0,0,\pm z_0)$, in which symmetry implies
that $A_1(s) = A_2(s)$ for all $s$. From \eqref{sec5:cons}, this 
implies that $A = 1/(6\pi D r_0)$. We parameterize the two filament 
centerlines in terms of arclength as
$\X_{\pm}(s)=\left(r_0\cos\left({s/r_0}\right),r_0\sin\left({s/r_0}\right),\pm
z_0\right)^{T}$ with $0<s<2\pi r_0$. By scaling out $A_e$ in
\eqref{mprop:inteq}, we find that the volume-averaged MFRT is

\begin{equation}\label{mprop:tube_parallel}
\chi = A_e \chi_0 \,, \quad \mbox{where} \quad
 \chi_0 = \log\left(\frac{8r_0}{\eps}\right) +2\pi
 \int_{0}^{2\pi r_0}  R(\X_{+}(s);\X_{+}(s^{\prime})) \dd s^{\prime} +
 2 \pi  \int_{0}^{2\pi r_0}    G\big(\X_{+}(s);\X_{-}(s^{\prime})\big)\dd s^{\prime}
 \,,
\end{equation}

where the two integral terms are independent of $s$ by symmetry.  In
deriving \eqref{mprop:tube_parallel} we used $\nu={-1/\log \eps}$, and
${\mathcal E}(s)=2\log(8 r_0)$ for the common intrinsic self-energy of
the two filaments. In Figs.~\ref{fig:ie_conv}A-B, we show good
agreement between the values of $A$ and $\chi$ produced by the
integral equation solution and the closed form solutions over a range
of $r_0$ values for $\eps = 10^{-3}$ and both the single ring and the
two ring cases ($z_0 = 0.1$). In Fig.~\ref{fig:ie_conv}B, the inset
 compares the critical radii that minimize the global MFRTs for
these two examples.

As a second example, we consider two interlacing ring filaments such
as shown in Fig.~\ref{fig:Interlacing_Filaments}D whose centerlines
are given explicitly by

\begin{equation}\label{ex:two_filaments}
  \Gamma_1:\ \{\, z=0,\ x^2+y^2=0.35^2 \,\},\qquad
\Gamma_2:\ \{\, y=0,\ (x-0.4)^2+z^2=0.45^2 \,\} \,.
\end{equation}

For this configuration we solve for the MFRT and splitting
probabilities at various $\eps$ but assuming a uniform common
reactivity $b=2$, for which $d_c=e^{-1/2}$. The results are shown in
Fig.~\ref{fig:Interlacing_Filaments}. We observe in
Figs.~\ref{fig:Interlacing_Filaments}A and C that the asymptotic
solution of the MFRT $\bar{u}$ and the splitting probabilities
$\{\bar{\phi}_1,\bar{\phi}_2\}$ agree very closely with those computed
from a full finite element simulation based on quadratic elements
\cite{Geuzaine2009Gmsh}. The relative error in the approximation of
$\bar{u}$ is observed in Fig.~\ref{fig:Interlacing_Filaments}B to
exhibit an $\mathcal{O}(\eps^2)$ scaling as $\eps\to0$. In
Fig.~\ref{fig:Interlacing_Filaments}D we show the geometry together
with the filaments shaded by $(A_i(s)-\bar{A}_i)/\bar{A}_i$ for
$\bar{A}_i = |\Gamma_i|^{-1}\int_{\Gamma_i} A_i(s)\dd s$ when
$\eps=0.05$. A shielding effect is observed in which the flux to the
filaments is modulated by the competition of the neighboring geometry,
through interactions with the other filament and the exterior boundary
$\partial\Omega$.

\subsection{Examples: torus-knot filaments}

In this subsection, we consider torus-knot filaments that do not admit a
convenient arclength parameterization and for which we must
numerically compute their intrinsic self-energy.

A helical filament centerlines, parameterized by $t$, has the form
\bsub\label{eqn:filament_cases}
\begin{equation}
  \label{eqn:wiggly_curve_conv}   \X(t) =
  \Big( \big(r_0 + \rho_0\cos (p t)\big) \cos(qt),
  \big(r_0 + \rho_0\cos (p t)\big) \sin(qt),
  \rho_0 \sin (pt)\Big)\,, \qquad t\in[0,2\pi)\,.
\end{equation}

For positive integers $p$ and $q$, the curve \eqref{eqn:wiggly_curve_conv} is closed on
$0\le t<2\pi$. Provided $\rho_0<r_0$, it is a simple closed curve traversed exactly once
if and only if $\gcd(p,q)=1$. It then lies on a torus of major radius $r_0$ and minor
radius $\rho_0$, winding $q$ times around the $z$-axis and $p$ times around the centerline, and
therefore forms a $(q,p)$ torus knot. This is the \emph{unknot} when $p=1$ or $q=1$, and the
\emph{trefoil} when $\{p,q\}=\{2,3\}$.  The length of the centerline,
with Jacobian $\dd s=v(t)\dd t$ where $v(t)\equiv |\X^{\prime}(t)|$,
is

\begin{equation}
  L = \int_{0}^{2\pi} v(t) \dd t \,, \qquad \mbox{and} \qquad
  v(t) \equiv \Big[ \rho_0^2 p^2 + q^2\big( r_0 + \rho_0 \cos(p t)\big)^2
    \Big]^{\frac12} \,.
\end{equation}
\esub

\begin{figure}[htbp]
    \centering
    \includegraphics[width=0.95\textwidth]{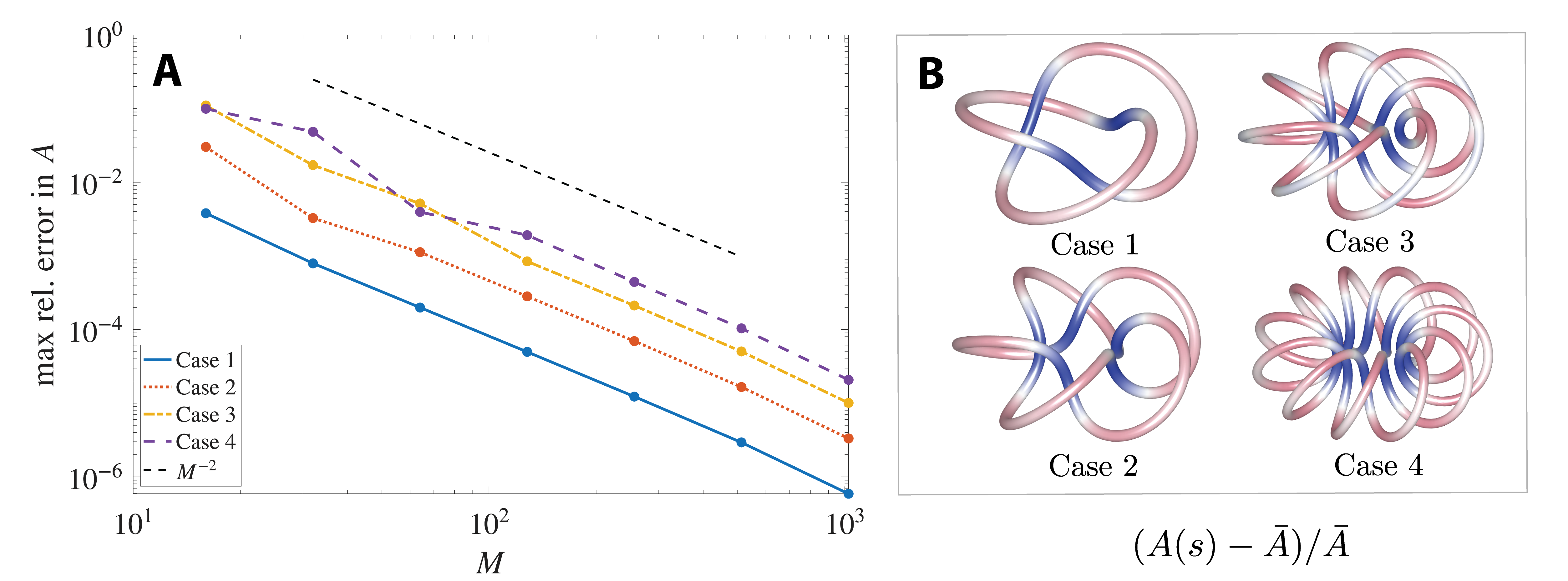}
\caption{
Convergence of the integral method. Panel A:  The maximum
relative error of $A$ computed by the integral method, as a function
of the number of discretization points $M$, for the four filaments
described in \eqref{eqn:filament_cases}, for $\eps=10^{-3}$.
The expected $M^{-2}$ convergence rate is shown by the dashed line, with
larger errors associated with more wavelengths in the
centerline. Panel B: plots of $(A(s)-\bar{A})/\bar{A}$ along
the filament surface where $\bar{A}=|\Gamma|^{-1}\int_{\Gamma}A(s)\dd
s$. Red and blue indicates regions of larger and lower relative
fluxes respectively. \label{fig:ie_conv_helix}}
\end{figure}

We consider four cases spanning a range of filament undulations parameterized on $0\leq t<2\pi$ by: 

\bsub\label{helix:ex1}
\begin{align}
  \label{eqn:filament_cases_a}    \mbox{Case 1}:& \, r_0 = 0.50 \,,
  \quad \rho_0 = 0.20\,, \quad p = 3\,,\phantom{0} \quad q = 2\,;\\
  \label{eqn:filament_cases_b}    \mbox{Case 2}:& \, r_0 = 0.50\,,
  \quad \rho_0 = 0.25\,, \quad p = 5\,,\phantom{0} \quad q = 2\,;\\
  \label{eqn:filament_cases_c}    \mbox{Case 3}:& \, r_0 = 0.55\,,
  \quad \rho_0 = 0.30\,, \quad p = 7\,, \phantom{0} \quad q = 3\,;\\
  \label{eqn:filament_cases_d}    \mbox{Case 4}:& \, r_0 = 0.55\,,
  \quad \rho_0 = 0.30\,, \quad   p = 11\,, \quad q = 2\,.
\end{align}
\esub
We choose the cross-sectional radius to be $\eps =10^{-3}$ and $d_c=
1$ consistent with a perfectly absorbing filament. By setting such a
small $\eps$, our goal for this example is to estimate the order of
the error in our numerical discretization of the integral equation
with respect to the number of sub-intervals. In
Fig.~\ref{fig:ie_conv_helix}A we demonstrate the expected
$\mathcal{O}(M^{-2})$ convergence rate of the relative errors in the
integral method. The surface of each filament is shown in
Fig.~\ref{fig:ie_conv_helix}B colored with $(A(s)-\bar{A})/\bar{A}$
for $\bar{A} = |\Gamma|^{-1}\int_{\Gamma}A(s)\dd s$ demonstrating
filament curvature effects on the density.

For our final example, we will consider two perfectly absorbing helical
filaments that have the same length, and we will determine whether the
specific type of filament centerline and its location in the unit
sphere has an appreciable affect on the global MFRT. We assume that
the filaments have a circular cross-sectional radius of either
$\eps=0.02$ or $\eps=0.05$. We will consider the following two cases:
\begin{figure}[htbp]
  \centering
     \includegraphics[width=0.9\textwidth]{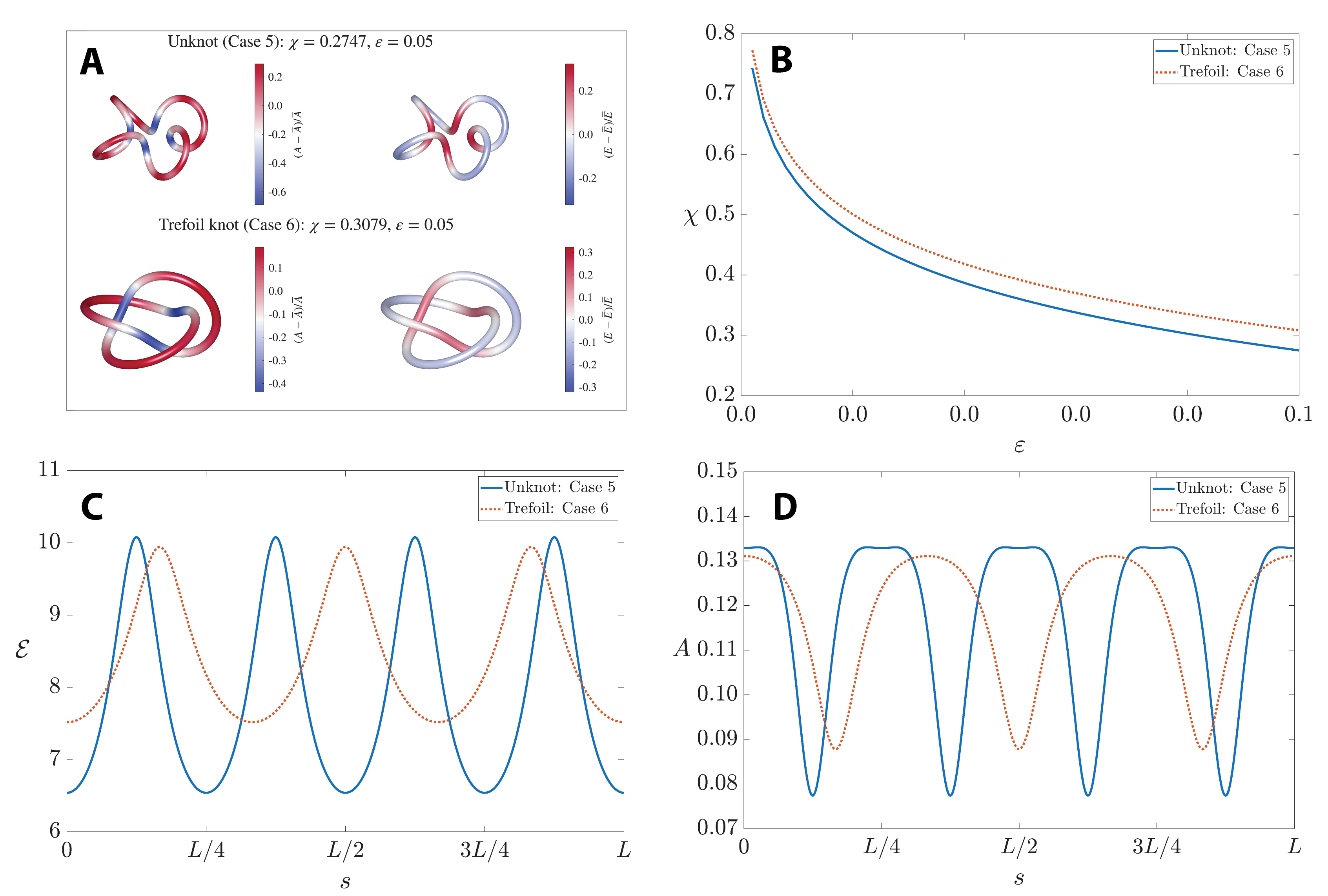}
\caption{
Solutions for the two helical filaments \eqref{eqn:wiggly_curve_conv}
and \eqref{helix:ex2} depicting the unknot \eqref{helix:ex2a} and the
trefoil knot \eqref{helix:ex2b}, respectively. The length of each
centerline is $L\approx 5.68$. Panel A: The filaments with their
relative intrinsic energies
$(\mathcal{E}(s)-\bar{\mathcal{E}})/\bar{\mathcal{E}}$ and relative
flux densities $(A(s)-\bar{A})/\bar{A}$ for $\eps
d_c=0.05$. A strong shielding effect is observed. Panel B: The MFRT
$\chi$ against $\eps$ for $d_c=1$ demonstrating that the unknot
generates a lower MFRT compared to Trefoil. Panel C: The intrinsic
energies $\mathcal{E}(s)$ against arclength for the two
filaments. Panel D: The flux densities $A(s)$ against arclength for
the two filaments when $\eps d_c=0.02$. \label{fig:helix_ex2}} 
\end{figure}

\bsub \label{helix:ex2}
  \begin{align}
  \label{helix:ex2a}  &  \mbox{Case 5}: \, r_0 = 0.40 \,,
  \quad \rho_0 = 0.2000\,, \quad p = 4\,, \quad q = 1 \,, \quad \mbox{(Unknot)}
  \,;\\
  \label{helix:ex2b}   & \mbox{Case 6}: \, r_0 = 0.40\,,
  \quad \rho_0 = 0.1366\,, \quad p = 3\,, \quad q = 2 \,, \quad
  \mbox{(Trefoil knot)} \,.
  \end{align}
\esub 

For both cases, the length of the filament centerline is $L\approx
5.68$, and so the surface area $2\pi \eps L$ of the capturing set is
the same for both helices. The filament centerlines are shown in
Fig.~\ref{fig:helix_ex2}.

By integrating \eqref{prop:inteq} over $\Gam$, we derive for each of
the two curves that

\begin{equation}\label{hel2:chi}
    \chi = \frac{|\Om|}{2\pi D \nu |\Gam|} + \frac{1}{2|\Gam|}
    \int_{\Gam} A(s) {\mathcal E}(s) \dd s + \frac{2\pi}{|\Gam|}
    \int_{\Gam}\int_{\Gam} A(s) R\left(\X(s);\X(s^{\prime})\right)
    \dd s \dd s^{\prime} \,.
\end{equation}

For each of the two examples in \eqref{helix:ex2} we have
$|\Gam|\approx 5.68$, and the average distance from the origin does
not differ substantially. As a result, we expect that the term
associated with the regular part $R$, which accounts for the filaments
interaction with the geometry, should be roughly similar for the two
curves. However, the two helices differ considerably in their
intrinsic energies owing to the different number of undulations. For
these two examples, we show in the results of Fig.~\ref{fig:helix_ex2}
that the unknot produces a lower MFRT. In Fig.~\ref{fig:helix_ex2} we
show the filaments themselves together with their intrinsic
self-energies and the flux densities for $\eps d_c = 0.02$ and $\eps
d_c = 0.05$.


\section{Extensions to related problems in diffusive capture}\label{sec5:extensions}

In this section we extend the asymptotic theory in two directions. In
\S \ref{sec5:varD} we consider the effect of a heterogeneous diffusivity
on the MFRT for a single filament, while in \S \ref{sec5:eig} we
calculate the principal eigenvalue for the Laplacian for a domain with
a single filament.

\subsection{Heterogeneous diffusivity}\label{sec5:varD}

We first extend \eqref{intro:mfrt} by allowing for a smooth heterogeneous
diffusivity $D(\x)>0$, so (with It\^{o} interpretation)

\begin{equation}\label{varD:mfrt}
\Delta_{\x} u=-\frac{1}{D(\x)} \,, \quad \x\in \Om\setminus\Gam_\eps\,; \qquad 
\eps \partial_n u + b u =0 \,, \quad \x \in \partial\Gam_\eps\,; \qquad
\partial_n u=0\, , \quad \x \in \partial\Om \,.
\end{equation}

The asymptotic analysis of \eqref{varD:mfrt} for the single filament
$\Gam_\eps$ is very similar to that for the case of uniform diffusivity.
More specifically, the construction of the solution in the inner
region is identical up to order ${\mathcal O}(\eps^2)$. As a result,
the outer solution that incorporates all logarithmic corrections has the
same singularity condition as $\x\to \X(s)$. In this way, we have
that the outer solution satisfies
\bsub \label{varD:out_1}
\begin{align}
    \Delta_{\x} u &= -\frac{1}{D(\x)} + 2\pi \int_{\Gam} A(s)
    \delta(\x-\X(s))  \dd s \,, \quad \x\in \Omega\setminus
    \Gam \,; \quad \partial_n u=0 \,, \quad \x\in
    \partial\Omega \,, \label{varD:out_1a} \\
    u &- A(s) \log|\x-\X(s)| \to \frac{A(s)}{\nu} \quad \mbox{as}
    \quad \x\to \X(s) \,, \quad s\in [0,L] \,, \label{varD:out_1b}
\end{align}
\esub where $\nu=-{1/\log[\eps d_c(b)]}$.
From an application of the divergence theorem to \eqref{sec2:out_1a} we
conclude that
\begin{equation}\label{varD:cons}
  \int_{\Gam} A(s) \dd s = \frac{1}{2\pi} \int_{\Om} \frac{1}{D(\x)} \dd \x\,.
\end{equation}

By adding and subtracting the term $|\Om|^{-1} \int_{\Om}
\left[D(\x)\right]^{-1} \dd \x$ to the right-hand side of
\eqref{varD:out_1a}, it is convenient to define $u_{p}(\x)$ to be the
unique solution to
\begin{equation}\label{varD:up}
  \Delta u_p =  -\frac{1}{D(\x)} + \frac{1}{|\Omega|} \int_{\Om}
  \frac{\dd \x^{\prime}}{ D(\x^{\prime}) } \,, \quad \x\in \Om \,; \qquad
  \partial_{n} u_{p} =0 \,, \quad \x\in \partial\Om \,; \qquad
  \int_{\Om} u_{p} \dd \x=0 \,,
\end{equation}
which is given in terms of the Neumann Green's function \eqref{intro:Gdef}
by
\begin{equation}\label{varD:up_solve}
  u_{p}(\x) = \int_{\Om} \frac{1}{D(\x^{\prime})} \GN(\x;\x^{\prime}) \,
  \dd \x^{\prime}\,.
\end{equation}
We observe that when $D(\x)$ is a constant in $\Om$, then $u_p\equiv
0$ since $\int_{\Om} \GN(\x;\x^{\prime}) \dd \x^{\prime}=0$. By
superposition, the solution to \eqref{varD:out_1} is determined in
terms of an unknown constant $\chi$ by
\begin{equation}\label{varD:urep}
  u(\x) = \chi + u_{p}(\x) - 2\pi \int_{\Gam} A(s)
  \GN(\x;\X(s))\dd s \,, \quad \mbox{for} \quad \x
  \notin \Gam \,,
\end{equation}
from which it follows that $\chi=|\Om|^{-1}\int_{\Om} u(\x) \dd \x$. As a
result, we conclude from \eqref{intro:mfrt_ave} and \eqref{varD:urep}
that the global MFRT is
\begin{equation}\label{varD:chi_ave}
   \bar{u} = \chi + o(1) \,.
\end{equation}

The remainder of the derivation of the integral equation for $A(s)$
parallels that for the uniform diffusivity case done in
\S \ref{sec2:outer}. The result is summarized as follows:

\begin{proposition}\label{propvd:int_eq} 
Let $\Gam$ be any smooth simple closed curve in $\Omega$ parameterized
by arclength as $\X(s)$, which has an ${\mathcal O}(\eps)$
cross-sectional diameter. In the limit $\eps\to 0$, the asymptotic
solution to \eqref{varD:mfrt} in the outer region at ${\mathcal O}(1)$
distances from $\Gam$ is
\begin{equation}\label{propvd:uout}
  u(\x) \sim\chi + u_p(\x) - 2\pi \int_{\Gam} A(s) \GN(\x;\X(s))\dd
  s \,, \quad \mbox{for} \quad \x \notin \Gam \,,
\end{equation}
where $u_{p}(\x)$ is the solution \eqref{varD:up_solve} to
\eqref{varD:up}, and where $A(s)$ and $\chi$ are solutions to the
integral equation with constraint, defined for all $s\in [0,L]$ where $L=|\Gam|$, by
\begin{equation}\label{propvd:inteq}
    (\mathcal{K}A)(s) - u_{p}\left(\X(s)\right) =\chi \,, \qquad \int_{\Gam} A(s) \dd s =
\frac{1}{2\pi}\int_{\Om} \frac{\dd \x}{D(\x)}.
\end{equation}
Here $\mathcal{K}$ is defined in \eqref{intro:ops}, $\GN$ is
the Neumann Green's function with regular part $R$ satisfying
\eqref{intro:Gdef}.
\end{proposition}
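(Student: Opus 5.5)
The plan is to repeat, line by line, the matched-asymptotic argument of \S\ref{sec2:outer} that produced Proposition~\ref{prop:int_eq}, with the outer representation \eqref{varD:urep} in place of \eqref{sec2:urep}. The inner analysis needs no change: near $\Gam_\eps$ the forcing $-1/D(\x)$ is ${\mathcal O}(1)$, whereas the 2-D Laplacian in \eqref{sec1:lapl2} is ${\mathcal O}(\eps^{-2})$. Hence the leading inner solution is still $v_0=A(s)v_c$, and the matching condition \eqref{varD:out_1b}, with the same $\nu=-1/\log[\eps d_c(b)]$, carries over verbatim. The constraint in \eqref{propvd:inteq} is \eqref{varD:cons}, obtained by integrating \eqref{varD:out_1a} over $\Om$ and using $\partial_n u=0$.

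Next I would justify \eqref{varD:urep}. Since \eqref{varD:up} has zero-mean forcing, it is solvable, and Green's second identity with $\GN$ gives \eqref{varD:up_solve}. Then $w\equiv u-u_p$ satisfies $\Delta w=-|\Om|^{-1}\int_\Om D^{-1}\dd\x+2\pi\int_\Gam A\,\delta(\x-\X(s))\dd s$ with a Neumann condition. This has exactly the structure of \eqref{sec2:out_1a}, with $1/D$ replaced by the constant $|\Om|^{-1}\int_\Om D^{-1}\dd\x$, and the solvability condition is again \eqref{varD:cons}. The identical Green's-identity computation yields $w=\chi-2\pi\int_\Gam A\,\GN\dd s$. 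Because $\int_\Om u_p\dd\x=0$ and $\int_\Om \GN\dd\x=0$, we have $\chi=|\Om|^{-1}\int_\Om u\dd\x$. Since $|\Gam_\eps|={\mathcal O}(\eps^2)$, this gives \eqref{varD:chi_ave}.

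For the singularity matching, I would fix $\X(s)$ and let $\x=\X_\eta(s)\to\X(s)$ in the normal plane. The function $u_p$ is smooth near $\Gam$: it solves a Poisson problem with smooth right-hand side (assuming $D$ smooth and positive), so $u_p(\X_\eta(s))\to u_p(\X(s))$. The Green's-function integral is treated exactly as in \eqref{sec2:uint_1}--\eqref{sec2:unit_3}. One splits off $1/(4\pi|\x-\X(s')|)$, writes $A(s')=(A(s')-A(s))+A(s)$, and uses \eqref{sec2:csing} and the definition \eqref{sec2:intrinsic} of ${\mathcal E}(s)$. This gives $u\sim A(s)\log\eta+\chi+u_p(\X(s))-\frac12A{\mathcal E}+\frac12\int_\Gam\frac{A(s)-A(s')}{|\X(s)-\X(s')|}\dd s'-2\pi\int_\Gam A(s')R\dd s'$. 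Equating the ${\mathcal O}(1)$ part with $A(s)/\nu$ from \eqref{varD:out_1b} and rearranging gives $(\mathcal{K}A)(s)-u_p(\X(s))=\chi$, which is \eqref{propvd:inteq}.

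The only genuinely new point, and the one I expect to require care, is the claim that the inner problem is unaffected to the order needed. One has to check that the heterogeneity enters only at ${\mathcal O}(\eps^2)$ in the inner region, so that it contributes no logarithmic terms. One also has to check that $u_p$ has no singular behavior on $\Gam$, so that it can be evaluated pointwise. Both follow from the smoothness of $D$ and elliptic regularity for \eqref{varD:up}. Everything else is a direct transcription of the constant-$D$ derivation.
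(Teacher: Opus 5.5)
Your proposal is correct and follows the paper's derivation essentially step for step: the inner problem is unchanged, the constraint \eqref{varD:cons} comes from the divergence theorem, the representation $u=\chi+u_p-2\pi\int_\Gam A\,\GN\dd s$ uses $u_p$ from \eqref{varD:up}--\eqref{varD:up_solve}, and the singularity matching of \S\ref{sec2:outer} then goes through with the smooth term $u_p$ contributing only $u_p(\X(s))$ to the regular part. Your explicit check via $w=u-u_p$ and your remark on the smoothness of $u_p$ near $\Gam$ spell out what the paper compresses into ``by superposition'' and ``parallels the uniform diffusivity case.''
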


{\bf Example:} To illustrate the theory, we consider the unit sphere
with an equatorial ring of radius $r_0$ centered at the origin. We
will explore the effect on the global MFRT of a heterogeneous
diffusivity that is symmetric with respect to the plane $z=0$ but that
is larger at the poles than it is on the equator. However, we will fix
the harmonic mean of $D(\x)$ to unity, by the constraint $\int_{\Om}
\left[ D(\x)\right]^{-1} \dd\x=|\Omega|$ so that we can contrast our
results for the MFRT with that for the case of a uniform diffusivity
$D=1$. For a heterogeneous diffusivity with smaller values near the
equatorial filament, one would expect that the average MFRT is smaller
(i.e.~the expected time for capture is faster), than with a spatially
uniform diffusivity.

To explore this idea, we consider the choice ${1/D(\x)}=\beta_1 -\beta_2
z^2$ where $\beta_1>\beta_2>0$ in the unit sphere. Upon setting
$\int_{\Om} \left[D(\x)\right]^{-1}  \dd \x={4\pi/3}$, we obtain that
$5\beta_1-\beta_2=5$. With this choice, $u_{p}(\x)$ in \eqref{varD:up}
satisfies
\begin{equation}\label{varde:u0p}
  \Delta u_{p} = \beta_2 z^2-(\beta_1-1)=\beta_2 \left(z^2-{1/5}\right) \,,
  \quad \x\in \Om\,; \qquad
  \partial_n u_{p}=0 \,, \quad \x\in\partial\Omega\,; \qquad
  \int_{\Om} u_{p} \, \dd \x =0 \,, 
\end{equation}
which has the explicit solution in terms of $r=|\x|$ and $z$ given by
\begin{equation}\label{varde:u0pe}
  u_{p}(r,z) = \beta_2 \left( \frac{9}{700} + \frac{r^2}{70} - \frac{r^4}{140}
  + \frac{1}{14} (r^2-2) z^2 \right) \,.
\end{equation}
Evaluating \eqref{varde:u0pe} on the ring $r=r_0$ on the plane $z=0$
we get
\begin{equation}\label{varde:u0pe_ring}
  u_{p}(r_0) = \frac{\beta_2}{70}\left( \frac{7}{5} -\frac{1}{2}
  \left(1-r_0^2\right)^2\right)\,.
\end{equation}
We observe that $u_{p}(r_0)>0$ on $0\leq r_0\leq 1$ and that $u_{p}$
is monotone increasing in $r_0$.

Finally, upon substituting \eqref{varde:u0pe_ring} into
\eqref{propvd:inteq} we conclude that the global MFRT $\chi$ with this
choice of heterogeneous diffusivity is related to that with uniform
diffusivity $D=1$, labeled by $\chi_{\textrm{unif}}$, by a simple
shift $\chi = \chi_{\textrm{unif}} - u_{0p}(r_0)$, so that
\begin{equation}\label{vard;shift}
  \chi = \chi_{\textrm{unif}}- \frac{\beta_2}{70}\left( \frac{7}{5} -
  \frac{1}{2}\left(1-r_0^2\right)^2\right)\,,
\end{equation}
where $\chi_{\textrm{unif}}$ is given in \eqref{sec3:ubar} when
$D=1$. Here $\beta_2>0$ is any solution to $5\beta_1-\beta_2=5$ with
$\beta_1>\beta_2>0$. We conclude, as expected, that the global MFRT
$\chi$ decreases when the Brownian particle experiences a smaller
diffusivity near the equatorial plane where the capturing filament is
located.

In particular, if we set $\beta_2=1$ then $\beta_1={6/5}$, and the
heterogeneous diffusivity is $D(\x)=\left(\frac{6}{5}-z^2
\right)^{-1}$, which is $6$ times higher at the poles $z=\pm 1$ than
on the equatorial plane $z=0$. Setting $\eps d_c=0.02$ and evaluating
\eqref{vard;shift} at the previously optimal ring radius $r_{0}\approx
0.7680$ for the uniform diffusivity $D=1$ where
$\chi_{\textrm{unif}}\approx 0.4406$ (see second row of Table
\ref{tab:mfpt_optim}), we obtain that $\chi\approx 0.4218$, which is
approximately a $4.3\%$ reduction in the global MFRT due to this
specific heterogeneous diffusivity.

\subsection{The principal eigenvalue of the Laplacian}\label{sec5:eig}

In this subsection we consider the Laplacian eigenvalue problem in a
bounded 3-D domain with one partially reactive filament boundary
$\Gam_\eps$, formulated, apart from a normalization of the eigenfunction,
as 
\begin{equation}\label{eig:main}
\Delta u + \lambda u =0 \,, \quad \x\in
\Om\setminus\Gam_\eps\,; \qquad\eps \partial_n u + b u =0 \,, \quad \x
\in \partial\Gam_\eps\,; \qquad \partial_n u=0 \,, \quad \x\in \partial\Om\,.
\end{equation}

We seek to determine an approximation of the lowest eigenvalue of
\eqref{eig:main} that is accurate to all orders in
$\nu\equiv  {-1/\log[\eps d_c]}$; i.e. that effectively ``sums'' all
logarithmic terms in $\eps$. Motivated by the related analyses
in \cite{WHK} and \cite{KTW} for Laplacian eigenvalues in
2-D domains with spatially localized traps, we will expand $\lambda$
and the corresponding eigenfunction $u$ in the outer region as
\begin{equation}\label{eig:expan}
  \lambda = \lambda_{0}(\nu) + {\mathcal O}(\zeta)\,; \qquad
  u(\x) = u_{0}(\x;\nu) + {\mathcal O}(\zeta) \, \quad \mbox{for}
  \quad \dist(\x,\Gam) \gg {\mathcal O}(\eps) \,,
\end{equation}
where the correction $\zeta$ is transcendentally small with respect to
$\nu$ in the sense that $\zeta\ll \nu^k$ for any $k>0$.

The inner expansion near the filament centerline is identical to that
for the MFRT problem, and so in place of \eqref{sec2:out_1} we have in the
outer region that
\bsub \label{eig:out_1}
\begin{align}
    \Delta u_0 + \lambda_0 u_0 &=  2\pi \int_{\Gam} A(s)
    \delta(\x-\X(s))  \dd s \,, \quad \x\in \Omega\setminus
    \Gam \,; \qquad \partial_n u_0=0 \,, \quad \x\in
    \partial\Omega \,, \label{eig:out_1a} \\
    u_0 &- A(s) \log|\x-\X(s)| \to \frac{A(s)}{\nu} \quad \mbox{as}
    \quad \x\to \X(s) \,, \quad s\in [0,L) \,, \label{eig:out_1b}
\end{align}
\esub
for $L=|\Gam|$ and some line density $A(s)$. The solution to
\eqref{eig:out_1} is represented as
\begin{equation}\label{eig:u0_solve}
  u_{0}(\x;\lambda_0) = -2\pi \int_{\Gam} A(s)
  G_{h}(\x;\X(s);\lambda_0) \dd s\,,
\end{equation}
where for convenience below it is useful to enforce the normalization
condition $\int_{\Gam} A(s) \dd s=1$ for the eigenfunction. In
\eqref{eig:u0_solve}, $G_h(\x;\bxi;\lambda_0)$ is the Helmholtz
Green's function satisfying \bsub \label{eig:Gdefall}
\begin{equation} 
  \Delta G_{h} + \lambda_0 G_h = -\delta(\x-\bxi)\,, \quad \x\in \Omega
  \,; \qquad \partial_n G_h =0\,,\quad \x\in \partial\Omega \,,
 \label{eig:Gdef}
\end{equation}
which can be decomposed into a singular part and a smooth
regular part $R_h(\x;\bxi;\lambda_0)$ as
\begin{equation}  \label{eig:Gdef2}
  G_h(\x;\bxi;\lambda_0) =\frac{1}{4\pi|\x-\bxi|}+R_h(\x;\bxi;\lambda_0)\,.
\end{equation}
\esub
When $\Om$ is the unit sphere, an explicit infinite series representation for
$R_h$ is given in \eqref{appc:rh} of Appendix \ref{appc:eig}.

The derivation of the integral equation that determines $A(s)$ and
$\lambda_0$ is the same as in \S \ref{sec2:outer}, and yields
\begin{equation}\label{eig:inteq}
    \frac{A(s)}{\nu} + \frac{1}{2} A(s) {\mathcal E}(s) - \frac{1}{2}
    \int_{\Gam} \frac{ A(s)-A(s^{\prime})}{|\X(s)-\X(s^{\prime})|} \dd
    s^{\prime} +2\pi \int_{\Gam} A(s^{\prime}) R_h(\X(s);
    \X(s^{\prime});\lambda_0) \dd s^{\prime}=0 \,; \quad
    \int_{\Gam} A(s) \dd s=1 \,,
\end{equation}
where $\nu=-{1/\log[\eps d_c(b)]}$. To approximate the lowest
eigenvalue of $\lambda_0$ of \eqref{eig:main} that includes all
logarithmic corrections, we must determine the smallest positive value
$\lambda_0=\lambda_{0}^{\star}(\nu)$ for which the integral equation
\eqref{eig:inteq} has a line density $A(s)$ that does not vanish
identically on $\Gam$. A numerical discretization of \eqref{eig:inteq}
would lead to a nonlinear matrix eigenvalue problem to approximate
$\lambda_{0}^{\star}$. We shall refer to $\lambda_{0}^{\star}(\nu)$ as
the {\em log-summed} approximation to the lowest eigenvalue of
\eqref{eig:main}. 

\subsubsection{A two-term expansion}\label{eig:eig2t}

We first derive a two-term-in-$\nu$ expansion for the lowest root
$\lambda_0$ that is applicable to arbitrary $\Omega$, and we will
relate it to the two-term expansion of the global MFRT derived in
\eqref{prop:2term_chi} of Proposition \ref{prop:2term}. To do so, we
first readily derive from a perturbation expansion of the PDE
\eqref{eig:Gdef} (see \cite{KTW}) that
\begin{equation}\label{eig:rsmall}
  R_{h}(\x;\bxi;\lambda_0) = -\frac{1}{\lambda_0 |\Om|} + R(\x;\bxi)
    + {\mathcal O}(\lambda_0)\,, \quad \mbox{as} \quad \lambda_0 \to 0\,,
\end{equation}
where $R(\x;\bxi)$ is the regular part of the Neumann Green's function in
\eqref{intro:Gdef2}. Upon substituting \eqref{eig:rsmall} into
\eqref{eig:inteq} we obtain for $\lambda_0\ll 1$ 
\begin{equation}\label{eig:two_1}
  \frac{A(s)}{\nu} - \frac{2\pi}{\lambda_0 |\Om|} \int_{\Gam} A(s) \dd s
  + \left(\frac{1}{2} A(s) {\mathcal E}(s) - \frac{1}{2}
    \int_{\Gam} \frac{ A(s)-A(s^{\prime})}{|\X(s)-\X(s^{\prime})|} \dd
    s^{\prime}   +2\pi \int_{\Gam} A(s^{\prime}) R(\X(s); \X(s^{\prime}))
 \dd
    s^{\prime}    \right) = {\mathcal O}(\lambda_0) \,.
\end{equation}
We then substitute the expansions
\begin{equation}\label{eig:two_ex}
  A(s)=A_0(s)+ \nu A_1(s)+{\mathcal O}(\nu^2) \,, \qquad
  \lambda_0 = \nu\lambda_{00} +  \nu^2\lambda_{01} + {\mathcal O}(\nu^3)\,,
\end{equation}
into \eqref{eig:two_1} to obtain, subject to $\int_{\Gam} A(s) \dd s=1$, that
\begin{equation}\label{eig:two_mess}
  \begin{split}
  \frac{A_0}{\nu}  -\frac{2\pi}{\nu |\Om|\lambda_{00}} &
  \left( 1 + \frac{\nu \lambda_{01}}{\lambda_{00}}\right)^{-1}
  \int_{\Gam} \left(A_0 + \nu A_1\right) \dd s^{\prime}  \\
  & \qquad + A_1 + \frac{1}{2} A_0 {\mathcal E}(s) + 
  +2\pi \int_{\Gam} A_0(s^{\prime}) R(\X(s); \X(s^{\prime}))\dd
    s^{\prime}  =
  {\mathcal O}(\lambda_0)\,.
  \end{split}
\end{equation}
From the leading-order ${\mathcal O}(\nu^{-1})$ terms in
\eqref{eig:two_mess}
  we conclude that
\begin{equation}\label{eig:two_ex_a}
  A_{0}(s) = \frac{2\pi}{\lambda_{00} |\Om|} \int_{\Gam} A_0(s) \dd s \,,
  \quad \forall \,  s \in [0,L) \,.
\end{equation}
As such, $A_{0}(s)$ must be a constant, and from $\int_{\Gam} A_0(s)
\dd s=1$, we get $A_0={1/|\Gam|}$. Then, \eqref{eig:two_ex_a}
yields that
\begin{equation}\label{eig:two_ex_b}
  \lambda_{00} = \frac{2\pi |\Gam|}{|\Om|} \,.
\end{equation}

By collecting the ${\mathcal O}(1)$ terms in \eqref{eig:two_mess},
and using $\int_{\Gam} A_1(s) \dd s=0$, we conclude that
\begin{equation}\label{eig:two_ex_a1}
  -\frac{2\pi\lambda_{01}}{|\Om| \lambda_{00}^2} = A_{1}(s) +
  \frac{1}{2|\Gam|} {\mathcal E}(s) + \frac{2\pi}{|\Gam|}
  \int_{\Gam} R(\X(s);\X(s^{\prime})) \dd s^{\prime} \,,
    \quad \forall \,  s \in [0,L) \,.
\end{equation}
Instead of determining $A_{1}(s)$, we can obtain $\lambda_{01}$ by
simply integrating \eqref{eig:two_ex_a1} over $\Gam$ after using
$\int_{\Gam} A_1 \dd s=0$. We summarize our resulting two-term
expansion for the lowest eigenvalue of \eqref{eig:main} as follows:

\begin{proposition}\label{prop:eig_2term} 
In the limit $\nu={-1/\log[\eps d_c]} \to 0$, the lowest eigenvalue
for \eqref{eig:main} has the explicit two-term approximation
\begin{equation}\label{prop:eig_2term_m}
    \lambda_0 \sim \frac{2\pi\nu |\Gam|}{|\Om|} \left( 1-
    \frac{\nu}{2|\Gam|} \int_{\Gam} {\mathcal E}(s) \dd s -
      \frac{2\pi\nu }{|\Gam|} \int_{\Gam} \int_{\Gam} R\left(\X(s);
      \X(s^{\prime}) \right) \dd s \dd s^{\prime} + {\mathcal
        O}(\nu^2) \right) \,.
\end{equation}
Here $R$ is the regular part of the Neumann Green's function in
\eqref{intro:Gdef2} and ${\mathcal E}(s)$ is the intrinsic self-energy
of $\Gamma$ given by \eqref{sec2:intrinsic}.
\end{proposition}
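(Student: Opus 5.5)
The plan is to complete the perturbative computation already set up in \eqref{eig:two_1}--\eqref{eig:two_ex_a1}, which reduces the statement to a solvability (averaging) condition at $\mathcal{O}(1)$. The starting point is the log-summed integral equation \eqref{eig:inteq}. Since the expected eigenvalue is $\lambda_0=\mathcal{O}(\nu)$, we may substitute the small-$\lambda_0$ expansion \eqref{eig:rsmall} of $R_h$. The pole term $-1/(\lambda_0|\Om|)$ is then of order $\nu^{-1}$, the same order as $A/\nu$. The remaining $\mathcal{O}(\lambda_0)=\mathcal{O}(\nu)$ correction to $R_h$ only affects terms of relative order $\nu^2$, so it can be discarded.

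\textbf{Leading order.} Substituting \eqref{eig:two_ex} and using the normalization $\int_\Gam A\dd s=1$ (so $\int_\Gam A_0\dd s=1$ and $\int_\Gam A_1\dd s=0$), the $\mathcal{O}(\nu^{-1})$ balance \eqref{eig:two_ex_a} forces $A_0$ to be constant. Hence $A_0=1/|\Gam|$ and $\lambda_{00}=2\pi|\Gam|/|\Om|$, as in \eqref{eig:two_ex_b}.

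\textbf{First order.} I expand $\left(1+\nu\lambda_{01}/\lambda_{00}\right)^{-1}=1-\nu\lambda_{01}/\lambda_{00}+\mathcal{O}(\nu^2)$ and collect the $\mathcal{O}(1)$ terms. Because $A_0$ is constant, the difference integral $\int_\Gam (A_0(s)-A_0(s'))/|\X(s)-\X(s')|\dd s'$ vanishes identically; this is where the self-interaction reduces to the local quantity $\mathcal{E}(s)$ alone. The result is \eqref{eig:two_ex_a1}: its left side is a constant, and its right side contains the unknown $A_1(s)$ together with explicit functions of $s$.

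\textbf{Solvability.} Rather than solving for $A_1$, I integrate \eqref{eig:two_ex_a1} over $s\in\Gam$ and use $\int_\Gam A_1\dd s=0$. This gives
\begin{equation*}
  -\frac{2\pi\lambda_{01}|\Gam|}{|\Om|\lambda_{00}^2} = \frac{1}{2|\Gam|}\int_\Gam\mathcal{E}(s)\dd s + \frac{2\pi}{|\Gam|}\int_\Gam\int_\Gam R(\X(s);\X(s'))\dd s\dd s'.
\end{equation*}
Substituting $\lambda_{00}$ and simplifying the prefactor, using $\lambda_{00}^2|\Om|/(2\pi|\Gam|)=\lambda_{00}\cdot|\Gam|$, yields
\begin{equation*}
  \lambda_{01}=-\lambda_{00}\left(\frac{1}{2|\Gam|}\int_\Gam\mathcal{E}\dd s+\frac{2\pi}{|\Gam|}\int_\Gam\int_\Gam R\dd s\dd s'\right).
\end{equation*}
Inserting this into $\lambda_0=\nu\lambda_{00}+\nu^2\lambda_{01}$ gives exactly \eqref{prop:eig_2term_m}.

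As a consistency check, I will verify that this result agrees with $\lambda_0\approx 1/(D\bar u)$ to two orders by inverting \eqref{prop:2term_chi}, since $(1+\nu c)^{-1}\approx 1-\nu c$.

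\textbf{Main obstacle.} The only delicate point is justifying the orders. The term $\mathcal{O}(\lambda_0)$ in \eqref{eig:rsmall} must be uniform in $\x,\bxi\in\Gam$, which holds because $R_h+1/(\lambda_0|\Om|)$ is analytic in $\lambda_0$ near $0$ for $\dist(\Gam,\partial\Om)=\mathcal{O}(1)$. In addition, one must select the branch with $\lambda_{00}>0$ corresponding to the principal eigenvalue. This requires checking that the smallest positive root of \eqref{eig:inteq} is the one with constant leading-order density, which I would argue by continuity from $\nu=0$, where the principal Neumann eigenvalue is $0$.
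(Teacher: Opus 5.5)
Your proposal is correct and follows the paper's derivation essentially step for step. The $\mathcal{O}(\nu^{-1})$ balance forces $A_0=1/|\Gam|$ and $\lambda_{00}=2\pi|\Gam|/|\Om|$, and integrating \eqref{eig:two_ex_a1} over $\Gam$ with $\int_\Gam A_1\dd s=0$ gives $\lambda_{01}$ without solving for $A_1$; one small slip is that the identity you quote should read $\lambda_{00}^2|\Om|/(2\pi|\Gam|)=\lambda_{00}$ (not $\lambda_{00}\cdot|\Gam|$), although the formula $\lambda_{01}=-\lambda_{00}(\cdots)$ you then write down is the correct one.
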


Upon comparing \eqref{prop:eig_2term_m} with the corresponding two-term
expansion of $\bar{u}\approx \chi$ in \eqref{prop:2term_chi}, we
conclude up to two-orders in $\nu$ we have the expected correspondence
\begin{equation}\label{eig:corres}
  \chi = \frac{1}{D\lambda_0} + {\mathcal O}(\nu) \,, \quad
  \mbox{where} \quad \lambda_{0} = \nu\lambda_{00} + \nu^2\lambda_{01} +
  {\mathcal O}(\nu^3) \,.
\end{equation}
It is unclear whether this relation holds for all logarithmic orders.
As a result, for a concentric equatorial ring filament of radius $r_0$
in the unit sphere, we can use \eqref{sec3:ubar} for $\chi$ to obtain
the two-term approximation
\begin{equation}\label{eig:12term}
  \lambda_0 \sim 3\pi r_0\nu \left[
    1 +  r_0 \nu \left( \frac{\log(8r_0)}{r_0} + 2 K(r_0^2) + \pi
    \left(r_0^2 + \log{2} - \frac{14}{5} \right) + \frac{J(r_0)}{2} \right)
    \right]^{-1} \,,
\end{equation}
which is asymptotically equivalent to \eqref{prop:eig_2term_m}. In
\eqref{eig:12term}, $K(m)$ is the complete elliptic integral and $J(r_0)$
is defined in \eqref{sec3:I0J0}.

\subsubsection{Summing the log expansion for a ring filament}\label{eig:ringsum}

In an arbitrary 3-D domain with an arbitrary filament shape, a full
numerical implementation of \eqref{eig:inteq} is needed to compute the
log-summed appproximation $\lambda_{0}^{\star}$ in terms of a 
numerically computable $R_h$, e.g., such as from a fast-integral method.
For the unit sphere where $R_h$ has an infinite series expansion, a
numerical discretization of \eqref{eig:inteq} would lead to a
nonlinear matrix eigenvalue problem to approximate
$\lambda_{0}^{\star}$.

However, in the special case when $\Om$ is the unit sphere and when
$\Gam$ is an equatorial ring of radius $r_0$, we now show how to
determine $\lambda_{0}^{\star}$ from a simple root-finding
problem. For this configuration, it follows by symmetry that the
line density has the constant value $A(s)={1/L}$, with $L=2\pi
r_0$. Therefore, from \eqref{eig:inteq} with ${\mathcal E}(s)=2\log(8
r_0)$ we obtain that $\lambda_0$ satisfies
\begin{equation}\label{eig:inteq_ring}
   \int_{\Gam}  R_h(\X(s);  \X(s^{\prime});\lambda_0)
  \dd s^{\prime} = -\frac{1}{2\pi} \left[ \frac{1}{\nu} + \log(8r_0)\right] =
  -\frac{1}{2\pi} \log\left(\frac{8r_0}{\eps d_c}\right) \,.
\end{equation}
By using an infinite series representation of $R_h$, in Appendix
\ref{appc:eig} we show that the determination of $\lambda_{0}^{\star}$
can be reduced to determining the smallest positive root of
${\mathcal N}(\lambda_0)=0$, defined by
\bsub\label{eig:nfunc}
\begin{equation}\label{eig:nfunc_1}
  {\mathcal N}(\lambda_0) \equiv \frac{1}{2\pi}
  \log\left( \frac{8 r_0}{\eps d_c}\right) + \sum_{k=0}^{\infty}
  M_{k}(\lambda_0,r_0) \,; \qquad
  M_{k}(\lambda_0,r_0) \equiv \left[ \sqrt{\lambda_0} r_0 \left(2k+\frac{1}{2}
    \right) H_{2k} - \frac{1}{2} \right]
    \left( \frac{ (2k)!}{ 2^{2k} (k!)^{2}} \right)^{2}\,,
\end{equation}
where $H_{2k}=H_{2k}(\lambda_0,r_0)$ is defined by
\begin{equation}\label{eig:nfunc_h}
     H_{2k}(\lambda_0,r_0) \equiv
  \left[ y_{2k}^{\prime}(\sqrt{\lambda_0}) j_{2k}(\sqrt{\lambda_0} r_0) -
    j_{2k}^{\prime}(\sqrt{\lambda_0}) y_{2k}(\sqrt{\lambda_0} r_0) \right]
  \frac{ j_{2k}(\sqrt{\lambda_0} r_0)}{j_{2k}^{\prime}(\sqrt{\lambda_0})} \,.
\end{equation}
\esub
Here $j_{2k}(z)$ and $y_{2k}(z)$ are the spherical Bessel functions of the
first and second kind of order $2k$.

With regards to numerically computing the root to ${\mathcal
  N}(\lambda_0)=0$, we can readily estimate that $M_{k}={\mathcal
  O}\left({r^{4k+1}/k}\right)$ for large $k$ by using Stirling's
approximation and the large-order approximation of the spherical
Bessel functions. As a result, $M_k$ has a power-law decay as $k$
increases when $r$ is not too close to the boundary $r=1$. For
$r_0=0.4$, and by retaining only 10 terms in the sum in
\eqref{eig:nfunc_1}, in Fig.~\ref{fig:eig_N} we plot ${\mathcal
  N}(\lambda_0)$ versus $\lambda_0$ when $\eps d_c=0.02$ showing the
existence of a root. In fact, retaining only 3 terms in the sum
provides three decimal places of accuracy for the root. By tracking
this root as $\eps d_c$ is varied, in Fig.~\ref{fig:eig_comp} and
Table \ref{tab:eig} we compare the log-summed approximation
$\lambda_{0}^{\star}(\nu)$ with its corresponding one-term
approximation $\lambda\sim 3\pi \nu r_0$ and two-term approximation
from \eqref{eig:12term}. The full numerical FEM results are also
shown. We observe that the one-term approximation is poor unless
$\eps$ is extremely small, while the two-term appproximation decently
predicts $\lambda_{0}^{\star}$ to within $3\%$ only for $\eps d_c\leq
0.02$.  From this table, we see that the log-summed result is within
$4.7\%$ of the full numerical result even when $\eps=0.10$.

For fixed $\eps$, it was shown in Apppendix \ref{app:steklov} that
$d_{c}(b)$ is monotone increasing in the reactivity $b$ of the
filament. As a result, we observe from Fig.~\ref{fig:eig_comp} that
the time-scale for capture of a Brownian particle, determined by
${1/\lambda_{0}^{\star}}$, is smaller as $b$ increases, as expected
intuitively. Moreover, by changing the cross-sectional shape of the
filament from a disk or its reactivity, we can still use
Fig.~\ref{fig:eig_comp} after simple computing $d_{c}(b)$
numerically. For instance for an elliptical cross-section
$\partial{\mathcal B}$ of semi-axes $a_1=1$ and $a_2=2$ with $b=1$, we
calculate $d_c(b) \approx 0.7725$ \cite{Grebenkov25b}.  As a result,
we conclude that a perfectly absorbing filament with a circular
cross-sectional radius $\eps_0$ with $\eps_0\equiv \eps d_c=\eps
0.7725$ would have the same $\lambda_{0}^{\star}$ as that for the
partially reactive elliptical cross-section.

\begin{table}[ht]
\centering
\begin{tabular}{cccccc}
\hline
$\eps d_c$  & $\lambda_0\,$ (1-term) & $\lambda_0\,$ (2-term) &
$\lambda_{0}^{\star}\,$ (log-summed) & $\lambda_0\,$ (num) & $\%\,$ error \\
\hline
0.005   & 0.7115     & 0.8543 & 0.8447 & 0.8449  & 0.02 \\
0.01    & 0.8183     & 1.0135  & 0.9975 & 0.9983 & 0.08 \\
0.02    & 0.9637     & 1.2456 & 1.2161 & 1.2196  & 0.28  \\
0.05   & 1.2584        & 1.7865 & 1.7016 & 1.7263 & 1.4 \\
0.10   & 1.6373     & 2.6603  & 2.3993 & 2.5189 & 4.7 \\
\hline
\end{tabular}
\caption{Comparison of one-term $\lambda_0\sim 3\pi \nu r_0$, and the
  two-term approximation in \eqref{eig:12term} for the lowest
  eigenvalue of \eqref{eig:main} for an equatorial ring filament of
  radius $r_0=0.4$ in the unit sphere with the corresponding
  log-summed approximation $\lambda_0^{\star}$ given by the smallest
  root of \eqref{eig:nfunc} and the FEM full numerical result
  $\lambda_0$ (num). The percentage error in the last column is
  between the log-summed and the FEM numerical result. This error is
  less than $1.4\%$ on the range $\eps\le 0.05$.}
 \label{tab:eig}
\end{table}

\begin{figure}[h!tbp]
  \centering
      \begin{subfigure}[b]{0.48\textwidth}  
        \includegraphics[width=\textwidth]{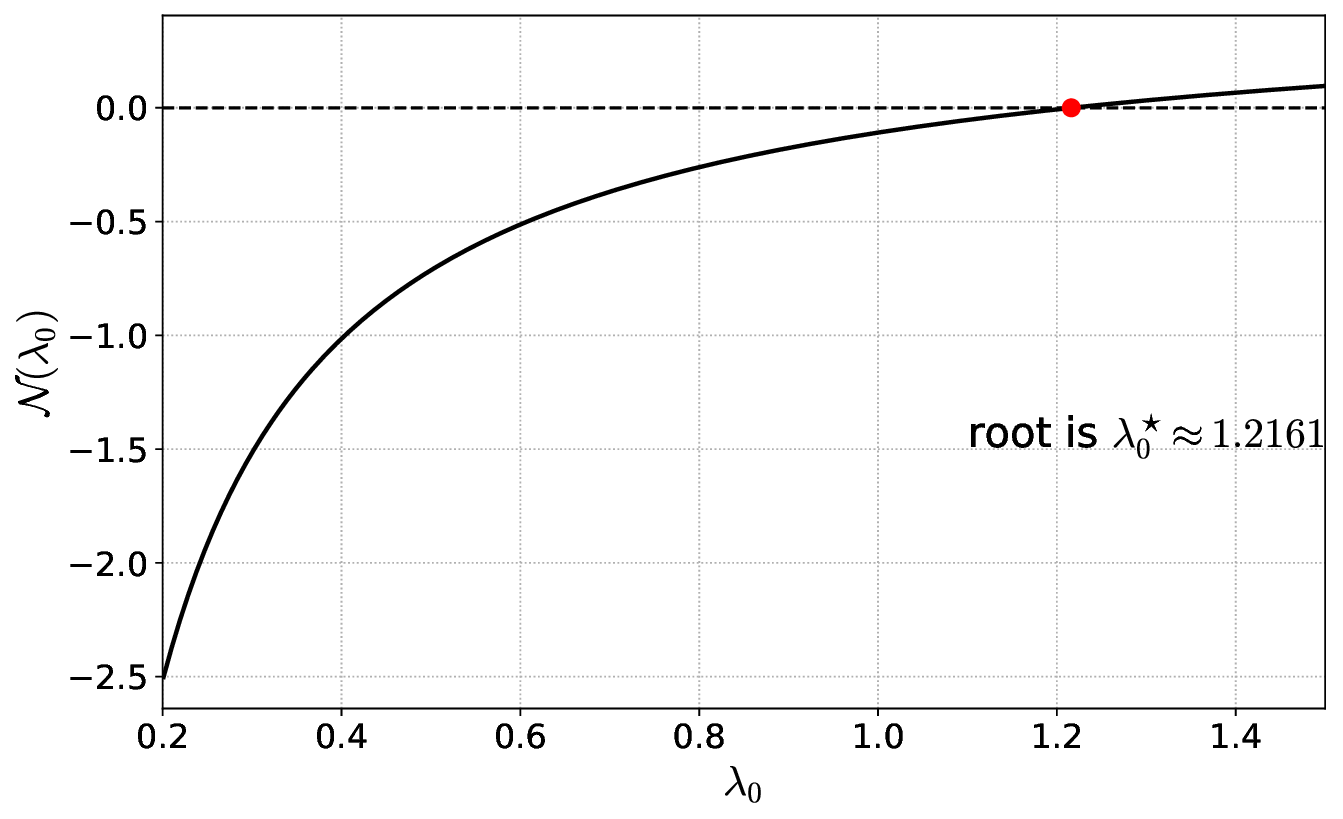}
        \caption{${\mathcal N}(\lambda_0)$ vs. $\lambda_0$}
        \label{fig:eig_N}
    \end{subfigure}
      \begin{subfigure}[b]{0.48\textwidth}
            \includegraphics[width=\textwidth]{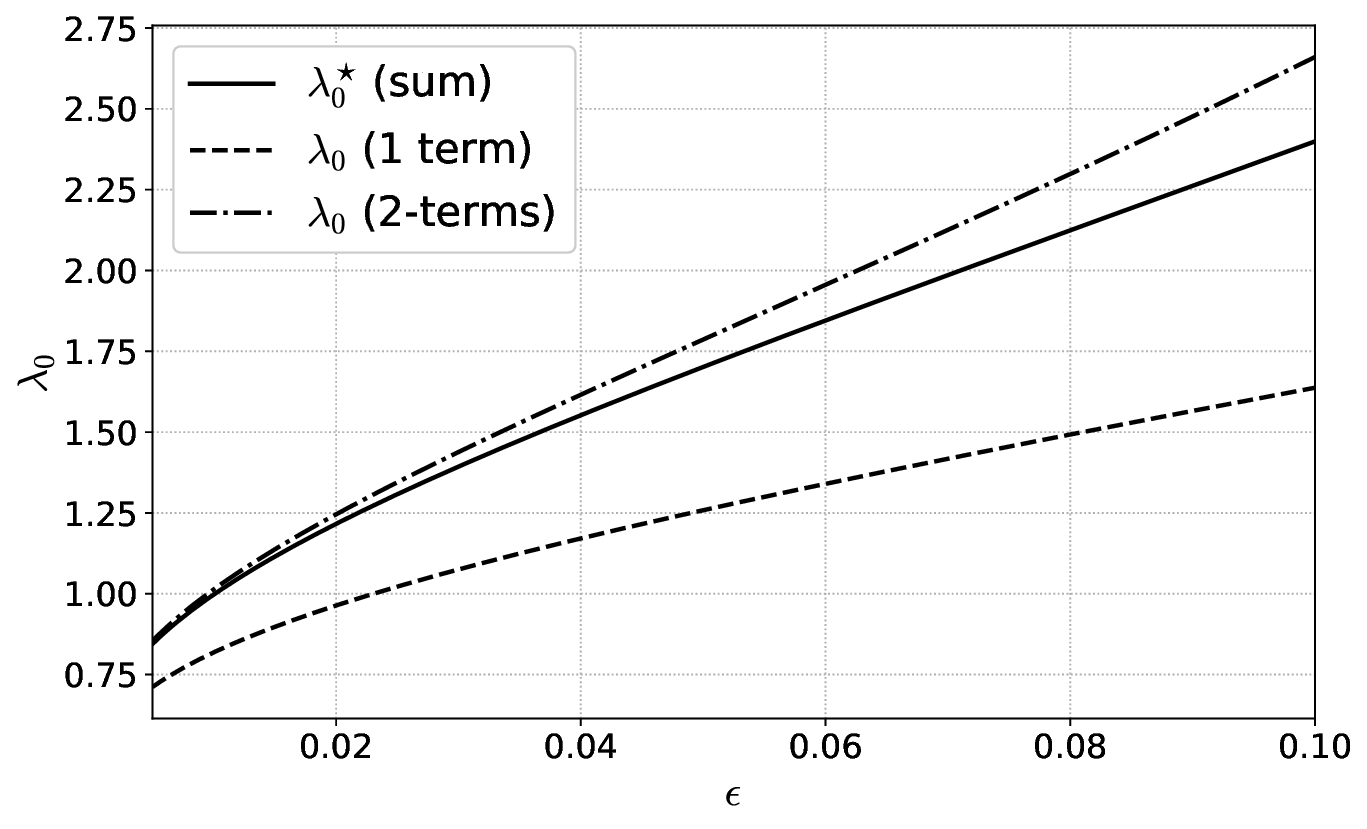}
      \caption{$\lambda_0$ vs. $\eps$}
      \label{fig:eig_comp}
    \end{subfigure}
      \caption{Approximation of the lowest eigenvalue $\lambda_0$ of
        the Laplacian in the unit sphere for a perfectly absorbing
        equatorial filament with ring radius $r_0=0.4$ and
        cross-sectional radius $\eps$.  Left: ${\mathcal
          N}(\lambda_0)$, defined in \eqref{eig:nfunc} versus
        $\lambda_0$ for $\eps=0.02$ showing the existence of a root
        $\lambda_{0}^{\star}$ for the log-summed approximation. Right:
        Comparison of the one-term $\lambda_0\sim 3\pi r_0\nu$ and
        two-term approximations from \eqref{eig:12term} with the
        log-summed approximation, computed from the zero of ${\mathcal
          N}(\lambda_0)$ in \eqref{eig:nfunc}, as $\eps$ is varied.}
    \label{fig:eig}
\end{figure}

\section{Fragmenting a filament into $N$ uniformly-spaced small
  targets}\label{sec6:ball-ring}

We revisit the MFRT problem \eqref{intro:mfrt} for a perfectly
absorbing filament $\Gam_\eps$ of circular cross-sectional radius
$\eps$ and of length $L$ satisfying
\begin{equation}\label{sec6:MFPT}
    \Delta_{\x} u=-\frac{1}{D} \,, \quad \x\in \Om\setminus\Gam_\eps\,; \qquad
     u=0 \,, \quad \x \in \partial\Gam_\eps\,; \qquad
\partial_n u=0\, \quad \x \in \partial\Om \,.
\end{equation}
Suppose that we fragment the filament $\Gamma_{\eps}$, replacing it by
a string of $N$ identical perfectly absorbing small targets 
$\Omega_{\sigma,j}$ of diameter ${\mathcal O}(\sigma)$, referred to
as {\em microtraps}, that are uniformly spaced in arclength along the
filament centerline $\Gam$.
Examples of such fragmentation for a straight filament, an
equatorial ring filament, and a Trefoil knot filament are shown in
Fig.~\ref{fig:all_frag} (recall also Fig.~\ref{fig:sphere_tube_balls}
of \S \ref{sec2:conc_ring}).  The rescaled canonical shape of a 3-D
microtrap is labeled by $\Omega_0 \equiv
\Omega_{\sigma,j}/\sigma$ (for example, the unit sphere, or a flat
disk in 3-D) having capacitance $C$.
We place $N$ microtraps $\Omega_{\sigma,j}$, with $j =
1,\ldots,N$, along the centerline $\Gam$ of the filament
$\Gam_{\varepsilon}$, uniformly spaced and with their orientation aligned with the tangent of the centerline. The absorbing set, resulting from
the union of these $N$ microtraps, is denoted by $F_{\sigma}$, which we shall refer to as the {\em fragmented
filament.}

Our goal is to establish the relation between $\eps$ and $\sigma$ for
which the global MFRT for a filament of circular cross-sectional
radius $\eps$ is asymptotically the same as that for the fragmented
filament $F_{\sigma}$. In particular, we will show that the relation
\eqref{sec3:eps_to_sigma} derived for the specific case of the
fragmentation of an equatorial filament in the unit sphere applies for
an arbitrary filament in $\Omega$, but only in the {\em transition
scaling regime} for which ${\mathcal O}(\sigma)\ll {L/N} \ll {\mathcal
O}(1)$. Since for ${L/N}\ll 1$ the fragmentation of a filament is only
felt locally, the filament centerline is nearly straight between
adjacent microtraps. The other requirement that ${L/N}\gg {\mathcal
O}(\sigma)$ implies that adjacent microtraps are still separated by
distances asymptotically larger than their ${\mathcal O}(\sigma)$
diameters. Owing to the uniform
spacing of microtraps on the filament centerline, the considerations
above suggest we need only consider a new inner problem on a ${L/N}\ll
1$ length scale that contains a single microtrap on a smaller
${\mathcal O}(\sigma)$ length-scale but that has a locally straight
centerline. This solution can then be extended periodically down the
filament. The analysis of this new inner problem shows that the effect
of fragmentation in the scaling regime ${\mathcal O}(\sigma)\lesssim {L/N}
\ll {\mathcal O}(1)$ is only to change the logarithmic capacitance of
a filament. More specifically, our analysis shows that the MFRT for
the fragmented filament is asymptotically equivalent to that for a
full unfragmented filament,  when the cross-sectional radius
$\eps$ is appropriately related to $N$, $\sigma$, and the capacitance
$C$ of the microtrap.  As a result, to determine the MFRT for a
fragmented filament we need only redefine $\eps$ in
$\nu={-1/\log\eps}$ in the Fredholm integral equation derived in
\eqref{prop:inteq} of Proposition \ref{prop:int_eq}.  Our main result,
which we derive systematically in Appendix \ref{app:slab}, is as
follows:

\begin{proposition}\label{mr:main}
Let $\bar{u}_{\Gam_{\eps}}$ and $\bar{u}_{F_{\sigma}}$ be the global
MFRTs for the perfectly absorbing filament $\Gam_{\eps}$ with a
circular cross-section of radius $\eps$ and for its fragmented version
$F_{\sigma}$, respectively.  Denote by $2a\equiv {L/(N\sigma)}$ the
spacing between adjacent microtraps of $F_{\sigma}$, scaled
relative to the common size $\sigma$ of each microtrap. In the limit
$N/L\gg 1$,  $\sigma \ll 1$ we have
$\bar{u}_{\Gam_{\eps}}\sim \bar{u}_{F_{\sigma }}$ when
\begin{equation}\label{sec6:eps-omega}
\eps =\sigma \exp \left( -\omega \left( a\right) \right) \,.
\end{equation}
Here $\omega=\omega(a)$, referred to as the infinite slab capacitance,
is defined in terms of the far-field behavior of the solution to the
new canonical inner problem
\begin{equation}\label{sec6:new_inner}
\begin{split}
  \Delta V_c & =0\,, \quad \left\vert Z\right\vert <a\,, \quad
  (X,Y,Z)\in \R^3 \setminus\Omega_0\,, \\ 
  V_c  & =0\,, \quad  (X,Y,Z)\in \partial \Omega_0\,; \qquad
 V_c(X,Y,a) =V_c(X,Y,-a)\,, \quad V_{cZ}(X,Y,a)=V_{cZ}(X,Y,-a)\,,\\
 V_c & \sim \log(\rho) +\omega \,, \quad \mbox{as} \quad \rho \equiv
 \sqrt{X^{2}+Y^{2}}\to \infty \,,
 \end{split}
\end{equation}
where $\Omega_0$ is centered at the origin $(X,Y,Z)=(0,0,0)$.  For
$a={\mathcal O}(1)$, corresponding to $N/L={\mathcal O}(\sigma^{-1})$,
we need to determine $\omega=\omega(a)$ numerically. However, in the
transition scaling regime for which $a\gg 1$, we have
\begin{equation}\label{sec6:a_large}
  \omega \sim \frac{a}{C}-\log (4a)+\gamma _{e}\,,
\end{equation}
where $\gamma_e$ is Euler's constant, so that
\begin{equation}
\eps \sim \frac{2L}{N}\exp \left( -\frac{L}{2N\sigma C}-\gamma
_{e}\right), \quad \mbox{for} \quad \frac{L}{N\sigma} \gg 1\,.  \label{sec6:eps-sig}
\end{equation}%
Here $C$ is the capacitance of the microtrap $\Omega_0$ defined by the
local problem \eqref{ball:capac}. Two special cases are $C=1$ if
$\Omega_0$ is the unit ball, and $C={2/\pi}$ when $\Omega_0$ is a flat
unit disk.
\end{proposition}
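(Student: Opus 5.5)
The plan is a three-scale matched asymptotic expansion: an outer region at ${\mathcal O}(1)$ distances from $\Gam$, an intermediate region at distances ${\mathcal O}(L/N)$, and a microtrap region at scale $\sigma$. I will show that, viewed from the outer region, the fragmented filament $F_\sigma$ produces the same line singularity as a continuous filament. The only difference is the constant in the logarithm, which is the regular part in \eqref{sec2:out_1b}.

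\textbf{Step 1: the periodic inner problem.} Near a point $\X(s)$, I rescale by $\sigma$ and introduce local coordinates $(X,Y,Z)$, with $Z$ along the tangent $\T(s)$. Since $L/N\ll 1$, curvature enters only at relative order ${\mathcal O}(L/N)$. To leading order the centerline is therefore straight, and the microtraps sit at $Z=2ak$ for $k\in\mathbb{Z}$, with $2a=L/(N\sigma)$. The leading-order inner solution is then $2a$-periodic in $Z$, so it suffices to pose it on one cell $|Z|<a$ with periodic conditions. This is exactly \eqref{sec6:new_inner}, and I take $v\sim \mathcal{A}(s)V_c$ with a slowly varying strength $\mathcal{A}(s)$.

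\textbf{Step 2: far field of the inner problem.} For $\rho\to\infty$, the nonzero Fourier modes in $Z$ decay like $K_0(\pi m\rho/a)$. What remains is the $Z$-average, which is 2-D harmonic with unit logarithmic growth; this gives $V_c\sim\log\rho+\omega$. Writing $\rho=\eta/\sigma$, the far field becomes $\mathcal{A}(\log\eta-\log\sigma+\omega)$. The continuous filament with $d_c=1$ has far field $A(\log\eta-\log\eps)$ from \eqref{sec2:inn_match}. Matching the two far fields sets $\mathcal{A}=A$ and requires $-\log\eps=-\log\sigma+\omega$, which is \eqref{sec6:eps-omega}.

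Both problems then reduce to the same outer problem \eqref{sec2:out_1} with the same $\nu$. Hence Proposition~\ref{prop:int_eq} gives the same $A(s)$ and the same $\chi$, and so $\bar u_{\Gam_\eps}\sim\bar u_{F_\sigma}$. As a consistency check, the flux per cell is $2\pi\cdot 2a\,\mathcal{A}$ in inner units. Summed over the $N$ cells, this matches the continuous flux \eqref{sec2:flux}.

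\textbf{Step 3: large-$a$ asymptotics of $\omega$ (the main obstacle).} For $a\gg1$, a single trap appears point-like on the cell scale. Near the trap, $V_c\sim \beta(1-w_c)$, where $w_c$ is the capacitance solution \eqref{ball:capac}. This gives $V_c\approx \beta - \beta C/|\y|$ near the trap. I write $V_c=\beta-\beta C\,\mathcal{P}$, where $\mathcal{P}$ is the periodic Green's function of a line of unit point charges with spacing $2a$. Concretely,
\begin{equation*}
\mathcal{P}=\sum_{k}\left(\frac{1}{\sqrt{\rho^2+(Z-2ak)^2}}-\text{renormalization}\right).
\end{equation*}
Its $Z$-average gives $-(1/a)\log\rho$ plus a constant. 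Fixing unit log growth requires $\beta C/a=1$, so $\beta=a/C$.

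The constant is obtained by comparing the behavior near the origin: $\mathcal{P}\sim 1/|\y| + c_a$, where $c_a$ is the regular part. I will compute $c_a$ from the Lattice-sum identity
\begin{equation*}
\sum_{k\ne0}\frac{1}{2a|k|}\;\text{vs}\;\frac{1}{a}\int\frac{\dd\zeta}{\sqrt{\rho^2+\zeta^2}},
\end{equation*}
which uses the expansion $\frac{1}{a}\sum_{k=1}^{M} 1/k = \frac{1}{a}(\log M+\gamma_e)$ together with the integral $\frac{1}{a}\log(4aM/\rho)$ over a truncated range. This yields $c_a=\frac1a(\log(\rho/4a)+\gamma_e)$-type terms. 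Requiring $V_c=0$ on the trap up to $o(1)$ then fixes $\omega=\beta-\log(4a)+\gamma_e$, which is \eqref{sec6:a_large}.

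The main difficulty is getting the constant in this lattice sum exactly right. I will check the result against Lemma~\ref{lemma:sn}, since $\log(2N/\pi)+\gamma_e$ arises in the same way there, and against \eqref{sec3:eps_to_sigma}. Substituting $a=L/(2N\sigma)$ gives \eqref{sec6:eps-sig}.
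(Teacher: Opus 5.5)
Your proposal is correct and follows essentially the same route as the paper: straighten the centerline on the $L/N$ scale to obtain the periodic slab problem, match $A(\log\eta-\log\sigma+\omega)$ against $A(\log\eta-\log\eps)$ to get $\eps=\sigma e^{-\omega}$, and for $a\gg1$ combine the point-trap inner solution $\beta(1-w_c)$ with a renormalized image sum whose far field comes from comparing a Riemann sum with harmonic numbers (the paper does the same computation in variables stretched by $a$, using the slab Green's function of Lemma~\ref{lem:gslab}, and your Fourier-mode argument for the far field is a small extra justification). The one fix is bookkeeping: the constant you call $c_a$ cannot depend on $\rho$, so normalize the image sum to have zero regular part at the origin (this is what enforces $V_c=0$ on the trap); its far field is then $-\frac1a\left(\log(\rho/4a)+\gamma_e\right)$, which with $\beta=a/C$ gives $\omega=a/C-\log(4a)+\gamma_e$.
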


This new canonical inner problem for $V_c$ for a fragmented filament
with perfectly absorbing microtraps replaces the inner problem
\eqref{sec2:in_prob} for a full filament of cross-sectional radius
$\eps$. Recall from \eqref{prop:2term_chi} that, to leading-order, we
have $\bar{u}\sim \frac{|\Omega |}{2\pi DL}\log\left({1/\eps}\right)$
for a perfectly absorbing filament of circular cross-sectional radius
$\eps$. By using \eqref{sec6:eps-sig}, in the
limit $\sigma \rightarrow 0$ we get to leading-order that $-\log\eps
\sim {L/(2N\sigma C)}$ so that for the fragmented filament we have
$\bar{u}\sim \frac{|\Omega | }{4\pi DN\sigma C}$. This latter
estimate agrees with the leading-order result for $N$ identical
microtraps inside a 3-D domain $\Omega$ (see
\cite{ChevWard2010}). Effectively, filament fragmentation provides a
transition between the ${\mathcal O}(-\log\eps)$ behavior in 2-D and
the ${\mathcal O}(\sigma ^{-1})$ scaling of spatially localized traps
in 3-D. We now consider the three specific examples of fragmented
filaments in Fig. \ref{fig:all_frag}.

\begin{figure}[tbp]
\centering
\setlength{\tabcolsep}{0pt} 
\begin{tabular}{ccc@{\hspace{0.04\textwidth}}cc@{\hspace{0.04\textwidth}}cc}
\multicolumn{3}{c}{\textbf{(a)} straight} & \multicolumn{2}{c}{\textbf{(b)}
ring} & \multicolumn{2}{c}{\textbf{(c)} trefoil} \\ 
\includegraphics[width=\pw]{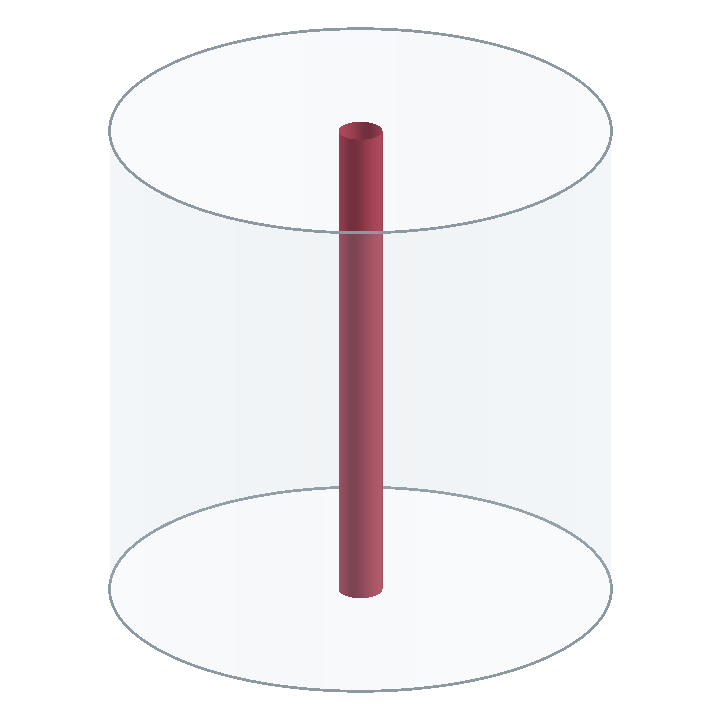} & \includegraphics[width=%
\pw]{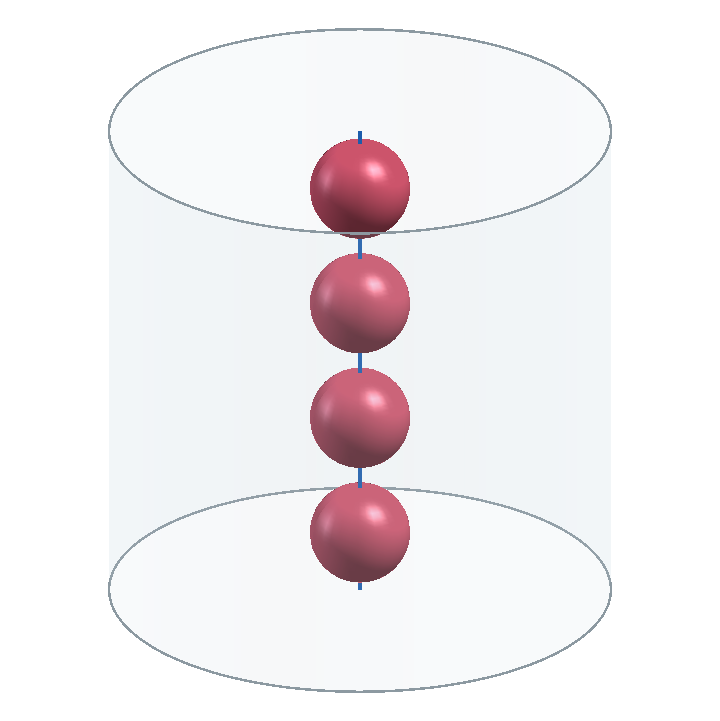} & \includegraphics[width=\pw]{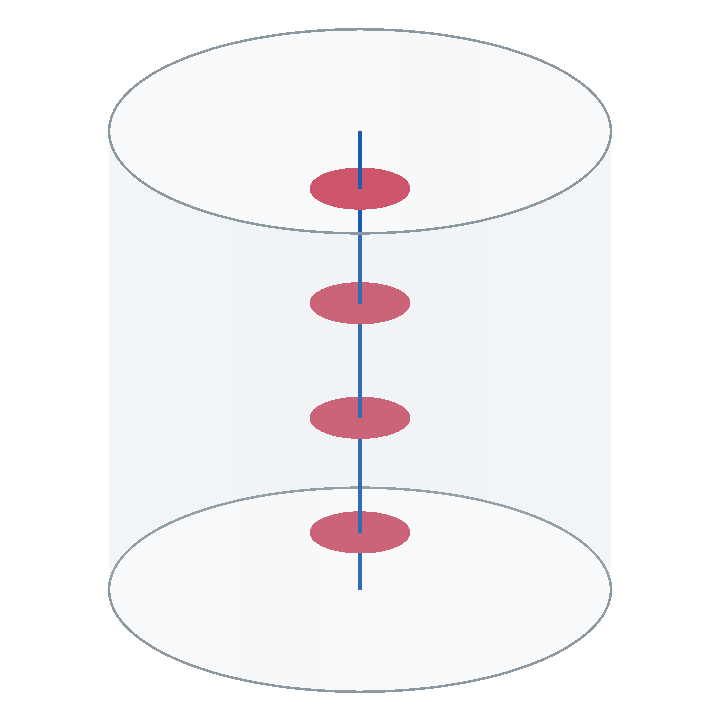} & %
\includegraphics[width=\pw]{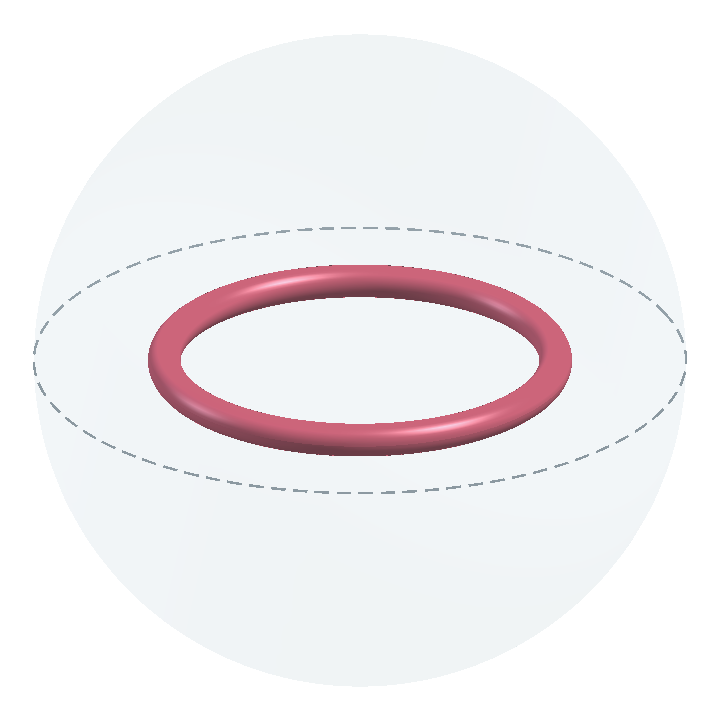} & \includegraphics[width=%
\pw]{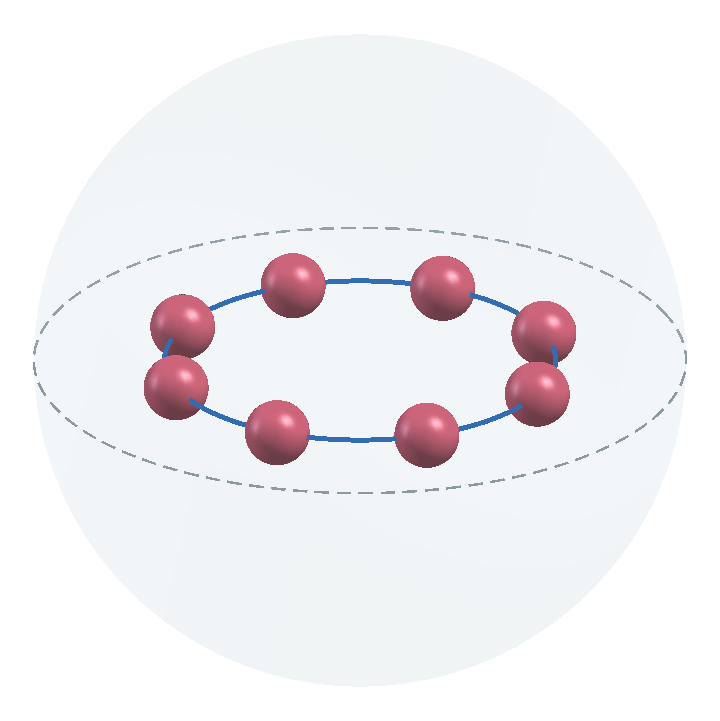} & \includegraphics[width=\pw]{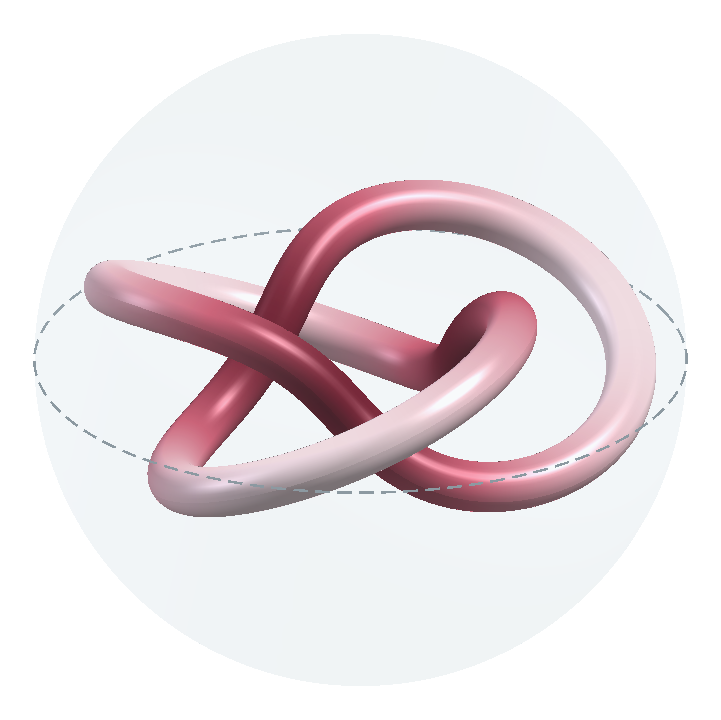} & %
\includegraphics[width=\pw]{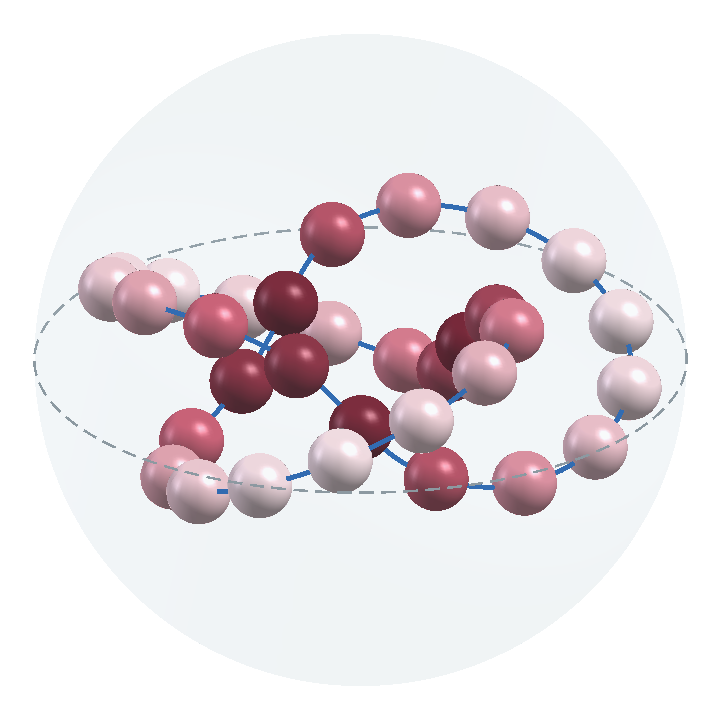} \\ 
{\footnotesize $\varepsilon =0.0878$} & {\footnotesize $N=4$ spheres} & 
{\footnotesize $N=4$ disks} & {\footnotesize $\varepsilon =0.0502$} & 
{\footnotesize $N=8$ spheres} & {\footnotesize $\varepsilon =0.0775$} & 
{\footnotesize $N=32$ spheres} \\[2pt] 
&  &  &  &  & \multicolumn{2}{c}{\includegraphics[width=0.26%
\textwidth]{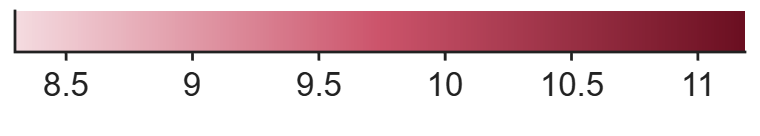}} \\ 
&  &  &  &  & \multicolumn{2}{c}{\scriptsize self-energy $E(s)$}%
\end{tabular}%
\caption{Filaments vs. fragmented filaments. (a) Straight cylinder of
   height $L=2$ and unit radius with a thin filament centered on the 
   vertical axis that is  fragmented into four small spheres or disks of 
   radius $\sigma = 0.2$. The global MFRT \eqref{frag-cyl:mfpt} for a 
   circular filament of cross-sectional radius $\eps =0.0878$ is 
   asymptotically equivalent to that of the four
  disks. (b) Equatorial filament of radius $\eps =0.0502$ and its
  equivalent fragmentation into 8 spherical microtraps of radius
  $\sigma =0.1.$ (c)\ Trefoil knot \eqref{eqn:wiggly_curve_conv} with
  $p=3$, $q=2$, $r_{0}=0.6$ and $\rho _{0}=0.25$ and its equivalent
  fragmentation into 32 spherical microtraps of radius
  $\sigma=0.1$. Color-coded by the intrinsic self-energy ${\mathcal
  E}(s)$ of the filament centerline from \eqref{sec2:intrinsic}. }
\label{fig:all_frag}
\end{figure}

\textbf{Straight vertical filament inside a cylinder.} Let $\Omega$
be a cylinder of height $L$ and radius 1 that contains a straight
vertical filament $\Gamma _{\eps}$ with circular cross-sectional
radius $\eps$ that is centered on the axis of symmetry (see
Fig. \ref{fig:all_frag}(a) (left)).  Owing to radial symmetry, an
exact result for the global MFRT for the filament is available, and we
calculate from the solution to $(r u_r)_r=-r/D$ with $u_r(1)=0$ and
$u(\eps)=0$ that
\begin{equation}\label{frag-cyl:mfpt}
\bar{u}_{\Gamma _{\varepsilon }}=\frac{1}{D} \left[\frac{\log (1/\varepsilon )}{%
2(1-\varepsilon ^{2})}-\frac{3}{8}+\frac{\varepsilon ^{2}}{8}\right] \,.
\end{equation}

To test our main result in Proposition \ref{mr:main} when $D=1$, we
take $L=2$ and fragment $\Gam_{\eps}$ into $N$ small spheres of radius
$\sigma$, uniformly spaced as shown in
Fig. \ref{fig:all_frag}(a) (middle). Since $C=1$ we predict from
\eqref{sec6:eps-sig} that $\eps \sim \frac{4}{N}\exp \left(
-\tfrac{1}{N\sigma }-\gamma _{e}\right)$ when $\sigma\ll {L/N}\ll 1$.
To compute full numerical solutions, we observe by symmetry that this
problem is equivalent to the MFRT problem for a single sphere at the
center of a cylinder of height $L/N$. The resulting 2-D problem in
$(r,z)$ is readily solved numerically to high accuracy (see Appendix
\ref{app:slab_num}). A comparison between the asymptotic predictions
and the full numerical results is shown in Table \ref{tab:cyl}.

\begin{center}
\begin{tabular}{r@{\hspace{12pt}}c@{\hspace{20pt}}c@{\hspace{20pt}}c@{\hspace{12pt}}c@{\hspace{12pt}}c@{\hspace{12pt}}c@{\hspace{12pt}}c@{\hspace{12pt}}c}
\toprule$N$ & $a$ & $\eps_{\text{asy}}$ & $\eps_{\omega}$ & $%
\bar{u}_{F_{\sigma },\, \text{num}}$ & $\bar{u}_{\Gam_{\eps}}(\eps_{\text{asy}})$ & $\bar{u}_{\Gam_{\eps}}(\eps_{\omega})$ & err$_{\text{asy}}$ & err$_{\omega}$ \\ 
\multicolumn{9}{l}{$\sigma =0.2$} \\ 
1 & 5.000 & $0.01513$ & $0.01513$ & 1.75068 & 1.72097 & 1.72095 & $+1.73\%$
& $+1.73\%$ \\ 
2 & 2.500 & $0.09217$ & $0.09226$ & 0.83906 & 0.82831 & 0.82787 & $+1.30\%$
& $+1.35\%$ \\ 
3 & 1.667 & $0.1414$ & $0.1424$ & 0.62373 & 0.62555 & 0.62237 & $-0.29\%$ & $%
+0.22\%$ \\ 
4 & 1.250 & $0.1609$ & $0.1652$ & 0.55216 & 0.56611 & 0.55402 & $-2.46\%$ & $%
-0.34\%$ \\ 
\multicolumn{9}{l}{$\sigma =0.1$} \\ 
1 & 10.000 & $1.020\times 10^{-4}$ & $1.020\times 10^{-4}$ & 4.23418 & 
4.22046 & 4.22046 & $+0.33\%$ & $+0.33\%$ \\ 
2 & 5.000 & $0.007566$ & $0.007566$ & 2.07314 & 2.06718 & 2.06717 & $+0.29\%$
& $+0.29\%$ \\ 
4 & 2.500 & $0.04609$ & $0.04613$ & 1.16966 & 1.16715 & 1.16669 & $+0.22\%$
& $+0.25\%$ \\ 
8 & 1.250 & $0.08043$ & $0.08259$ & 0.88040 & 0.89420 & 0.88135 & $-1.54\%$
& $-0.11\%$ \\ 
9 & 1.111 & $0.08215$ & $0.08582$ & 0.86186 & 0.88396 & 0.86281 & $-2.50\%$
& $-0.11\%$ \\ 
\multicolumn{9}{l}{$\sigma =0.05$} \\ 
1 & 20.000 & $4.629\times 10^{-9}$ & $4.629\times 10^{-9}$ & 9.22963 & 
9.22046 & 9.22046 & $+0.10\%$ & $+0.10\%$ \\ 
2 & 10.000 & $5.098\times 10^{-5}$ & $5.098\times 10^{-5}$ & 4.56864 & 
4.56703 & 4.56703 & $+0.04\%$ & $+0.04\%$ \\ 
4 & 5.000 & $0.003783$ & $0.003783$ & 2.41523 & 2.41365 & 2.41364 & $+0.07\%$
& $+0.07\%$ \\ 
8 & 2.500 & $0.02304$ & $0.02306$ & 1.51158 & 1.51125 & 1.51079 & $+0.02\%$
& $+0.05\%$ \\ 
16 & 1.250 & $0.04022$ & $0.04130$ & 1.22108 & 1.23456 & 1.22144 & $-1.09\%$
& $-0.03\%$ \\ 
17 & 1.176 & $0.04074$ & $0.04217$ & 1.21074 & 1.22817 & 1.21110 & $-1.42\%$
& $-0.03\%$ \\ 
18 & 1.111 & $0.04107$ & $0.04291$ & 1.20213 & 1.22411 & 1.20248 & $-1.80\%$
& $-0.03\%$ \\ 
19 & 1.053 & $0.04125$ & $0.04354$ & 1.19487 & 1.22193 & 1.19520 & $-2.21\%$
& $-0.03\%$ \\ 
\bottomrule &  &  &  &  &  &  &  & 
\end{tabular}
\captionof{table}{MFRT results when $D=1$ for the straight filament in
  the cylinder ($L=2$) versus that for the fragmented filament with
  $N$ small spheres of radius $\sigma $ on its axis of
  symmetry.} \label{tab:cyl}
\end{center}

In Table \ref{tab:cyl}, $a=1/(N\sigma )$ is half the distance between
the centers of adjacent spheres, rescaled by their common radius
$\sigma$, while $\eps_{\text{asy}}$ and $\eps_{\omega}$ are given by
\eqref{sec6:eps-sig} and \eqref{sec6:eps-omega}, respectively. To
determine $\eps_{\omega}$ we solved the inner problem
\eqref{sec6:new_inner} numerically to obtain $\omega(a)$.  In
addition, we have defined err$_{\text{asy}}\equiv\bar{u}_{F_{\sigma
}}/\bar{u}_{\Gam_{\eps},\text{asy}}-1$ and err$_{\omega}\equiv
\bar{u}_{F_{\sigma }}/\bar{u}_{\Gam_{\eps}, \omega}-1$. As expected,
we observe that err$_{\omega}$ is always very small, whereas
err$_{\text{asy}}$ is the largest when the spheres are nearly
touching, for which $a$ is not large (last row
for each $\sigma $).

We now repeat this computation replacing the small spheres by flat
disks as shown in Fig.~\ref{fig:all_frag}a (right). In this case
$C={2/\pi}$ so that from \eqref{sec6:eps-sig} we now have
$\eps_{\text{asy}}=\frac{4}{N}\exp \left( -\frac{\pi }{2N\sigma }%
-\gamma _{e}\right)$.  Unlike for the case of small spheres, the disks
never touch, so that $N$ can be taken arbitrarily large. A favorable
comparison between the asymptotic and  numerical results are shown in
Table \ref{tab:cyldisk}.

\begin{center}
\begin{tabular}{r@{\hspace{12pt}}c@{\hspace{20pt}}c@{\hspace{20pt}}c@{\hspace{12pt}}c@{\hspace{12pt}}c@{\hspace{12pt}}c@{\hspace{12pt}}c@{\hspace{12pt}}c}
\toprule$N$ & $a$ & $\varepsilon _{\text{asy}}$ & $\varepsilon _{\omega }$ & $%
\bar{u}_{F_{\sigma },\, \text{num}}$ & $\bar{u}_{\Gamma _{\varepsilon }%
\text{,asy}}$ & $\bar{u}_{\Gamma _{\varepsilon }}(\varepsilon _{\omega })$ & 
err$_{\text{asy}}$ & err$_{\omega}$ \\ 
\multicolumn{9}{l}{$\sigma =0.2$} \\ 
1 & 5.000 & $8.718\times 10^{-4}$ & $8.788\times 10^{-4}$ & 3.16545 & 3.14745
& 3.14350 & $+0.57\%$ & $+0.70\%$ \\ 
2 & 2.500 & $0.02212$ & $0.02281$ & 1.52867 & 1.53152 & 1.51635 & $-0.19\%$
& $+0.81\%$ \\ 
4 & 1.250 & $0.07881$ & $0.08776$ & 0.85510 & 0.90407 & 0.85196 & $-5.42\%$
& $+0.37\%$ \\ 
8 & 0.625 & $0.1052$ & $0.1438$ & 0.60892 & 0.76503 & 0.61763 & $-20.41\%$ & 
$-1.41\%$ \\ 
\multicolumn{9}{l}{$\sigma =0.1$} \\ 
1 & 10.000 & $3.385\times 10^{-7}$ & $3.391\times 10^{-7}$ & 7.08459 & 
7.07444 & 7.07344 & $+0.14\%$ & $+0.16\%$ \\ 
2 & 5.000 & $4.359\times 10^{-4}$ & $4.394\times 10^{-4}$ & 3.49342 & 3.49403
& 3.49007 & $-0.02\%$ & $+0.10\%$ \\ 
4 & 2.500 & $0.01106$ & $0.01140$ & 1.86521 & 1.87739 & 1.86218 & $-0.65\%$
& $+0.16\%$ \\ 
8 & 1.250 & $0.03941$ & $0.04388$ & 1.19152 & 1.24464 & 1.19139 & $-4.27\%$
& $+0.01\%$ \\ 
16 & 0.625 & $0.05259$ & $0.07191$ & 0.94474 & 1.10206 & 0.94863 & $-14.27\%$
& $-0.41\%$ \\ 
\multicolumn{9}{l}{$\sigma =0.05$} \\ 
1 & 20.000 & $5.101\times 10^{-14}$ & $5.103\times 10^{-14}$ & 14.93663 & 
14.92842 & 14.92817 & $+0.05\%$ & $+0.06\%$ \\ 
2 & 10.000 & $1.692\times 10^{-7}$ & $1.696\times 10^{-7}$ & 7.42086 & 
7.42102 & 7.42002 & $+0.00\%$ & $+0.01\%$ \\ 
4 & 5.000 & $2.180\times 10^{-4}$ & $2.197\times 10^{-4}$ & 3.83748 & 3.84060
& 3.83665 & $-0.08\%$ & $+0.02\%$ \\ 
8 & 2.500 & $0.005531$ & $0.005702$ & 2.20928 & 2.22376 & 2.20853 & $-0.65\%$
& $+0.03\%$ \\ 
16 & 1.250 & $0.01970$ & $0.02194$ & 1.53555 & 1.58931 & 1.53568 & $-3.38\%$
& $-0.01\%$ \\ 
32 & 0.625 & $0.02629$ & $0.03596$ & 1.28862 & 1.44555 & 1.29003 & $-10.86\%$
& $-0.11\%$ \\ 
\bottomrule &  &  &  &  &  &  &  & 
\end{tabular}
\captionof{table}{MFRT results when $D=1$ for the straight filament in the
  cylinder ($L=2$) versus that for the fragmented filament with $N$ flat
  disks of radius $\sigma$ perpendicular to its  axis of
  symmetry.} \label{tab:cyldisk}
\end{center}

From Table \ref{tab:cyldisk}, we observe, as expected, that
err$_{\text{asy}}$ is the largest for tightly packed disks where $a$
is not large and where the ratio $\varepsilon _{\omega}/\varepsilon
_{\text{asy}}$ is the largest.  Unlike for small spheres, the flat
disks can be taken arbitrary close, which is when that ratio grows the
most, and the effect is more pronounced than for spheres (e.g.
err$_{\text{asy}}=20\%$ for $N=8,\sigma =0.2)$. In contrast,
err$_{\omega}$, in which $\omega(a)$ must be computed numerically from
the new inner problem \eqref{sec6:new_inner}, always closely agrees
with the full numerics.

\textbf{Ring of Small Spheres.} Next, we consider an equatorial filament of
radius $r_{0}=0.6$ centered on the $z=0$ plane in the unit sphere and
we fragment it into $N$ small spheres of radius $\sigma=0.1$ as shown
in Fig.~\ref{fig:all_frag}b. To test our asymptotic theory, we take
$D=1$, $r_{0}=0.6$ and $\sigma =0.1.$ For each $N$ we use $\eps$ from
the large-$a$ asymptotics \eqref{sec6:eps-sig}, and estimate
$\bar{u}_{\Gamma _{\varepsilon}}$ both from the thin-filament
asymptotics \eqref{sec3:ubar} (with $d_c=1$) and from the numerical
solution of \eqref{sec6:MFPT}, which we label by $\bar{u}_{\Gamma
  _{\varepsilon}, \text{asy}}$ and $\bar{u}_{\Gamma _{\varepsilon
  }, \text{num}}$, respectively.  In Table \ref{tab:ring} we
compare these results with the corresponding full numerical results
$\bar{u}_{F_{\sigma}, \text{num}}$ for the fragmented filament
computed using the methodology in Appendix \ref{app:slab_num}.  In this
Table, $\omega_{\text{asy}}$ is given by \eqref{sec6:a_large} while
$\omega(a)$ is computed from the new inner problem
\eqref{sec6:new_inner} numerically using Appendix \ref{app:slab_num}.

\begin{center}
\begin{tabular}{r@{\hspace{8pt}}c@{\hspace{10pt}}c@{\hspace{10pt}}c@{\hspace{10pt}}c@{\hspace{10pt}}c@{\hspace{8pt}}c@{\hspace{8pt}}c@{\hspace{8pt}}c@{\hspace{4pt}}c@{\hspace{4pt}}c}
\toprule$N$ & $a$ & $\omega_{\text{asy}}$ & $\omega(a)$ & $\varepsilon _{%
\text{asy}}$ & $\bar{u}_{\Gamma _{\varepsilon },\, \text{asy}}$ & $\bar{u}%
_{\Gamma _{\varepsilon },\, \text{num}}$ & $\bar{u}_{F_{\sigma },\, \text{%
num}}$ & $\frac{\bar{u}_{\Gamma _{\varepsilon },\, \text{num}}}{\bar{u}%
_{\Gamma _{\varepsilon },\, \text{asy}}}-1$ & $\frac{\bar{u}_{F_{\sigma },\, 
\text{num}}}{\bar{u}_{\Gamma _{\varepsilon },\, \text{asy}}}-1$ & $\frac{%
\bar{u}_{F_{\sigma },\, \text{num}}}{\bar{u}_{\Gamma _{\varepsilon },\, 
\text{num}}}-1$ \\ 
\midrule5 & 3.770 & 1.6338 & 1.6337 & $0.01952$ & 0.51077 & 0.51115 & 0.51697
& $+0.07\%$ & $+1.21\%$ & $+1.14\%$ \\ 
10 & 1.885 & 0.4420 & 0.4382 & $0.06428$ & 0.30001 & 0.30430 & 0.30538 & $%
+1.43\%$ & $+1.79\%$ & $+0.36\%$ \\ 
15 & 1.257 & 0.2191 & 0.1932 & $0.08032$ & 0.26060 & 0.26712 & 0.26288 & $%
+2.50\%$ & $+0.87\%$ & $-1.59\%$ \\ 
16 & 1.178 & 0.2051 & 0.1708 & $0.08146$ & 0.25813 & 0.26481 & 0.25911 & $%
+2.59\%$ & $+0.38\%$ & $-2.15\%$ \\ 
17 & 1.109 & 0.1964 & 0.1524 & $0.08216$ & 0.25659 & 0.26338 & 0.25604 & $%
+2.65\%$ & $-0.21\%$ & $-2.79\%$ \\ 
18 & 1.047 & 0.1920 & 0.1369 & $0.08253$ & 0.25581 & 0.26265 & 0.25352 & $%
+2.68\%$ & $-0.89\%$ & $-3.48\%$ \\ 
\bottomrule &  &  &  &  &  &  &  &  &  & 
\end{tabular}
\captionof{table}{MFRT results when $D=1$ for the fragmentation of an
  equatorial filament of radius $r_0=0.6$ into $N$ small spheres of
  radius $\sigma =0.1$.} \label{tab:ring}
\end{center}

From Table \ref{tab:ring} we observe that the largest error of about
$3.5\%$ occurs when $a=1.047$, when the spheres almost touch each other
and the large-$a$ asymptotics \eqref{sec6:eps-sig} no longer
applies. We remark that upon replacing $\varepsilon _{\text{asy}}$ by
$\varepsilon _{\omega\left( a\right) }$ in this case reduces the error
to only $-0.05\%.$ We have also verified that $\bar{u} _{F_{\sigma
  }, \text{num}}$ can be in good agreement with the asymptotic
approximation $\bar{u}_{F_{\sigma }, \text{asy}}$ obtained by using
the Green's function for the sphere (see \eqref{ball:finalN} of
Proposition \ref{ball:prop}). For example, with $N=15$ spheres,
$\bar{u}_{F_{\sigma }, \text{asy}}=0.26050$, which is less than a
$1\%$ deviation from $\bar{u}_{F_{\sigma },\ \text{num}}$.

\textbf{Fragmented trefoil.} As a more challenging non-symmetric test
of our main result in Proposition \ref{mr:main}, we fragment the
Trefoil knot, with centerline defined by \eqref{eqn:wiggly_curve_conv}
with $p=3$, $q=2$, $r_{0}=0.6$ and $\rho _{0}=0.25$ as shown in
Fig. \ref{fig:all_frag}c (left). The filament centerline, of length
$L=8.974$, is fragmented into $N$ equally-spaced small spheres of
radius $\sigma =0.1,$ with the center of the first sphere at
$t=0$. When $D=1$, we tabulate our results for the global MFRT at
several values of $N$ in Table \ref{tab:trefoil}, following the
notation of Table \ref{tab:ring}. In this Table, $\bar{u}_{\Gamma
_{\varepsilon}, \text{asy}}$ is computed from the numerical solution
of the integral equation \eqref{prop:inteq}.

\begin{center}
\begin{tabular}{r@{\hspace{8pt}}c@{\hspace{10pt}}c@{\hspace{10pt}}c@{\hspace{10pt}}c@{\hspace{10pt}}c@{\hspace{8pt}}c@{\hspace{8pt}}c@{\hspace{8pt}}c@{\hspace{4pt}}c@{\hspace{4pt}}c}
\toprule$N$ & $a$ & $\omega_{\text{asy}}$ & $\omega(a)$ & $\varepsilon _{%
\text{asy}}$ & $\bar{u}_{\Gamma _{\varepsilon },\, \text{asy}}$ & $\bar{u}%
_{\Gamma _{\varepsilon },\, \text{num}}$ & $\bar{u}_{F_{\sigma },\, \text{%
num}}$ & $\frac{\bar{u}_{\Gamma _{\varepsilon },\, \text{num}}}{\bar{u}%
_{\Gamma _{\varepsilon },\, \text{asy}}}-1$ & $\frac{\bar{u}_{F_{\sigma },\,
\text{num}}}{\bar{u}_{\Gamma _{\varepsilon },\, \text{asy}}}-1$ & $\frac{%
\bar{u}_{F_{\sigma },\, \text{num}}}{\bar{u}_{\Gamma _{\varepsilon },\,
\text{num}}}-1$ \\
\midrule8 & 5.609 & 3.0753 & 3.0753 & $0.004617$ & 0.32252 & 0.32255 &
0.34297 & $+0.01\%$ & $+6.34\%$ & $+6.33\%$ \\
16 & 2.804 & 0.9641 & 0.9636 & $0.03813$ & 0.16472 & 0.16610 & 0.17049 & $%
+0.84\%$ & $+3.50\%$ & $+2.64\%$ \\
32 & 1.402 & 0.2551 & 0.2394 & $0.07749$ & 0.11121 & 0.11592 & 0.11493 & $%
+4.24\%$ & $+3.35\%$ & $-0.85\%$ \\
39 & 1.151 & 0.2012 & 0.1633 & $0.08177$ & 0.10711 & 0.11227 & 0.10946 & $%
+4.82\%$ & $+2.20\%$ & $-2.50\%$ \\
40 & 1.122 & 0.1978 & 0.1557 & $0.08205$ & 0.10684 & 0.11203 & 0.10894 & $%
+4.86\%$ & $+1.96\%$ & $-2.76\%$ \\
41 & 1.094 & 0.1951 & 0.1487 & $0.08227$ & 0.10664 & 0.11185 & 0.10847 & $%
+4.89\%$ & $+1.71\%$ & $-3.03\%$ \\
42 & 1.068 & 0.1932 & 0.1421 & $0.08244$ & 0.10649 & 0.11172 & 0.10803 & $%
+4.91\%$ & $+1.45\%$ & $-3.30\%$ \\
\bottomrule &  &  &  &  &  &  &  &  &  &
\end{tabular}
\captionof{table}{MFRT results when $D=1$ for the fragmentation of the Trefoil
  filament \eqref{eqn:wiggly_curve_conv} with $p=3$, $q=2$, $r_0=0.6$,
  and $\rho _{0}=0.25$ into $N$ small spheres of radius $\sigma
  =0.1$.} \label{tab:trefoil}
\end{center}

In Table \ref{tab:trefoil} we observe a close agreement
between the equivalent full filament results (both asymptotics and
numerics), and the fragmented filament, even with relatively few
spheres $(N=8).$ The error begins to grow for small trap
separation. For $N=42$ spheres, $a=1.068,$ and the asymptotic value 
$\omega \sim a-\log (4a)+\gamma _{e}=0.1932$ deviates significantly
from an exact value $\omega\left( 1.068\right) =0.1421$ computed from
a numerical solution of the new inner problem
\eqref{sec6:new_inner}. The latter value for $\omega$ changes the
equivalent filament cross-sectional radius to $\varepsilon =0.08675$,
and the error between the full computations drops from 3.30\% to only
0.25\%.

{\bf Note on numerics.} 
 We initially used the built-in MATLAB FEM solver, with the interface to it written by Claude AI (Opus version 5.5) for numerical work in this section. These numerics were then independently cross-validated by Claude's own numerical solvers that it custom-built for each of the problems, and for which it wrote its own 3D numerics without using MATLAB's FEM solver. 
 Claude's own solver was able to achieve a better convergence and accuracy than the standard MATLAB FEM solver running on a 16gig laptop; Tables \ref{tab:cyl} to \ref{tab:trefoil} therefore show the more precise values achieved by Claude's own routines. We verified that both methods (MATLAB FEM vs. Claude) agreed to within reported accuracy of MATLAB's FEM computation.
 For example, for the $N=40$ trefoil fully 3D numerical computation, Claude was able to compute up to 4 significant digits using its own 3D code (included in Table \ref{tab:trefoil}), while the MATLAB FEM package only managed up to 2 significant digits before running out of memory.

\section{Discussion}\label{sec7:discussion}

We have introduced a new asymptotic analysis for the mean
first-reaction time to thin filaments with reactive boundaries. Each
filament is assumed to have a centerline that is a simple closed
curve, and the collection of filaments are assumed to be
non-intersecting (with themselves and with each other). Our asymptotic
analysis reduces the full 3-D problem to one of determining the
line densities on each filament and the global MFRT in terms of the
solution to a coupled set of 1-D integral equations. Our theory
applies to filaments of general cross-sectional shape with the
geometry being encoded in the logarithmic reactive capacitance, which
can be computed numerically from a Steklov eigenfunction
representation. For various configurations of filaments, we validated
our asymptotic results using a simple trapezoidal approximation of the
integral operator. Our results confirm the expected accuracy of our
asymptotic theory as the cross-sectional radius decreases when
compared to full finite element simulations.  Extensions of the theory
to account for heterogeneous diffusivity, the splitting probabilities
and to determine the principal eigenvalue of the Laplacian were
discussed.  Finally, the relationship between the MFRT for filaments
and for the related problem of replacing the filament by spatially
localized targets equally-spaced on its centerline was analyzed.

We remark that in our initial formulation of the MFPT problem,
we have included the scaling factor $\eps$ to the Robin condition \eqref{intro:mfrt} to keep the constant reactivity parameter $b$ in the inner solution. Dividing by $\eps$, one recovers the conventional form of the Robin condition, $D\partial_n u + \K u = 0$, with the dimensional reactivity $\K_b \equiv b D/\eps$ that increases as $\eps\to 0$. In many applications, however, $\K_b$ remains fixed and does not scale with $\eps$. In this case, $b = \eps \K_b/D$ vanishes in the limit $\eps\to 0$ that significantly changes the asymptotic behavior of the MFRT. For instance, in the case of a single filament, substitution of the Taylor expansion \eqref{eq:dc_small} into \eqref{intro:prop:2term_chi} yields 
\begin{equation}
    \bar{u} \sim  \frac{|\Om|}{\K_b |\partial \Gamma_\eps|} + \frac{|\Om|}{D |\Gam|}\left(\frac{\log(1/\eps)}{|\partial\B|} + \frac{C_1}{2\pi} + \frac{1}{|\Gam|}  \int_{\Gam} \int_{\Gam}
    R\left(\X(s); \X(s^{\prime}) \right) \dd s \dd s^{\prime} +
    \frac{1}{4\pi|\Gam|} \int_{\Gam} {\mathcal E}(s) \dd s +
    {\mathcal O}\left(\frac{\eps}{\log(\eps)}\right)  \right). 
\end{equation}
The first term, which gives the leading-order contribution $1/\eps$, corresponds to the standard kinetics-limited regime; expectedly, the MFRT is inversely proportional to the surface area $|\partial\Gamma_\eps|$ of the reactive boundary and to the reactivity $\K_b$, whereas it does not depend on the diffusion coefficient $D$. In turn, the logarithmic term gives the subleading contribution. The qualitative change in the scaling of the MFRT, from $\log(1/\eps)$ to $1/\eps$, was discussed previously for circular targets in \cite{Grebenkov17} (see also \cite{GrebWard2D}). The logarithmic reactive capacitance $d_c(b)$ encompasses various scenarios (when $b$ is fixed, vanishing, or infinite) and quantitatively describes transitions between them in \eqref{intro:prop:2term_chi} and related expressions.

There are many avenues for future study which arise from this work. We
have demonstrated our theory on closed filaments for which the
trapezoidal rule provides accurate quadrature. However, typical
biological filaments and fibers are either open at both ends or
tethered at one end to the boundary and free at the other. We
hypothesize that the results contained in the log-summed integral
equation system still hold to leading order. However, the effects of
open ends will arise in higher-order corrections through an additional
inner problem centered at each tip, which needs to be
analyzed. Additional high-order effects will arise at ${\mathcal
O}(\eps)$ due to the centerline curvature as seen from the inner
problem \eqref{sec1:lapl2}. Related rigorous work on finite-end
filament conditions in slender-body theory has recently been
considered in \cite{MOS2019}. The details of obtaining these
higher-order corrections is left to future work, as well as
generalizations to include drift fields and directed transport
inside the filaments, among others.

\paragraph{Acknowledgments}

MJW and TK are grateful for the support of the NSERC Discovery Grant Program
(Canada). AEL acknowledges support from NSF grant DMS-2052636. This
research was supported in part by grants from the NSF (DMS-2235451)
and Simons Foundation (MPS-NITMB-00005320) to the NSF-Simons National
Institute for Theory and Mathematics in Biology (NITMB).
D.S.G. acknowledges the Simons Foundation for supporting his
sabbatical sojourn in 2024 at the CRM, University of Montr\'eal,
Canada, as well as the Alexander von Humboldt Foundation for support
within a Bessel Prize award. We gratefully acknowledge the support of the University of Notre Dame’s Kylemore Abbey Global Centre.

\begin{appendices}
\renewcommand{\theequation}{\Alph{section}.\arabic{equation}}
\setcounter{equation}{0}

\section{The inner problem}\label{app:inner}

We first derive \eqref{sec1:lapl1} in terms of the tubular
coordinates.  The position vector of a point $\x$ in the tubular
neighbourhood is written as $\x(s,\eta_1,\eta_2)=\X(s) + \eta_1 \MI(s)
+ \eta_2 \MII(s)$. Upon taking partial derivatives and using
$\MI^{\prime}(s)=-\kappa_1(s) \T(s)$ and
$\MII^{\prime}(s)=-\kappa_2(s) \T(s)$, where $\T(s)$ is the unit
tangent and $\kappa_1(s)$ and $\kappa_2(s)$ are curvatures along each
respective normal direction, we get $\x_s=\left(1-\kappa_1 \eta_1 -
\kappa_2 \eta_2\right)\T(s)$, $\x_{\eta_1}=\MI(s)$, and
$\x_{\eta_2}=\MII(s)$. In this way, we obtain the
scale factors for the transformation as
\begin{equation*}
  h_{s} = |\x_s|=1-\kappa_1\eta_1-\kappa_2\eta_2\,, \quad
  h_{\eta_1}=|\x_{\eta_1}|=1 \,, \quad  h_{\eta_2}=|\x_{\eta_2}| = 1 \,.
\end{equation*}
A sufficient condition to ensure that $h_{s}>0$ along the filament is that
$\kappa_1\eta_1+\kappa_2\eta_2<1$ for $0\leq s\leq L$. By using the Cauchy-Schwartz inequality, a sufficient condition for this inequality to hold is
that
$\sqrt{\eta_1^2 + \eta_2^2} < {1/\kappa(s)}$ for $0\le s\le L$, where 
$\kappa(s)\equiv \sqrt{\kappa_1^2+\kappa_2^2} = |\X^{\prime\prime}(s)|$ is the curvature of the filament centerline.

By using the general expression for the Laplacian of an arbitrary
function $\Phi$ in an orthogonal coordinate system, in which we set
$h_{\eta_1}=h_{\eta_2}=1$, we get
\begin{equation*}
  \Delta_{\x} \Phi = \frac{1}{h_s}
  \left[ \partial_{s} \left( \frac{1}{h_{s}}  \partial_{s} \Phi\right) +
    \partial_{\eta_2} \left( h_s \partial_{\eta_2} \Phi\right) +
   \partial_{\eta_1} \left( h_s \partial_{\eta_1} \Phi\right) \right]\,,
\end{equation*}
which readily yields  \eqref{sec1:lapl1}.

\subsection{The logarithmic reactive capacitance}\label{app:steklov}

The logarithmic reactive capacitance $d_c(b)$ is defined by the
far-field behavior of the solution $v_c$ of the boundary value problem
\eqref{sec2:in_prob}. When the cross-section $\B$ is the unit disk,
this problem admits an explicit solution, $v_c = \log|\y| + 1/b$, from
which $\log[d_c(b)] = -1/b$.  For other domains, the function $d_c(b)$
can be determined in terms of eigenfunctions of the exterior Steklov
problem.  In this Appendix, we briefly describe this spectral
representation by generalizing the approach from
\cite{Chaigneau24,Grebenkov25,Grebenkov25b,GrebWard2D}.

For a given bounded set $\B \subset \R^2$ with a connected Lipschitz
boundary $\pa$, we search for eigenvalues $\mu_k$ and eigenfunctions
$\Psi_k(\y)$ of the exterior Steklov problem
\begin{equation}
\Delta_{\y} \Psi_k = 0  \,; \quad \y \in \R^2\setminus \B\,; \qquad
\partial_n \Psi_k = \mu_k \Psi_k\,, \quad \y \in \pa\,; \qquad
\Psi_k = {\mathcal O}(1) \quad \mbox{as} \quad |\y|\to \infty\,.
\end{equation}
This eigenvalue problem has a discrete spectrum \cite{Bundrock25},
i.e, a countable set of eigenvalues $\mu_k$ which can be enumerated by
the index $k = 0,1,\ldots$ in an increasing order, $0 = \mu_0 < \mu_1
\leq \mu_2 \ldots \nearrow +\infty$.  In turn, the traces of eigenfunctions
$\Psi_k$ on $\pa$, $(\Psi_k)|_{\pa}$, form a complete orthonormal
basis in the space $L^2(\pa)$.  Since $v = v_c - v_{c,b=\infty}$ is a
harmonic function bounded at infinity, one can expand it over the
Steklov eigenfunctions
\begin{equation}  \label{eq:vc_Psi}
v_c(\y) = v_{c,b=\infty}(\y) + \sum\limits_{k=0}^\infty c_k \Psi_k(\y)\,,
\end{equation}
where the unknown coefficients $c_k$ are obtained from the boundary
condition as 
\begin{equation}
  0 = \partial_n v_c + b v_c = \partial_n v_{c,b=\infty} +
  \sum\limits_{k=0}^\infty c_k (\mu_k + b) \Psi_k(\y)\,, \quad \y\in \pa \,.
\end{equation} 
Multiplying this relation by $\Psi_j$ and integrating over $\pa$, we
get from the orthonormality of $\{(\Psi_k)|_{\pa}\}$ that
\begin{equation}\label{app:num}
  c_j = - \frac{1}{\mu_j + b} \bigl(\partial_n v_{c,b=\infty}\,,
  \Psi_j\bigr)_{L^2(\pa)} \,.
\end{equation}
Upon applying Green's second identity to the problems for $\Psi_k$ and
$v_{c,b=\infty}$, we can represent the scalar product in
the numerator of \eqref{app:num} as $-2\pi \Psi_j(\infty)$
so that
\begin{equation}
c_j = \frac{2\pi \Psi_j(\infty)}{\mu_j + b} \,.
\end{equation}
 Since $\Psi_k$ is harmonic and bounded at infinity, its gradient
decays as ${\mathcal O}\left(1/|\y|^2\right)$, which ensures the
applicability of the Green's second identity as $\lim_{R\to\infty}
R\log(R)\partial_{R} \Psi_k=0$, with $R=|\y|$.

The logarithmic reactive capacitance is obtained by
evaluating the limiting behavior of \eqref{eq:vc_Psi} as $\y\to\infty$:
\begin{equation*}
  -\log[d_c(b)] = -\log[d_c(\infty)] + \sum\limits_{k=0}^\infty c_k
  \Psi_k(\infty)\,,
\end{equation*}
from which
\begin{equation}  \label{eq:dc_spectral}
  -\log[d_c(b)] = -\log[d_c(\infty)] + 2\pi
  \sum\limits_{k=0}^\infty \frac{\Psi_k^2(\infty)}{\mu_k + b} \,.
\end{equation}
The spectral expansion \eqref{eq:dc_spectral} is the main result of
this Appendix.  For instance, when $\B$ is the unit disk, all
$\Psi_k(\y)$ decay at infinity, except for $\Psi_0(\y) =
1/\sqrt{2\pi}$ so that the above sum has the single term $k =0$.
Since $d_c(\infty) = 1$ and $\mu_0 = 0$, we retrieve the result
$\log[d_c(b)] = -1/b$.  For a general cross-section $\B$, the
eigenvalues $\mu_k$ and the coefficients $\Psi_k^2(\infty)$ can be
found numerically (see, e.g., \cite{Chaigneau24,Grebenkov25}).  In
practice, the computation of a limited number of eigenmodes can be
sufficient for an accurate computation of $d_c(b)$.  The exterior
Steklov problem also admits a natural extension to the degenerate case
in which $\B$ is an interval (see below), allowing one to describe the
limiting case of highly anisotropic filament cross-sections.

The expansion \eqref{eq:dc_spectral} provides the functional
dependence of $d_c(b)$ on the reactivity $b$.  In particular, we
conclude that $-\log[d_c(b)]$ is a monotonically decreasing function
of $b$: it decreases from $+\infty$ at $b = 0$ up to
$-\log[d_c(\infty)]$ at $b = \infty$. The small-$b$ asymptotic behavior of
$d_c(b)$ can be obtained by a Taylor expansion as
\begin{equation}  \label{eq:dc_small}
-\log[d_c(b)] = \frac{C_0}{b} + \sum\limits_{n=0}^\infty C_{n+1} (-b)^n \,,
\end{equation}
where we use $\Psi_0=|\pa|^{-1/2}$ and $\mu_0=0$ to get
\begin{equation}
C_0 = \frac{2\pi}{|\pa|} \,, \qquad 
C_1 = -\log[d_c(\infty)] + 2\pi \sum\limits_{k=1}^\infty \frac{\Psi_k^2(\infty)}
{\mu_k} \,, \qquad
C_{n} = 2\pi \sum\limits_{k=1}^\infty \frac{\Psi_k^2(\infty)}{\mu_k^n}
\quad (n=2,3,\ldots)\,.
\end{equation}

We now derive alternative representations of the spectral sums that
define $C_1$ and $C_2$. To do so, we first introduce a new function $w_b$
defined by $w_b = v_c - 2\pi/(b|\pa|)$, which satisfies
\begin{equation}  \label{eq:auxil1}
\Delta_{\y} w_b = 0\,, \quad \y\in\R^2\setminus \B\,; \qquad
\partial_n w_b + b w_b = - \frac{2\pi}{|\pa|}\,, \quad \y\in\pa\,,
\end{equation}
with $w_b(\y) = \log|\y| - \log[d_c(b)] - \tfrac{2\pi}{b|\pa|} + o(1)$
as $|\y|\to\infty$. Upon applying the divergence theorem to
\eqref{eq:auxil1}, we find that $\int\limits_{\pa} w_b \dd s= 0$ holds
$\forall~ b > 0$.

In the limit $b\to 0$, we expand $w_b$ as
\begin{equation}\label{eq:wb_exp}
  w_b = w + b w_{2} + {\mathcal O}(b^2) \,.
\end{equation}
Upon substituting \eqref{eq:wb_exp} into both \eqref{eq:auxil1} and the
condition $\int\limits_{\pa} w_b \dd s=0$, and recalling
the expansion of $-\log[d_c(b)]$ in \eqref{eq:dc_small}, we collect
powers of $b$ to obtain the following problems for $w$ and $w_2$:
\begin{equation}\label{eq:w_def}
    \Delta_{\y} w=0 \,, \quad \y\in \R^2\setminus \B \,; \qquad
    \partial_{n} w  = -\frac{2\pi}{|\pa|} \,, \quad \y\in\pa \,; \qquad
    \int_{\pa} w \dd s =0 \,,
\end{equation}
and
\begin{equation}\label{eq:w2prob}
    \Delta_{\y} w_2 =0 \,, \quad \y\in \R^2\setminus \B \,; \qquad
   \partial_{n} w_2 = -w \,, \quad \y\in\pa \,; \qquad
    \int_{\pa} w_2 \dd s = 0 \,.
\end{equation}
The two constants $C_1$ and $C_2$ are calculated from the limiting behaviors:
\begin{equation}\label{eq:C1C2}
  C_1=\lim_{|\y|\to\infty} \left( w-
\log|\y|\right) \,, \qquad C_2=-\lim_{|\y|\to\infty} w_{2}\,.
\end{equation}
Upon using Green's second identity between $w$ and $w_2$, we readily find
\begin{equation}\label{eq:c2prob}
  C_2 = - w_{2}(\infty) = \frac{1}{2\pi} \int_{\pa} w^{2} \dd s \,.
\end{equation}
We remark that $C_2$ can also be obtained from the Steklov expansion, since
from \eqref{eq:vc_Psi} we have that
\begin{equation}
  w|_{\pa} = 2\pi \sum\limits_{k=1}^\infty
  \frac{\Psi_k(\infty) \Psi_k(\y)}{\mu_k}
  \,,
\end{equation}
from which we conclude upon squaring and using orthonormality that
\begin{equation}  \label{eq:C2}
C_2 = 2\pi \sum\limits_{k=1}^\infty \frac{\Psi_k^2(\infty)}{\mu_k^2} 
= \frac{1}{2\pi} \int\limits_{\pa} w^2 \dd s \,.
\end{equation}

To determine $C_1$ and $C_2$, the key step is to solve
\eqref{eq:w_def} by using conformal maps after identifying
$\R^2\setminus \B$ as the region $\C \setminus \B$ in the complex
plane. To do so, we let $\xi=y_1+iy_2$ be any point in $\C \setminus
\B$, and $\D\equiv \lbrace{z\in \C \, | \, |z|<1\rbrace}$ be the unit
disk.  For a simply connected cross-section $\B$ with sufficiently
regular boundary, let $\xi=f(z)$ be a conformal mapping from the
exterior of the unit disk,  $\C\setminus \D$, onto $\C \setminus
\B$. As $|z|\to\infty$, it can be developed in a Laurent series as
\begin{equation}
  \xi = f(z) \sim a_0 z + b_0 + {b_1/z} + {b_2/z^2} + \ldots \,,
\end{equation}
so that $\log|\xi|\sim \log|z| + \log{a_0} + o(1)$ as $z\to
\infty$. Since the exact solution for $V_c(z)\equiv v_c(f(z))$ when $b=\infty$
is $\log|z|$, we have $v_c\sim \log|\xi|-\log(a_0)$, which identifies
that $a_0=d(\infty)$, i.e. $a_0$ is the simply the logarithmic capacity of
$\pa$.

Moreover, setting $W(z) = w(f(z))$, we obtain in terms of the Jacobian
$q(\theta) \equiv |f^{\prime}(e^{i\theta})|$ of the map on $\partial \D$,
that \eqref{eq:w_def} transforms to the following problem for
$W$:
\begin{equation}\label{eq:Wmap}
    \Delta_{z} W=0 \,, \quad z \in \C \setminus \D \,; \qquad
    \partial_{r} W = \frac{2\pi}{|\pa|} q(\theta) \,, \quad \mbox{on}
    \quad r=|z|=1 \,; \qquad
    \int_{0}^{2\pi} W(e^{i\theta}) q(\theta) \dd \theta = 0  \,.
\end{equation}
In terms of this solution, we get from \eqref{eq:C1C2} together with
the large $z$ behavior of the map that
\begin{equation}\label{eq:c1W}
    \qquad C_1=-\log [d_c(\infty)] + \lim_{|z|\to\infty} \left( W(z)-
    \log|z|\right) \,.
\end{equation}

Next, since $\int_{0}^{2\pi} q(\theta)\, d\theta=|\pa|$, we can represent the
solution to the exterior Neumann problem \eqref{eq:Wmap} in $|z|>1$ in
terms of a constant $A$ as
\begin{equation}
  W(z) = \log |z| + A + \frac{2}{|\pa|} \int\limits_0^{2\pi}
  q(\theta^{\prime})\, \log\left|1 - \frac{e^{i\theta^{\prime}}}{z}\right|
  \dd\theta^{\prime} \,.
\end{equation}
On the circle $|z|=1$ we have
\begin{equation} \label{eq:Wtheta}
  W(e^{i\theta}) = A + \frac{2}{|\pa|} \int\limits_0^{2\pi}
  q(\theta^{\prime}) \, \log|e^{i\theta} - e^{i\theta^{\prime}}|
  \dd\theta^{\prime} \,,
\end{equation}
and so the zero-integral constraint in \eqref{eq:Wmap} determines $A$ as
\begin{equation}
  A = - \frac{2}{|\pa|^2} \int\limits_0^{2\pi} \int\limits_0^{2\pi} \,
  q(\theta) \, q(\theta^{\prime}) \, \log|e^{i\theta} - e^{i\theta^{\prime}}|
  \dd \theta \dd\theta^{\prime} \,.
\end{equation}
Finally, since $W(z)\sim \log|z|+A + o(1)$ as $|z|\to\infty$, we identify
$C_1$ from \eqref{eq:c1W} as
\begin{equation}  \label{eq:C1_conformal}
  C_1 = - \log[d_c(\infty)] - \frac{2}{|\pa|^2} \int\limits_0^{2\pi}
  \int\limits_0^{2\pi} q(\theta) \, q(\theta^{\prime})
  \, \log|e^{i\theta} - e^{i\theta^{\prime}}| \dd\theta \dd\theta^{\prime}\,.
\end{equation}
Upon using the inverse conformal map $f^{-1}$, we can write $C_1$ in an
alternative form as
\begin{equation}
  C_1 = - \log[d_c(\infty)] - \frac{2}{|\pa|^2} \int\limits_{\pa}
  \int\limits_{\pa} \log\bigl|f^{-1}(\y_1) - f^{-1}(\y_2)\bigr|
  \dd \y_1 \dd\y_2 \,.
\end{equation}
 
Next, by using the identity
\begin{equation}
  \log|e^{i\theta} - e^{i\theta^{\prime}}| = -
  \sum\limits_{n=1}^\infty \frac{\cos(n(\theta - \theta^{\prime}))}{n}
  = - \sum\limits_{n=1}^\infty \frac{\cos(n\theta) \cos(n\theta^{\prime})
    + \sin(n\theta) \sin(n\theta^{\prime})}{n} \,,
\end{equation}
as readily derived from the Taylor expansion of the principal branch
of $\log(1-z)$, we can obtain a further expression for $C_1$.  If
$q(\theta)$ is even, as in the examples considered below, the sine
contributions vanish so that
\begin{equation}  \label{eq:C1_series}
  C_1 = - \log[d_c(\infty)] + \frac{2\pi^2}{|\pa|^2}
  \sum\limits_{n=1}^\infty \frac{q_n^2}{n} \,, \qquad
  \mbox{where} \qquad q_n \equiv
  \frac{1}{\pi} \int\limits_0^{2\pi}  q(\theta) \cos(n\theta) \dd\theta \,.
\end{equation}

Similarly, we can rewrite \eqref{eq:C2} by using the conformal map as
\begin{equation}  \label{eq:C2b}
  C_2 = \frac{1}{2\pi} \int\limits_{0}^{2\pi} q(\theta) \,
  [W(e^{i\theta})]^2  \dd\theta \,,
\end{equation}
where $W(e^{i\theta})$ is given in \eqref{eq:Wtheta}.  Under the
same symmetry assumption on $q(\theta)$, $C_2$ can be expressed in
terms of the  Fourier cosine coefficients as
\begin{equation}
  C_2 = \frac{\pi^2}{|\pa|^2} \sum_{m=1}^{\infty} \sum_{n=1}^{\infty}
  \frac{q_m q_n}{mn} (q_{m+n} + q_{|m-n|}) - \frac{2\pi^3}{|\pa|^3}
\biggl(\sum\limits_{n=1}^\infty \frac{q_n^2}{n}\biggr)^2 \,.
\end{equation}

As an example, let $\B$ be the interval $\B = (-1,1)$.  The Joukowski
function $f(z) = \tfrac12 (z + 1/z)$ maps $\C\setminus \D$ onto
$\C\setminus (-1,1)$.  As a consequence, we have
$q(\theta)=|f^{\prime}(e^{i\theta})|=\tfrac{1}{2}|1-e^{-2i\theta}|=|\sin\theta|$,
so that with $d_c(\infty)={1/2}$ we obtain from \eqref{eq:C1_conformal}
that
\begin{equation}
  C_1 = -\log(1/2) -\frac{2}{4^2} \int\limits_0^{2\pi}
  \int\limits_0^{2\pi} |\sin(\theta)| \, |\sin(\theta^{\prime})|\,
  \log|e^{i\theta} - e^{i\theta^{\prime}}| \dd\theta
  \dd\theta^{\prime} = \frac{3}{2} -\log 2 \,,
\end{equation}
in agreement with the result (3.9) in \cite{GrebWard2D} (note
that here we used $|\pa| = 4$ because two ``sides'' of the interval do
contribute).  A brute-force computation of the integral \eqref{eq:C2b}
yields $C_2 = (21 - 2\pi^2)/(18\pi) \approx 0.0223$, as derived in
\cite{GrebWard2D}.

For an ellipse with semi-axes $a_1$ and $a_2$ (with $a_1\leq a_2$),
we have $d_c(\infty) = (a_1+a_2)/2$.  The conformal map is $f(z) = a
(z + \beta/z)$, where $a \equiv (a_1+a_2)/2$ and $\beta \equiv
(a_2-a_1)/(a_2+a_1)$. Therefore, we get $f(e^{i\theta}) = a_2 \cos\theta +
ia_1 \sin\theta$, from which $q(\theta) = \sqrt{a_2^2 \sin^2\theta +
  a_1^2 \cos^2\theta}$.  The perimeter is $|\pa| = 4a_2 E(e)$, with $e
= \sqrt{1 - (a_1/a_2)^2}$, where $E(e)$ denotes the complete elliptic
integral of the second kind.  These expressions fully determine $C_1$,
either in terms of the double integral in \eqref{eq:C1_conformal}, or
in terms of \eqref{eq:C1_series}. Numerical results for $C_1$ and
$C_2$ for an ellipse are shown in Fig.~\ref{fig:C1_ellipse}.

\begin{figure}
\begin{center}
\includegraphics[width=80mm]{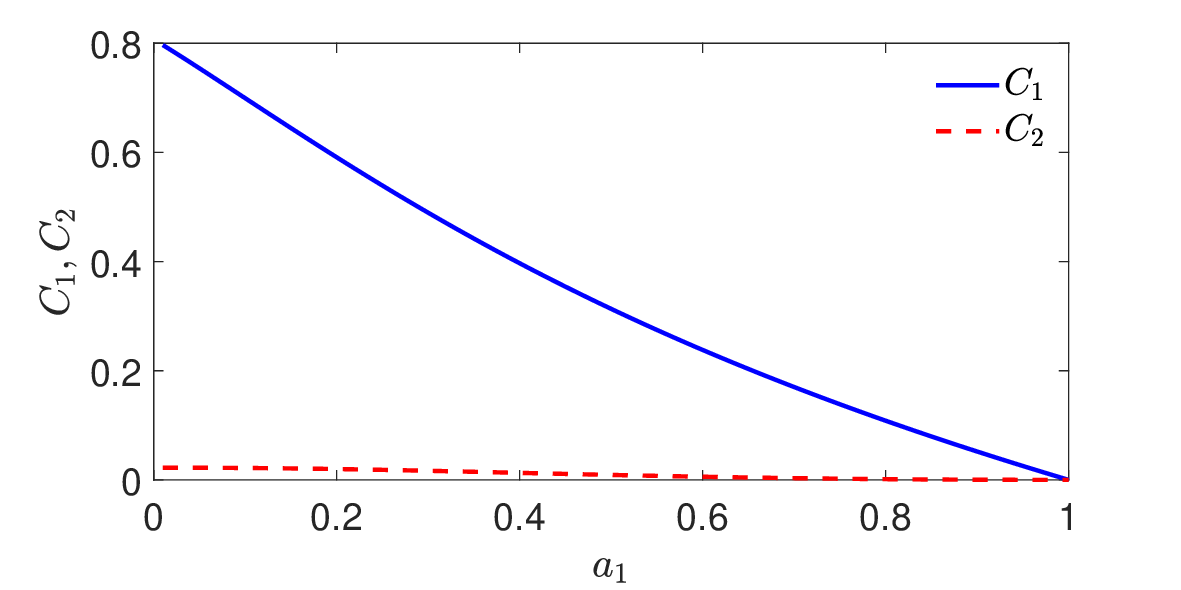}
\end{center}
\caption{
  The coefficients $C_1$ and $C_2$ as a function of $a_1$ for the
exterior of an ellipse with semi-axes $a_1$ and $a_2 = 1$.}
\label{fig:C1_ellipse}
\end{figure}

\section{Intrinsic self-energy of a simple closed curve}\label{app:curve}

Let $\Gam$ be a simple closed curve in ${\mathbb R}^3$ parameterized
by $\X(s^{\prime})$ where $s^{\prime}$ is arclength along $\Gam$. For an
arbitrary target point $\X(s)\in \Gam$, let $\X_{\eta}(s)$ denote any point
at a distance $\eta>0$ from $\X(s)$ where without loss of generality we
assume that $s$ is not an endpoint, i.e. that $0<s<L$. In this appendix we
show that the intrinsic self-energy of a curve, defined by the limiting
process
\begin{equation}\label{appa:intrinsic}
  {\mathcal E}(s) \equiv \lim_{\eta\to 0^{+}} \left( \int_{\Gam}
  \frac{\dd s^{\prime}}{
    |\X_{\eta}(s)-\X(s^{\prime})|}  + 2\log\eta \right) \,,
\end{equation}
has a finite value. For the special case where $\Gam$ is a circular ring
of radius $r$, we will calculate ${\mathcal E}(s)$ analytically.
Moreover, we will provide a numerical approach to compute ${\mathcal E}(s)$
for arbitrary simple closed curves.

We first show that the integral $I$ defined by
\begin{equation}\label{appa:int_zero}
  I \equiv \int_{\Gam} \frac{\dd s^{\prime}}{|\X_{\eta}(s)-\X(s^{\prime})|}\,,
\end{equation}
has a logarithmic singularity as $\eta\to 0^{+}$. In a neighbourhood
of the target point $\X(s)\in \Gam$, we let $s^{\prime}=s+\sigma$
where $\sigma\ll 1$.  Since $\Gam$ is smooth and $s^{\prime}$ is
arclength on $\Gam$ we have $\X(s^{\prime}) = \X(s) + \sigma
\X^{\prime}(s)+ {\mathcal O}(\sigma^2)$ as $\sigma\to 0$, where
$|\X^{\prime}(s)|=1$. As a result, by Pythagoras we conclude that
$|\X_{\eta}(s)-\X(s^{\prime})|\approx \sqrt{\sigma^2+\eta^2}$ for
$|\sigma|=|s^{\prime}-s| \ll 1$ and $\eta\ll 1$. Next, we decompose
$I$ into a local integral and a global part as
\begin{equation}\label{appa:int_zero_1}
  I \approx \underbrace{\int_{s-\sigma}^{s+\sigma} \frac{ \dd
      s^{\prime}}{\sqrt{\eta^2+ (s^{\prime}-s)^2}}}_{\mbox{local part}} +
    \int_{0}^{s-\sigma} \frac{\dd
      s^{\prime}}{|\X_{\eta}(s)-\X(s^{\prime})|} + \int_{s+\sigma}^{L}
    \frac{\dd s^{\prime}}{|\X_{\eta}(s)-\X(s^{\prime})|}\,.
\end{equation}
The local integral in \eqref{appa:int_zero_1} is calculated exactly as
\begin{equation}\label{appa:iloc}
  I_{loc} \equiv \int_{-\sigma}^{\sigma} \frac{ \dd \xi}{\sqrt{\eta^2+
    \xi^2}} = -2\log\eta + 2 \log\left[\sigma + \sqrt{\sigma^2+\eta^2}
    \right] \,.
\end{equation}
Finally, with the ordering $\eta\ll \sigma\ll 1$, we let $\eta\to
0^{+}$ to obtain that $I_{loc} \sim -2\log\eta $ as $\eta\to 0$. We
conclude that the leading-order singular behavior of $I$ is universal
and independent of the shape of $\Gam$. The remaining
$\eta$-independent constant, augmented by the contribution of $\Gam$
to $I$ away from the target point $\X(s)$ is precisely the finite part
in \eqref{appa:intrinsic} that defines the intrinsic self-energy
${\mathcal E}(s)$ of $\Gam$.

\subsection{Intrinsic self-energy of a ring}\label{appa:der_loop}

We now compute ${\mathcal E}(s)$ when $\Gam$ is a circular ring of
radius $r$ parameterized as
\begin{equation}\label{appa:loop}
  \X(s^{\prime}) = r \left( \cos\left({s^{\prime}/r}\right),
  \sin\left({s^{\prime}/r}\right), 0\right)^{T} \,, \quad 0\leq s^{\prime}
  \leq 2\pi r \,.
\end{equation}
By symmetry it follows that ${\mathcal E}(s)={\mathcal E}_0$,
independent of the target point $s$ on the ring. Therefore, without
loss of generality we can take $\X_{\eta}(s)=(r+\eta,0,0)^{T}$ in
\eqref{appa:intrinsic}, and for this point we calculate that
\begin{equation}\label{appa:dist}
  | \X_{\eta}(s)-\X(s^{\prime})|^2 = \left(r+\eta-r\cos\left({s^{\prime}/r}\right)
  \right)^{2} + r^2 \sin^{2}\left({s^{\prime}/r}\right) = \eta^{2} +
   4 r (r+\eta) \sin^{2}\left({s^{\prime}/(2r)}\right) \,.
\end{equation}
Upon introducing $\theta$ by $s^{\prime}=r\theta$, we
obtain from \eqref{appa:dist} that
\begin{equation}\label{appa:dist_1}
  I \equiv \int_{\Gam} \frac{\dd s^{\prime}}{|\X_{\eta}(s)-\X(s^{\prime})|} 
  = \int_{0}^{2\pi} \frac{ r \dd\theta}{\left[\eta^2 + 4r(r+\eta)\sin^{2}
      \left({\theta/2}\right)\right]^{1/2}} = 4r \int_{0}^{\pi/2}
    \frac{\dd\phi}{\left[\eta^2 + 4r(r+\eta)\sin^{2}\phi \right]^{1/2}}\,.
\end{equation}
By introducing $\psi={\pi/2}-\phi$ and using $\cos^{2}\psi=1-
\sin^{2}\psi$, we identify that
\begin{equation}\label{appa:I}
  I = \frac{4r}{2r+\eta} \int_{0}^{\pi/2} \frac{\dd\psi}
  {\sqrt{1-k^2\sin^{2}\psi}} = \frac{4r}{2r+\eta} K(k)
  \quad \mbox{where} \quad k \equiv \frac{2\sqrt{r(r+\eta)}}{2r+\eta}\,,
\end{equation}
and $K(k)$ is the complete elliptic integral of the first kind. In this
way, we evaluate from \eqref{appa:intrinsic} the limit
\begin{equation}\label{appa:Ilim}
  {\mathcal E}(s)= {\mathcal E}_{0} =
  \lim_{\eta\to 0} \left( \frac{4r}{2r+\eta} K(k) + 2\log\eta \right) \,,
\end{equation}
where $k$ is defined in \eqref{appa:I}.

We observe that $k\to 1^{-}$ as $\eta\to 0^{+}$. For
$k\to 1^{-}$, we use the limiting asymptotics $K(k)\sim
\log\left({4/k^{\prime}}\right)$ with $k^{\prime}\equiv
\sqrt{1-k^2}$. By using \eqref{appa:I} for $k$ we obtain that
$k^{\prime}={\eta/(2r+\eta)}$. In this way, we derive that $K(k)\sim
-\log\eta + \log(8r) + o(1)$ as $\eta\to 0^{+}$. Therefore, from
\eqref{appa:Ilim} we conclude that
\begin{equation}\label{appa:loop_res}
  {\mathcal E}_0 =\lim_{\eta\to 0^{+}}
  \left( 2 \left[-\log\eta + \log(8r) + o(1)
  \right] + 2\log\eta\right) = 2 \log(8r) \,,
\end{equation}
which is the intrinsic self-energy of a loop of radius $r$.

\subsection{Numerical method: Arclength parameterization}\label{appa:num_intrinsic}

Let us consider a disk of radius $\rho=L/(2\pi)$ and perimeter $L$
equivalent to that of the filament. We will use the solution of this
case to obtain an integral for $\mathcal{E}(s)$ with a removable
singularity. Consider two points of arclengths $s$ and $s^{\prime}$
for which the chord between them is of length
\begin{equation}\label{eq:d}
  d(s-s^{\prime})=2\rho\left|\sin\left(\frac{s-s^{\prime}}{2\rho}\right)\right|=
  \frac{L}{\pi}\left|\sin\left(\frac{\pi(s-s^{\prime})}{L}\right)\right|\,.
\end{equation}
This expression is $L$-periodic and even in $s-s^{\prime}$ and vanishes at
$s^{\prime}=s$ with local behavior
\begin{equation}\label{eq:dexp}
  d(s-s^{\prime})=|s-s^{\prime}|-\frac{\pi^2}{6L^2}|s-s^{\prime}|^3+
  \mathcal O(|s-s^{\prime}|^5)\,.
\end{equation}
Upon adding and subtracting the regularized kernel to
\eqref{appa:intrinsic} we have that
\begin{equation}\label{eq:split}
  \mathcal E(s)=\underbrace{\lim_{\eta\to 0^{+}}
    \int_\Gamma\left[\frac{1}{|\X_\eta(s)-\X(s^{\prime})|}
      -\frac{1}{\sqrt{\eta^2+\left[d(s-s^{\prime})\right]^2}}\right]
    \dd s^{\prime}}_{\text{(I)}}
  +\underbrace{\lim_{\eta\to0}\left(\int_\Gamma\frac{\dd s^{\prime}}
    {\sqrt{\eta^2+\left[d(s-s^{\prime})\right]^2}}
    +2\log\eta\right)}_{\text{(II)}} \,.
\end{equation}
Using the local behavior \eqref{eq:dexp} and \eqref{appa:dist}, we
observe that the first integral is de-singularized and we can pass to
the limit as $\eta\to0^{+}$. Hence, we obtain for the first integral that
\begin{equation}\label{eq:I}
  \text{(I)}=\int_\Gamma\left[\frac{1}{|\X(s)-\X(s^{\prime})|}-
    \frac{1}{d(s-s^{\prime})}\right]\dd s^{\prime} \,.
\end{equation}

For the second integral in \eqref{eq:split}, we additionally de-singularize
with \eqref{appa:iloc} to obtain
\begin{equation}\label{eqn:self_E_int_1}
\int_\Gamma\frac{\dd s^{\prime}}{\sqrt{\eta^2+\left[d(s-s^{\prime})\right]^2}}
=\int_\Gamma\left[\frac{1}{\sqrt{\eta^2+ \left[d(s-s^{\prime})\right]^2}}
  -\frac{1}{\sqrt{\eta^2+(s-s^{\prime})^2}}\right]\dd s^{\prime}
+\int_{s-L/2}^{s+L/2}\frac{\dd
  s^{\prime}}{\sqrt{\eta^2+(s-s^{\prime})^2}} \,.
\end{equation}
Using the periodicity of the curve, we find that as $\eta\to0^{+}$, the first
integral in \eqref{eqn:self_E_int_1} tends to
\begin{align*}
  \int_{s-L/2}^{s+L/2}\left[\frac{1}{d(s-s^{\prime})}-\frac{1}{|s-s^{\prime}|}
    \right]\dd s^{\prime}
  &=2\int_{s}^{s+L/2}\left[\frac{\pi/L}{\sin\big(\pi(s^{\prime}-s)/L\big)}
    -\frac{1}{s^{\prime}-s}\right]\dd s^{\prime} \\
& =2\int_0^{\pi/2}\left[\frac{1}{\sin\theta}-\frac1\theta\right]\dd\theta
=2\Big[\log\tan\left(\frac\theta2\right) -\log\theta\Big]_{0}^{\pi/2}
=2\log\left(\frac4\pi\right) \,,
\end{align*}
where we used the substitution $\theta=\pi(s^{\prime}-s)/L$. For the
second integral in \eqref{eqn:self_E_int_1}, we have that
\begin{equation}\label{eqn:self_E_int_2}
\int_{s-L/2}^{s+L/2}\frac{\dd
  s^{\prime}}{\sqrt{\eta^2+(s-s^{\prime})^2}}=2\,
\mathrm{arcsinh}\left(\frac{L}{2\eta}\right)=2\log\left(\frac{L}{\eta}\right)
+\mathcal O\Big(\frac{\eta^2}{L^2}\Big)\, \quad \mbox{as} \quad \eta\to0^{+} \,.
\end{equation}

Finally, by combining equations \eqref{eq:I}-\eqref{eqn:self_E_int_2} in
\eqref{eq:split}, we obtain the final result
\begin{equation}\label{eq:E}
  \mathcal E(s)=2\log\left(\frac{4L}{\pi}\right) +
  \int_\Gamma\left[\frac{1}{|\X(s)-\X(s^{\prime})|}-
    \frac{\pi/L}{\big|\sin\big(\pi(s-s^{\prime})/L\big)\big|}\right]\dd
  s^{\prime} \,.
\end{equation}
In the case where the curve is a circle of radius $\rho$, we have that
$|\X(s)-\X(s^{\prime})|=d(s-s^{\prime})$ so the integral term in
\eqref{eq:E} vanishes and we recover $\mathcal E_0=2\log(8\rho)$ as
derived in \eqref{appa:loop_res}. The reduced formulation in
\eqref{eq:E} features an integral that is well behaved and is amenable
to standard integration techniques.

\section{Optimization of the radius of an equatorial filament}\label{appr:optim}

In this appendix we prove Proposition \ref{prop:opt_ring} 
establishing that there is a unique value $r_{0min}$ of the centerline
radius $r_0$ that minimizes the global MFRT for a concentric
equatorial filament in the unit sphere. We also approximate $r_{0min}$
for $\eps\ll 1$.

To do so, we first seek properties of the integral $J(r_0)$ in
\eqref{sec3:I0J0}. Although it appears intractable to evaluate
$J(r_0)$ in closed form, we establish in the next result that
$J(r_0)$ is monotone decreasing on $0<r_0<1$. 

\begin{lemma}\label{lemma:J} On $0< r_0< 1$, we have
\begin{equation}\label{sec3:jp}
    J^{\prime}(r_0) = \frac{4\pi}{r_0} \left( 1 - \frac{2}{\pi}
    K\left(r_0^2\right)\right) <0 \,,
\end{equation}
since $K(r_0^2)>{\pi/2}$ on $0<r_0<1$.  Moreover, $J(0)=2\pi
\log{2}\approx 4.3552$ and $J(1)=8G_{c}-2\pi\log{2}\approx 2.9726$,
where $G_c$ is Catalan's constant. 
\end{lemma}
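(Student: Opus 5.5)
The plan is to differentiate $J$ under the integral sign with respect to $t\equiv r_0^2$, and to reduce the integrand of $dJ/dt$ algebraically to a combination of $1$ and $1/S$, where $S\equiv (t^2+1-2t\cos\psi)^{1/2}$. The $1/S$ part is then exactly the integral $I(r_0)=4K(r_0^2)$ already evaluated in \eqref{sec3:I01}. Writing $c=\cos\psi$, we have $J=\int_0^{2\pi}\log(1-tc+S)\,\dd\psi$ with $\partial_t S=(t-c)/S$. Differentiating is legitimate for $0<t<1$ because $S\ge 1-t>0$ and $1-tc+S>0$ there, so the integrand is smooth in $(t,\psi)$. This gives
\begin{equation*}
\frac{dJ}{dt}=\int_0^{2\pi}\frac{t-c-cS}{S\,(1-tc+S)}\,\dd\psi .
\end{equation*}

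The key step is the identity
\begin{equation*}
\frac{t-c-cS}{S(1-tc+S)}=\frac{S-1}{tS},
\end{equation*}
which I would verify by cross-multiplying and using $S^2=t^2+1-2tc$. Indeed $(S-1)(1-tc+S)=S^2-1+tc-tcS=t^2-tc-tcS=t(t-c-cS)$. Hence
\begin{equation*}
\frac{dJ}{dt}=\frac1t\int_0^{2\pi}\Big(1-\frac1S\Big)\dd\psi=\frac1t\big(2\pi-I(r_0)\big)=\frac{1}{t}\big(2\pi-4K(t)\big),
\end{equation*}
by \eqref{sec3:I0J0} and \eqref{sec3:I01}. Multiplying by $dt/dr_0=2r_0$ yields $J'(r_0)=\frac{4\pi}{r_0}\big(1-\frac{2}{\pi}K(r_0^2)\big)$. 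For the sign, $K(m)=\int_0^{\pi/2}(1-m^2\sin^2\phi)^{-1/2}\dd\phi>\pi/2$ for $0<m<1$, since the integrand exceeds $1$ on $(0,\pi/2]$. Hence $J'<0$.

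For the endpoint values, $J(0)=\int_0^{2\pi}\log 2\,\dd\psi=2\pi\log2$ directly. At $t=1$ we have $S=2\sin(\psi/2)$ and $1-\cos\psi=2\sin^2(\psi/2)$, so the argument of the logarithm factors as $2\sin(\psi/2)\,(1+\sin(\psi/2))$. The singularity at $\psi=0$ is only logarithmic and hence integrable. Justifying the limit $J(r_0)\to J(1)$ as $r_0\to1^-$ by dominated or monotone convergence is the only mildly delicate point, and I would handle it by bounding $|\log(1-tc+S)|$ by $|\log(1-c)|+C$ uniformly in $t$.

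With the substitution $\theta=\psi/2$, the value $J(1)$ splits into three pieces:
\begin{equation*}
J(1)=2\pi\log2+2\int_0^{\pi}\log\sin\theta\,\dd\theta+2\int_0^\pi\log(1+\sin\theta)\,\dd\theta .
\end{equation*}
The second term is $-2\pi\log2$ (classical). For the third I would use symmetry about $\theta=\pi/2$ together with the known value $\int_0^{\pi/2}\log(1+\sin x)\,\dd x=-\frac{\pi}{2}\log2+2G_c$, which gives $8G_c-2\pi\log2$. Summing the three pieces gives $J(1)=8G_c-2\pi\log2$.

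The main obstacle is spotting the algebraic simplification of $dJ/dt$; everything after it is routine. The Catalan-constant integral is standard: it follows from $1+\sin x=2\cos^2(\pi/4-x/2)$ and $\int_0^{\pi/4}\log\cos u\,\dd u=-\frac{\pi}{4}\log2+\frac{G_c}{2}$, and can simply be cited.
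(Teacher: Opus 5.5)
Your proposal is correct and is essentially the paper's proof: you differentiate under the integral, reduce the logarithmic derivative to $\frac{1}{t}\bigl(1-1/S\bigr)$, use $I(r_0)=4K(r_0^2)$, and evaluate $J(1)$ through the same factorization $2\sin(\psi/2)\bigl(1+\sin(\psi/2)\bigr)$ into three classical integrals; the only difference is that you verify the key identity by cross-multiplying in $t=r_0^2$, whereas the paper multiplies by the conjugate $1-r_0^2\cos\psi-S$. Your extra rigor (differentiation under the integral sign, and dominated convergence as $r_0\to1^-$, the latter not strictly needed since $J(1)$ is just the convergent integral at $r_0=1$) is fine, and your value $\int_0^{\pi/2}\log(1+\sin x)\,\dd x=2G_c-\frac{\pi}{2}\log 2$ is correct; the paper's intermediate line $4\bigl(2G_c-\frac{\pi}{2}\bigr)$ drops a factor $\log 2$, although its final $J(1)$ agrees with yours.
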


\begin{proof}
  The result for $J(0)$ is immediate. To find $J(1)$ we use
  $1-\cos\psi=2\sin^2\left({\psi/2}\right)$ to get
  \begin{align*}
    J(1) &=\int_{0}^{2\pi} \log\left[ 2\sin\left(\frac{\psi}{2}\right)
      \left( 1 + \sin\left(\frac{\psi}{2}\right) \right)\right] \dd\psi \,, \\
     & = 2\pi \log{2} + 2\int_{0}^{\pi} \log\left( \sin{x}\right) \dd x +
    2\int_{0}^{\pi} \log\left( 1 + \sin{x} \right) \, \dd x \,, \\
&= 2 \pi \log{2} + 2 \left(-\pi \log{2}\right) + 4\left(2 G_c -
    \frac{\pi}{2}\right) \,,\\
    &= 8 G_{c}-2\pi\log{2} \,,
  \end{align*}
where $G_c\equiv -\int_{0}^{\pi/4} \log\left[\tan(x)\right]\, \dd x
\approx 0.915966$ is Catalan's constant. Next, we label $g\equiv1-r^2\cos\psi + C$, where $C\equiv\left(r_0^4-
2r_0^2 \cos\psi +1\right)^{1/2}$. We calculate that
\begin{equation}\label{sec3:jp_0}
  J^{\prime}(r_0)=\int_{0}^{2\pi} \frac{\partial_{r_0}g}{g} \, \dd\psi
  \qquad \mbox{where} \qquad \frac{\partial_{r_0}g}{g} = \frac{2r_0}{C} \left(
  \frac{r_0^2-(1+C)\cos\psi} {1-r_0^2\cos\psi + C} \right) \,.
\end{equation}
Upon multiplying top and bottom by $1-r_0^2\cos\psi -C$, we calculate the
denominator $(1-r^2\cos\psi)^2-C^2$ to get
\begin{equation} \label{sec3:jp_1}
  \frac{\partial_{r_0}g}{g} = -\frac{2 r_0}{C} \frac{
    \left(r_0^2-(1+C)\cos\psi\right) \left( 1-C - r_0^2
    \cos\psi\right)}{r_0^4\sin^{2}\psi}\,.
\end{equation}
By collecting powers of $C$ in the numerator and by using $C^2=r_0^4-2r_0^2
\cos\psi+1$, we obtain after some algebra that \eqref{sec3:jp_1} reduces to
\begin{equation}\label{sec3:jp_2}
  \frac{\partial_{r_0}g}{g} = \frac{2}{r_0} - \frac{2}{r_0 C} \,.
\end{equation}
Finally, by substituting \eqref{sec3:jp_2} into 
\eqref{sec3:jp_0}, we evaluate $J^{\prime}(r_0)$ as
\begin{equation}\label{sec3:jp_f}
  J^{\prime}(r_0) = \int_{0}^{2\pi}\left( \frac{2}{r_0} - \frac{2}{r_0
    \left(r_0^4-2r_0^2\cos\psi +1\right)^{1/2}} \right)\dd\psi =
  \frac{4\pi}{r_0} - \frac{8 K(r_0^2)}{r_0} \,,
\end{equation}
which completes the proof of \eqref{sec3:jp}.
\end{proof}

Next, we use \eqref{sec3:jp} together with
\begin{equation*}
  \frac{d}{dr_0} K\left(r_0^2\right) = \frac{2}{r_0(1-r_0^4)} E(r_0^2)
  - \frac{2}{r_0} K\left(r_0^2\right)\,,
\end{equation*}
where $E(m)\equiv \int_{0}^{\pi/2}
\left(1-m^2\sin^2\theta\right)^{1/2}\, \dd\theta$ is the complete
elliptic integral of the second kind of modulus $m$. In this way, we
calculate from \eqref{sec3:ubar_2} in terms of $a_{\eps} \equiv {8/(\eps
  d_c)}$ that
\begin{equation}\label{sec3:fp}
  {\mathcal F}_{\eps}^{\prime}(r_0) = \frac{1}{r_0}
  {\mathcal C}_{\eps}(r_0) \quad \mbox{where} \quad {\mathcal C}_{\eps}(r_0)
  \equiv \frac{1-\log(a_{\eps} r_0)}{r_0} + \frac{4 E\left(r_0^2\right)}
         {1-r_0^4} + 2\pi (r_0^2 -1)\,.
\end{equation}

To show that ${\mathcal C}_{\eps}(r_0)$ has a unique root on
${\mathcal O}\left({1/a_{\eps}}\right)\ll r_0\ll 1-{\mathcal
  O}\left({1/a_{\eps}}\right)$, we first observe that ${\mathcal
  C}_{\eps}(r_0)\to -\infty$ when $r_0\to 0$ but with ${\mathcal
  O}\left({1/a_{\eps}}\right)\ll r_0$ and that ${\mathcal
  C}_{\eps}(r_0)\to +\infty$ when $r_0\to 1^{-}$ but with $r_0\ll
1-{\mathcal O}\left({1/a_{\eps}}\right)$. By the intermediate value
theorem there must be a root $r_{0\min}$ to ${\mathcal
  C}_{\eps}(r_0)=0$ on this interval.  To show that this root is
unique we now establish that ${\mathcal C}_{\eps}(r_0)$ is monotone
increasing. To do so, we use the identity
\begin{equation*}
  \frac{d}{dr_0} E(r_0^2) = \frac{2}{r_0}\left[ E(r_0^2)- K(r_0^2)\right]\,,
\end{equation*}
to obtain after some algebra that
\begin{equation}\label{sec3:cprime}
  {\mathcal C}_{\eps}^{\prime}(r_0) = 4\pi r_0 + \frac{\left(\log(a_{\eps} r_0)
    - 2\right)}{r_0^2} + \frac{8}{r_0\left(1-r_0^4\right)^2}
  \left[ r_0^4 E(r_0^2) + E(r_0^2) - (1-r_0^4)K(r_0^2)\right] \,.
\end{equation}
By using the well-known identity $E(m)-(1-m^2)K(m)>0$ on $0<m<1$ it
follows from \eqref{sec3:cprime} that ${\mathcal
  C}_{\eps}^{\prime}(r_0)>0$ on ${\mathcal
  O}\left({1/a_{\eps}}\right)\ll r_0\ll 1-{\mathcal
  O}\left({1/a_{\eps}}\right)$.

This unique root to ${\mathcal F}_{\eps}^{\prime}(r_0)=0$, given by the
zero of ${\mathcal C}_{\eps}(r_0)=0$ and labeled by $r_{0\min}$,
is a global minimum of ${\mathcal F}_{\eps}(r_0)$ since from
\eqref{sec3:fp} we have
\begin{equation*}\label{sec3:fpp}
  {\mathcal F}_{\eps}^{\prime\prime}(r_0)=-\frac{{\mathcal C}_{\eps}(r_0)}{r_0^2}
 +  \frac{{\mathcal C}_{\eps}^{\prime}(r_0)}{r_0} =
    \frac{{\mathcal C}_{\eps}^{\prime}(r_{0\min})}{r_{0\min}}  >0  
      \quad \mbox{when}   \quad r_0=r_{0\min} \,.
\end{equation*}      

Next, we approximate $r_{0\min}$ for $a_{\eps}\gg 1$ by using a
dominant balance argument. We readily observe from \eqref{sec3:jp}
that this root is close to the boundary and so we set $r_{0}=1-\delta$
for some $\delta\ll 1$. By using $1-r_0^4\sim 4\delta-6\delta^2$, we
obtain
\begin{equation*}
  -\frac{\log{a_{\eps}}}{1-\delta} + \frac{1+\delta}{1-\delta}
  +  \frac{E\left(1-2\delta+{\mathcal O}(\delta^2)\right)}
  {\delta-{3\delta^2/2}+{\mathcal O}(\delta^3)}
  -4\pi \delta +{\mathcal O}(\delta^2) =0 \,,
\end{equation*}
which yields that
\begin{equation}\label{sec3:root_1}
  -\log a_{\eps} + 1+\delta + \frac{(1-\delta)}{\delta \left(1-{3\delta/2}
    \right)} E(1-2\delta) - 4\pi \delta \approx 0\,.
\end{equation}
Then by using the well-known asymptotics $E(m)\sim
1-\tfrac{(1-m)}{2}\log(1-m) + (3\log{2}-1){(1-m)/2}$ as $m\to 1^{-}$,
we obtain that
\begin{equation}\label{sec3:root_2}
  -\log a_{\eps} + 1 + \frac{1}{\delta} + \log\left( \frac{4}{\delta}\right)
  -\frac{1}{2} + {\mathcal O}(\delta)=0\,,
\end{equation}
which has the approximate root
\begin{equation}\label{sec3:root}
  \delta \sim \frac{1}{\log a_{\eps}} +
  \frac{\left[\log(4 \log a_\eps) +{1/2}\right]}{(\log{a_{\eps}})^2}\,,
  \quad \mbox{where} \quad a_{\eps} = \frac{8}{\eps d_c} \,,
\end{equation}
which yields \eqref{prop:opt_r0}. This completes the derivation of Proposition \ref{prop:opt_ring}.

\section{Large $N$ asymptotics of ${\mathcal S}_N$}\label{appa:sn}

In this appendix we establish Lemma \ref{lemma:sn} for the discrete
sum ${\mathcal S}_N$.

Since ${\mathcal S}_{N}$ is unbounded as $N\to\infty$, we will extract
from the sum two terms with known divergent behavior. To do so we
define $g(x)$ by
\begin{equation}\label{ball:sn_d1}
    g(x)\equiv \frac{1}{\sin(\pi x)} - \frac{1}{\pi x} - \frac{1}{\pi(1-x)}\,,
\end{equation}
so that $g(x)$ is smooth on $0\le x\le 1$ with removable singularities
at $x=0$ and $x=1$. By adding and subtracting, and setting $x={j/N}$,
we then write ${\mathcal S}_{N}$ in \eqref{ball:sn} as
\begin{equation}\label{ball:sn_d2}
      {\mathcal  S}_{N} = \frac{1}{\pi} \left( N \sum_{j=1}^{N-1}
      \frac{1}{j} + \sum_{j=1}^{N-1} \frac{1}{1-{j/N}} \right) +
      \sum_{j=1}^{N-1} g\left({j/N}\right) =
       \frac{2 N}{\pi} H_{N-1} + \sum_{j=1}^{N-1} g\left({j/N}\right) \,,
\end{equation}
where $H_{N-1}$ is the $(N-1)$-th harmonic number with the large-$N$
asymptotics (see \cite{graham})
\begin{equation}\label{ball:hn}
   H_{N-1}\equiv \sum_{j=1}^{N-1} \frac{1}{j} \sim \log(N) + \gamma_e
    - \frac{1}{2N} - \frac{1}{12N^2} + {\mathcal O}\left({1/N^3}\right)
    \quad \mbox{as} \quad N\to \infty \,.
\end{equation}
Next, since $g(x)$ is smooth we apply the Euler-Maclaurin summation
expansion to obtain
\begin{equation}\label{ball:euler}
    \frac{1}{N} \sum_{j=1}^{N-1} g\left({j/N}\right) = \int_{0}^{1} g(x) \dd x -
    \frac{1}{2N} \left[ g(0)+g(1)\right] + \frac{1}{12 N^2} \left[
      g^{\prime}(1)-g^{\prime}(0)\right] + {\mathcal O}\left({1/N^3}\right)\,,
\end{equation}
as $N\to \infty$.  We readily calculate that
\begin{equation}\label{ball:gprop}
    \int_{0}^{1} g(x) \dd x = \frac{2}{\pi} \log\left( \frac{2}{\pi}\right)
    \,, \quad g(0)=g(1)=-\frac{1}{\pi} \,, \quad g^{\prime}(0)=-g^{\prime}(1)
    = \frac{\pi}{6} - \frac{1}{\pi} \,.
\end{equation}
In this way, we obtain from \eqref{ball:euler} and \eqref{ball:gprop} that
\begin{equation}\label{ball:gfinal}
    \sum_{j=1}^{N-1} g\left({j/N}\right) = \frac{2N}{\pi}
    \log\left(\frac{2}{\pi}\right) + \frac{1}{\pi} + \left( \frac{1}{6\pi}
    - \frac{\pi}{36} \right) \frac{1}{N} + {\mathcal O}\left({1/N^2}\right)
    \quad \mbox{as} \quad N\to \infty \,.
\end{equation}
Upon substituting \eqref{ball:gfinal} and \eqref{ball:hn} into
\eqref{ball:sn_d2} we obtain the limiting asymptotics
\eqref{ball:sn_asy} of Lemma \ref{lemma:sn} as we observe that the
${\mathcal O}(1)$ term vanishes, leaving only an ${\mathcal
  O}\left({1/N}\right)$ error estimate with the small
coefficient ${-\pi/36}$.

\section{Ring filament: the lowest eigenvalue of the Laplacian}\label{appc:eig}

For the unit sphere, and for $\lambda_0>0$ and
$j_l^{\prime}(\sqrt{\lambda_0})\ne 0$, so that $\lambda_0$ is not an
eigenvalue of the Laplacian for the filament free-problem in the
unit-sphere for radially symmetric eigenfunctions, the regular part of
the Helmholtz Green's function in \eqref{eig:Gdef2} can be obtained
from separation of variables as \bsub \label{appc:rh}
\begin{equation}\label{appc:rh_1}
\begin{aligned}
  R_h(\x;\bxi;\lambda_0) = \frac{1}{4\pi} \sum_{\ell=0}^{\infty}
  (2\ell + 1) \Big[ &-\sqrt{\lambda_0} \, j_{\ell}(\sqrt{\lambda}r_{<})
    y_{\ell}(\sqrt{\lambda} r_{>}) 
    +  \sqrt{\lambda_0} \frac{
      y_{\ell}^{\prime}(\sqrt{\lambda_0})}{j_{\ell}^{\prime}(\sqrt{\lambda_0})}
    j_{\ell}(\sqrt{\lambda_0} r) j_{\ell}(\sqrt{\lambda_0}r_0) \\
    & \qquad \qquad\qquad\qquad - \frac{r_{<}^{\ell}}{(2\ell+1)
      r_{>}^{\ell+1}} \Big]\,
  P_{\ell}(\cos\gamma)\,.
\end{aligned}
\end{equation}
Here $j_{\ell}(x)$ and $y_{\ell}(x)$ are spherical Bessel functions of the
first and second kind of order $\ell$, $P_{\ell}(x)$ is the Legendre polynomial
of degree $\ell$, and we  have labeled
\begin{equation}\label{appc:rh_2}
  r_0=|\bxi| \,, \quad r=|\x| \,, \quad r_{<}=\min(r,r_0) \,, \quad
  r_{>}=\max(r,r_0) \,,  \quad \cos\gamma  = \frac{ \x {\bf \cdot} \bxi}
  {|\x| |\bxi|}\,.
\end{equation}
\esub

For an equatorial ring of radius $r_0<1$, for which
$\X(s)=r_0\left(\cos({s/r_0}),\sin({s/r_0}),0\right)^{T}$, we use
\eqref{appc:rh} in \eqref{eig:inteq_ring}, after first converting to
angular variables, to obtain
\begin{equation}\label{appc:int_i}
  \begin{split}
  - \log\left(\frac{8r_0}{\eps d_c}\right)
  & = 2\pi r_0 \int_{0}^{2\pi} R_h(\X(r_0\theta);  \X(r_0\theta^{\prime});
  \lambda_0) \dd \theta^{\prime} \,, \\
  & = \sum_{\ell=0}^{\infty} \left[ \sqrt{\lambda_0} r_0 \left(\ell +\frac{1}{2}
    \right) H_{\ell}(\lambda_0,r_0) - \frac{1}{2} \right] \int_{0}^{2\pi}
  P_{\ell}(\cos\psi) \dd \psi \,,
  \end{split}
\end{equation}
where $H_{\ell}(\lambda_0,r_0)$ is given in \eqref{eig:nfunc_h}
upon replacing $l$ by $2k$. Finally, the integral in \eqref{appc:int_i}
can be evaluated using an identity in \cite{erdelyi}. If $\ell$ is odd,
$\int_{0}^{2\pi} P_{\ell}(\cos\psi)\, d\psi =0$, while if $l=2k$ and $k=0,1,2,
\ldots$, we have
\begin{equation}\label{appc:leg}
  \int_{0}^{2\pi} P_{2k}(\cos\psi) \dd \psi = 2\pi
  \left( \frac{ (2k)!}{ 2^{2k} (k!)^{2}} \right)^{2}\,.
\end{equation}
Finally, upon substituting \eqref{appc:leg} into
\eqref{appc:int_i} we conclude that $\lambda_0$ must be a root
of ${\mathcal N}(\lambda)$ as defined in \eqref{eig:nfunc}.

\section{Fragmented filaments: inner problem and numerics}\label{app:slab}

In this appendix we first derive the new inner problem
\eqref{sec6:new_inner} in Proposition \ref{mr:main} for a fragmented
filament.

Let $\X(s)$ be a smooth simple closed curve of length $L$
parameterized by arclength $s\in [0,L)$ with $\X(0)=\X(L)$, which we
  discretize into $N$ equal segments with spacing $\triangle s={L/N}\ll 1$.
  At each $s_j=(j-1)L /N$ for $j=1,\ldots,N$, corresponding to
  cartesian coordinates $\x_j=\X(s_j)$, we center a microtrap of a
  common shape $\Omega_{\sigma}$ and diameter ${\mathcal O}(\sigma)\ll
  1$ to generate a fragmented filament.  We assume that the collection
  of microtraps is disjoint.

  For any fixed $j$ with $1<j<N$ (WLOG), the microtrap centered at
  $s_j$ has two neighbours centered at arclength positions $s_{j-1}$
  and $s_{j+1}$. For this $j$, we label $s_{j-{1/2}}=s_j-{(\triangle
    s)/2}$ and $s_{j+1/2}=s_j+ {(\triangle s)/2}$. By using Taylor
  series, the chord distance $|\X(s_{j+{1/2}}) -\X(s_{j-1/2})|$ for
  $\triangle s\ll 1$ is
  \begin{equation*}
    |\X(s_{j+{1/2}})-\X(s_{j-1/2})| \sim \triangle s - \frac{\kappa_j
        (\triangle s)^3}{24} \,, \quad \mbox{for} \quad \triangle s \ll 1\,,
  \end{equation*} 
  where $\kappa_j\equiv |\X^{\prime\prime}(s_j)|$ is the curvature of $\X(s)$
  at $s=s_j$. Upon neglecting the ${\mathcal O}((\triangle s)^3)$
  error proportional to the curvature, it follows that the three points
  $\X(s_j)$, $\X(s_{j+{1/2}})$ and $\X(s_{j-1/2})$ are all asymptotically colinear
  for $\triangle s\ll 1$ and so the curve is locally straight.

As a result, we define the unit tangent ${\bf t}_{Z}$ by
  \begin{equation}\label{slab:tz}
    {\mathbf t}_{Z} = \frac{\X(s_{j+1/2})-\X(s_{j-1/2}) }{
              |\X(s_{j+1/2})-\X(s_{j-1/2})|} \,,
 \end{equation}
  and we label ${\mathcal P}_{j}$ to be the 2-D plane orthogonal to
  ${\mathbf t}_{Z}$ at $\X(s_j)$. We define two additional planes
  parallel to ${\mathcal P}_{j}$ containing $\X(s_{j+{1/2}})$ and
  $\X(s_{j-{1/2}})$ as ${\mathcal P}_{j+{1/2}}$ and ${\mathcal
    P}_{j-{1/2}}$, respectively, where $\dist( {\mathcal P}_{j-1/2},
  {\mathcal P}_{j+1/2}) = \triangle s={L/N}$. In order to account for
  the periodicity of the microtraps on the entire curve, we will
  impose that the local solution is periodic across these planes. To
  span the plane ${\mathcal P}_j$ through $\X(s_j)$, we introduce two
  mutually orthogonal unit vectors ${\mathbf t}_{X}$ and ${\mathbf t}_{Y}$.

Next, within an ${\mathcal O}(\sigma)$ neighbourhood of $\X(s_j)$, we
introduce the linear transformation
\begin{equation}\label{slab:coord}
  \x = {\mathcal L}(X,Y,Z) \equiv \X(s_j) + \sigma X {\mathbf t}_{X} +
  \sigma Y {\mathbf t}_{Y} + \sigma Z {\mathbf t}_{Z} \,.
\end{equation}
In terms of these local $(X,Y,Z)$ coordinates, the magnified microtrap
centered at $(X,Y,Z)=(0,0,0)$ is $\Omega_0$ and has diameter
${\mathcal O}(1)$. In addition, the images of the planes ${\mathcal
  P}_{j+{1/2}}$ and ${\mathcal P}_{j-{1/2}}$ correspond to $Z=a$ and
$Z=-a$ respectively, where $a\equiv {(\triangle
  s)/(2\sigma)}={L/(2N\sigma)}$.  Upon labeling the local solution by
$V(X,Y,Z)\equiv u\left( {\mathcal L}(X,Y,Z)\right)$, \eqref{sec6:MFPT}
transforms to $\Delta V=-{\sigma^2}/D$, so that to leading-order
$V\sim V_0$ where $V_0$ is harmonic in the slab geometry $|Z|\leq a$,
$-\infty<X,Y<\infty$ with $V_0$ having periodic boundary conditions on
$Z=\pm a$. For each $Z$, we impose that $V_0\sim A(s_j) \log\rho$ as
$\rho\equiv \sqrt{X^2+Y^2}\to\infty$, where $A(s_j)$ is the local
line density evaluated at $s=s_j$, which is determined by the outer
problem. In this way, by introducing $V_c$ by $V_0=A(s_j)V_{c}$, we
obtain that $V_{c}$ satisfies the new canonical inner problem
\eqref{sec6:new_inner} with unit logarithhmic growth for which
$V_c\sim \log\rho+\omega+o(1)$ as $\rho\to \infty$ for some {\em
  infinite slab capacitance} $\omega=\omega(a)$ to be determined.

In terms of the outer variable $\x$, we use \eqref{slab:coord} to estimate
for a fixed $Z$ that $\log|\x-\X(s_j)|= \log\rho + \log\sigma +
{\mathcal O}\left(Z^2/(2\rho^2)\right)$ as $\rho\to \infty$, so that
the asymptotic matching condition to the outer solution becomes
\begin{equation}\label{slab:V0ff}
  V_0 = A(s_j) V_c \sim A(s_j) \log|\x-\X(s_j)| + A(s_j)
  \left(\omega - \log\sigma \right) \,, \quad \mbox{as} \quad \x\to \X(s_j) \,.
\end{equation}
Upon equating the far-field behavior \eqref{slab:V0ff} with the
similar matching condition \eqref{sec2:inn_match} for a perfectly
absorbing filament of cross-sectional radius $\eps$, we simply enforce
${1/\nu}=\omega - \log\sigma$ where $\nu=-{1/\log[\eps d_c]}$ with
$d_c=1$. Solving this expression for $\eps$ gives
\eqref{sec6:eps-omega}. With this choice the outer problems for the
fragmented and unfragmented filaments are exactly the same, and
consequently have the same average MFRT to all logarithmic orders. To
treat fragmented filaments we need only replace $\eps=\sigma
e^{-\omega}$ in the integral equation \eqref{prop:inteq}.

\subsection{Large-$a$ asymptotics of the new inner problem}\label{slab:large-a}

Finally, although for general $a={\mathcal O}(1)$ we must compute
$\omega(a)$ numerically from the solution to \eqref{sec6:new_inner}
(see below) we now establish the asymptotics
\eqref{sec6:a_large} for $a\gg 1$. This range corresponds to the transition
region ${\mathcal O}(\sigma)\ll {L/N} \ll {\mathcal O}(1)$. 

For $a\to\infty$, the boundaries of the slab tend to infinity, and so
\eqref{sec6:new_inner} can be solved asymptotically by inner-outer
matching. In the inner region near the microtrap $\Omega_0$ the
leading-order asymptotic solution is $V_c\sim B\left( 1 - w_c\right)$
for some constant $B$, where $w_c$ solves \eqref{ball:capac}. Using
the far-field behavior of $w_c$ in \eqref{ball:capac}, we obtain that
\begin{equation}\label{slab:ff_0}
  V_c \sim B - \frac{B C}{d} \,, \quad \mbox{as} \quad
  d \equiv \sqrt{X^2+Y^2+Z^2}\to \infty \,,
\end{equation}
where $C$ is the capacitance of $\Omega_0$. For the outer region, for
$a\gg 1$ we introduce the stretched coordinate $\x=(x,y,z)^T={(X,Y,Z)^T/a}$
so that the effect of the microtrap is a point singularity
(here $\x$ is a local variable distinct from the $\x$ in the
outer region). After matching to \eqref{slab:ff_0}, the outer solution,
labeled by $v(\x)$ is
\begin{equation}\label{slab:vout}
  v(\x) \sim - 4\pi B \frac{C}{a} g(\x) + B\,,
\end{equation}
where $g(\x)$ is the unique slab Green's function satisfying
\begin{equation}\label{slab:gfunc}
\begin{split}
  \Delta g & =0\,, \quad \left\vert z\right\vert <1\,, \quad
  -\infty<x,y<\infty \,, \\
  g(x,y,1)&=g(x,y,-1)\,, \quad g_{z}(x,y,1)=g_{z}(x,y,-1)\,,\\
  g(\x) &= \frac{1}{4\pi |\x|} + o(1) \,, \quad \mbox{as} \quad |\x|\to 0\,.
 \end{split}
\end{equation}
The uniqueness follows from the $o(1)$ condition as $|\x|\to 0$.

Defining $r\equiv (x^2+y^2)^{1/2}$, the solution to \eqref{slab:gfunc} has
the form $g=g(r,z)$. The far-field asymptotics of $g(r,z)$,
established below in Lemma \ref{lem:gslab}, is
\begin{equation}\label{slab:gfar}
g(r,z)\sim -\frac{1}{4\pi }\log r+\frac{1}{4\pi }\left( \log \left( 4\right)
-\gamma _{e}\right) \,, \quad \mbox{as} \quad r=\sqrt{x^{2}+y^{2}} \to
\infty \,,
\end{equation}
for each $z$ in $|z|\leq 1$.  Substituting \eqref{slab:gfar} into
\eqref{slab:vout}, and replacing $r={\rho/a}$, we have the following
far-field behavior for any fixed $Z={z/a}$:
\begin{equation}\label{slab:V_ff}
  V_c \sim \frac{BC}{a} \log\rho + \frac{BC}{a} \left(-\log(4a) +
  \gamma_e\right) + B \,, \quad \mbox{as} \quad \rho=\sqrt{X^2+Y^2}\to\infty
  \,.
\end{equation}
Upon comparing \eqref{slab:V_ff} with the target far-field behavior
in \eqref{sec6:new_inner} we must have $B={a/C}$ and
$\omega\sim -\log(4a)+\gamma_e + {a/C}$ as $a\to \infty$. This confirms
\eqref{sec6:a_large} and completes the derivation of Proposition
\ref{mr:main}.

Our final step is to derive \eqref{slab:gfar} for the slab Green's
function satisfying \eqref{slab:gfunc}. This Green's function provides
the canonical connection between 2-D and 3-D. 

\begin{lemma}
  \label{lem:gslab} For $r=\sqrt{x^2+y^2}\to \infty$, the asymptotics
  \eqref{slab:gfar} for the solution to \eqref{slab:gfunc} holds for
  any fixed $|z|\le 1$.
\end{lemma}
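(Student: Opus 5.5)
The plan is to build $g$ explicitly by the method of images and then read off its far field with a summation formula. Periodicity of period $2$ in $z$, together with the conditions in \eqref{slab:gfunc} on $z=\pm1$, means we need the $z$-periodic free-space Green's function. The naive image sum $\sum_n 1/|\x-2n\mathbf{e}_z|$ diverges logarithmically, so I will renormalize it by subtracting the constants $1/(2|n|)$ and set
\begin{equation*}
  g(\x)=\frac{1}{4\pi}\left[\frac{1}{|\x|}+S(\x)\right]\,, \qquad S(\x)\equiv\sum_{n\neq0}\left(\frac{1}{|\x-2n\mathbf{e}_z|}-\frac{1}{2|n|}\right)\,.
\end{equation*}

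The first step is to check that this is the solution of \eqref{slab:gfunc}. Pairing the terms $\pm n$, the summand is $\mathcal{O}\big((r^2+z^2)/n^3\big)$ locally uniformly, so $S$ converges and is harmonic away from the images. The set of images is invariant under $z\mapsto z+2$, and the subtracted constants cancel under this shift after reindexing (a telescoping check), so $S$ is $2$-periodic in $z$. The term $\pm n$ pairing makes $g$ even in $z$, and evenness plus $2$-periodicity gives the matching of $g$ and $g_z$ at $z=\pm1$. Finally $S(\mathbf{0})=\sum_{n\neq0}\big(1/(2|n|)-1/(2|n|)\big)=0$, which is exactly the $o(1)$ normalization. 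Uniqueness is already noted after \eqref{slab:gfunc}, so this representation is the solution.

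The second step removes the $z$-dependence of the far field. I expand $g$ in a Fourier series in $z$ on $|z|\le1$. The zero mode $\bar g(r)=\tfrac12\int_{-1}^{1}g\,\dd z$ satisfies the 2-D problem $\Delta_2\bar g=-\tfrac12\delta$, so $\bar g=-\tfrac{1}{4\pi}\log r+c$ for some constant $c$. Each mode $\cos(k\pi z)$ with $k\ge1$ solves a modified Helmholtz equation in $r$ and is a multiple of $K_0(k\pi r)$, hence exponentially small as $r\to\infty$. It therefore suffices to find $c$, and I will do this at $z=0$.

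The third step, which I expect to be the main technical point, evaluates the constant. At $z=0$,
\begin{equation*}
  4\pi g(r,0)=\frac1r+2\sum_{n\ge1}\left(\frac{1}{\sqrt{r^2+4n^2}}-\frac{1}{2n}\right)=\frac1r+\sum_{n\ge1}\left(\frac{1}{\sqrt{n^2+t^2}}-\frac1n\right)\,, \qquad t\equiv\frac r2\,.
\end{equation*}
I will cut the sum at $N\gg t$. Euler--Maclaurin applied to the smooth function $(n^2+t^2)^{-1/2}$ gives $\sum_{n=1}^{N}(n^2+t^2)^{-1/2}=\operatorname{arcsinh}(N/t)-\tfrac{1}{2t}+\mathcal{O}(1/t^2)$. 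Combined with $H_N=\log N+\gamma_e+o(1)$ from \eqref{ball:hn}, this yields $\sum_{n\ge1}\big(\ldots\big)=\log(2/t)-\gamma_e-\tfrac{1}{2t}+o(1)$. The $-\tfrac{1}{2t}=-\tfrac1r$ term cancels the $1/r$, leaving
\begin{equation*}
  4\pi g(r,0)\sim\log 4-\log r-\gamma_e\,,
\end{equation*}
which is \eqref{slab:gfar} at $z=0$. The delicate part is the uniformity of the Euler--Maclaurin remainder in $t$. If that becomes awkward, I will instead apply Poisson summation to $\sum_{n\in\mathbb{Z}}(n^2+t^2)^{-1/2}$, regularized by the same subtraction. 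This gives $-2\log t+\text{const}$ plus $4\sum_{k\ge1}K_0(2\pi kt)$, whose remainder is exponentially small, and the constant $\log4-\gamma_e$ can be identified through the same harmonic-number limit. Combining with the second step, the nonzero Fourier modes vanish as $r\to\infty$, so \eqref{slab:gfar} holds for every fixed $|z|\le1$.
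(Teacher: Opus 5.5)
Your proposal is correct and is essentially the paper's argument. You use the same renormalized image sum \eqref{gimages} and fix the constant at $z=0$ by comparing $\sum_{n}(r^2+4n^2)^{-1/2}$ with an $\operatorname{arcsinh}$ integral and subtracting $H_N=\log N+\gamma_e+o(1)$. Your additions only supply rigor the paper skips: the Euler--Maclaurin remainder is uniformly $\mathcal{O}(t^{-2})$ by scaling (and its $-1/(2t)$ term cancels the $1/r$), which replaces the paper's Riemann-sum limit, and your Fourier-mode argument replaces the paper's remark that $|z|<1\ll r$. To discard the $I_0(k\pi r)$ branch in that argument, either note that the image sum grows only like $\log r$, or integrate \eqref{gimages} against $\cos(k\pi z)$ to get $g_k=\frac{1}{2\pi}K_0(k\pi r)$ directly.
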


\begin{proof}
  Define ${\mathbf e}_z=(0,0,1)^{T}$. By using the method of images,
  $g$ can be represented as
\begin{equation}  \label{gimages}
  g(\x)=\frac{1}{4\pi }\frac{1}{|\x|}+
  \frac{1}{4\pi }\sum_{|k|>0}^{\infty}
  \left( \frac{1}{|\x-2 k {\mathbf e}_z|} -\frac{1}{2|k|} \right)\,.
\end{equation}

By substracting the constant $\frac{1}{2|k|}$ off of each image term,
the resulting function automatically satisfies the $o(1)$ condition at
the origin in \eqref{slab:gfunc}. Moreover this sum is well defined,
since $\tfrac{1}{|\x-2 k {\mathbf e}_z|} - \tfrac{1}{2|k|} = {\mathcal
  O}(k^{-2})$ as $k\to\infty$ for any fixed $\x$.

We now determine the far-field behaviour. Suppose first that $z=0$. We then
write \eqref{gimages} as
\begin{equation}\label{slab:gshit}
  4\pi g(\x)= \frac{1}{r} + 2 S\,,
\end{equation}
where $S$ is defined by
\begin{equation}\label{slab:sk}
  S = \lim_{K\to \infty} \left(\sum_{k=1}^{K}\frac{1}{\sqrt{r^2+ 4k^2}} -
      \sum_{k=1}^{K} \frac{1}{2k} \right) \,.
\end{equation}
By using
\begin{equation*}
\sum_{1}^{K}\frac{1}{k}=\log K+\gamma _{e}+{\mathcal O}(K^{-1}) \,,
\end{equation*}
we rewrite \eqref{slab:sk} as
\begin{equation}\label{slab:sk1}
  S = \lim_{K\to \infty} \left(\sum_{k=1}^{K}\frac{1}{\sqrt{r^2+ 4k^2}} -
      \frac{1}{2} \log K \right) - \frac{\gamma_e}{2}\,.
\end{equation}
For large $r$ on the range ${\mathcal O}(1)\ll r \ll {\mathcal O}(K)$,
we introduce $\triangle \xi={2/r}\ll 1$ and $\xi_k={2k/r}$,
to obtain 
\begin{equation}
  \sum_{k=1}^{K} \frac{1}{\sqrt{r^2+4k^2}} = \frac{1}{2} \sum_{k=1}^{K}
  \frac{\triangle \xi}{\sqrt{1+\xi_k^2}} \,.
\end{equation}
In terms of the right Riemann sum we identify that
\begin{equation}\label{slab:rr}
  \lim_{r\to\infty} \left(\frac{1}{2} \sum_{k=1}^{K}
  \frac{\triangle \xi}{\sqrt{1+\xi_k^2}} \right) = \frac{1}{2}
  \int_{0}^{2K/r} \frac{\dd \xi}{\sqrt{1+\xi^2}} =
  \frac{1}{2} \log\left(\xi+ \sqrt{1+\xi^2}\right) \Big{\vert}_{0}^{2K/r}
   \sim \frac{1}{2}\log\left(\frac{4K}{r}\right) + o(1) \,,
\end{equation}
when ${\mathcal O}(1)\ll r \ll {\mathcal O}(K)$.  Upon substituting
\eqref{slab:rr} into \eqref{slab:sk1} we obtain for $r\gg 1$ that
\begin{equation}\label{slab:sfinal}
  S \sim \lim_{K\to \infty} \left(\frac{1}{2}\log K + \log{2} - \frac{1}{2}
  \log{r} - \frac{1}{2} \log{K} + o(1) \right)- \frac{\gamma_e}{2} \sim
  -\frac{1}{2}\log{r} + \log{2} -\frac{\gamma_e}{2} \,.
\end{equation}
Finally, by using \eqref{slab:sfinal} in \eqref{slab:gshit}, we
obtain the limiting behavior for $g$ in \eqref{slab:gfar}.

We remark that since we are taking the limit of large $r$, this
derivation remains unaltered even if $z$ is non-zero since $\left\vert
z\right\vert <1\ll r$.
\end{proof}

\subsection{Numerics of $\omega(a)$.}\label{app:slab_num}

We used the MATLAB PDE Toolbox (quadratic finite elements) to solve (\ref{sec6:new_inner}).
For both the disk and the ball, axial symmetry then reduces  (\ref{sec6:new_inner}) to a
two-dimensional problem in the cylindrical coordinates $(\rho,Z)$, with
$\rho=\sqrt{X^2+Y^2}$, on the truncated half-strip $0<\rho<\rho_{\max}$,
$0<Z<a$, where we solve $\nabla\cdot(\rho\nabla V_c)=0$. For the disk,
$V_c=0$ on $0<\rho<1$, $Z=0$; for the ball, the quarter disk $\rho^2+Z^2<1$
is removed and $V_c=0$ on the arc $\rho^2+Z^2=1$. On the far boundary $\rho=\rho_{\max}$ we
impose the unit flux $\rho\,\partial_\rho V_c=1$ and
homogeneous Neumann conditions on the remaining boundaries. 
Note that the $Z$-average
$\bar V(\rho)\equiv a^{-1}\int_0^a V_c\,dZ$ satisfies
$(\rho\bar V')'=0$ exactly and therefore matches the far-field 
behaviour
$\bar V\sim \log\rho+\omega$. 
We therefore read off
$\omega = \bar V(\rho_{\max})-\log\rho_{\max}$. We used $\rho_{\max}=1+4a$.
 Halving the
mesh size changed $\omega$ by at most $2\times10^{-6}$ (ball) and
$6\times10^{-5}$ (disk).  Doubling $\rho_{\max}$ changed $\omega$ by less than $10^{-7}.$

Table 
~\ref{tab:omega} compares $\omega(a)$, computed numerically from (\ref{sec6:new_inner}) as described
above, with the large-$a$ asymptotics (\ref{sec6:a_large}).

\begin{table}[htbp]
\centering
\begin{tabular}{ccc|ccc}
 \multicolumn{3}{c|}{ball ($C=1$)} & \multicolumn{3}{c}{disk ($C=2/\pi$)}\\
 $a$ & $\omega(a)$ & $\omega_{\rm asym}$ & $a$ & $\omega(a)$ & $\omega_{\rm asym}$\\ \hline
 1.05 & 0.1376 & 0.1921 & 0.25 & 0.1171 & 0.9699\\
 1.10 & 0.1501 & 0.1956 & 0.50 & 0.2524 & 0.6695\\
 1.25 & 0.1913 & 0.2178 & 1.00 & 0.6059 & 0.7617\\
 1.50 & 0.2741 & 0.2855 & 1.50 & 1.0633 & 1.1417\\
 2.00 & 0.4950 & 0.4978 & 2.00 & 1.5930 & 1.6394\\
 3.00 & 1.0919 & 1.0923 & 3.00 & 2.7832 & 2.8047\\
 5.00 & 2.5815 & 2.5815 & 5.00 & 5.4275 & 5.4355\\
\end{tabular}
\caption{The infinite slab capacitance $\omega(a)$ of (\ref{sec6:a_large}), computed by
finite elements, compared with the large-$a$ asymptotics
$\omega_{\rm asym}=a/C-\log(4a)+\gamma_e$ from  (\ref{sec6:new_inner}), for a unit ball and a
flat unit disk. The numerical values are accurate to all digits shown.}
\label{tab:omega}
\end{table}

The agreement is excellent for
large $a$. The asymptotic formula converges very fast for a ball, with a
relative error of only $0.56\%$ at $a=2$. As expected, the convergence is
slower for a disk, with a relative error of $2.9\%$ at $a=2$.
\end{appendices}

\bibliographystyle{plain} \bibliography{references}
\end{document}